\theoremstyle{plain}
\newtheorem{theorem}{Theorem}[section]
\newtheorem{lemma}[theorem]{Lemma}
\newtheorem{proposition}[theorem]{Proposition}
\newtheorem{example}[theorem]{Example}
\newtheorem{corollary}[theorem]{Corollary}
\newtheorem{remark}[theorem]{Remark}
\newtheorem{definition}[theorem]{Definition}
\theoremstyle{definition}
\theoremstyle{remark}
\numberwithin{equation}{section}
\newenvironment{oss}{\begin{remark} \begin{rm}}{\end{rm} \end{remark}}
\newcommand{\lip}{\mathrm{Lip}}
\newcommand{\BV}{\mathrm{BV}}
\renewcommand{\phi}{\varphi}
\newcommand{\R}{{\mathbb{R}}}
\newcommand{\bbR}{{\mathbb{R}}}
\newcommand{\bbN}{{\mathbb{N}}}
\def \R {{\mathbb {R}}}
\newcommand{\bbH}{\mathbb{H}}
\newcommand{\bbS}{\mathbb{S}}
\newcommand{\average}{{\mathchoice {\kern1ex\vcenter{\hrule height.4pt
width 6pt
depth0pt} \kern-9.7pt} {\kern1ex\vcenter{\hrule height.4pt width 4.3pt
depth0pt}
\kern-7pt} {} {} }}
\newcommand{\HH}{\mathcal{H}}
\newcommand{\LL}{\mathcal{L}}
\newcommand{\ud}{\,\mathrm{d}}
\newcommand{\res}{\mathop{\hbox{\vrule height 7pt width .5pt depth 0pt
			\vrule height .5pt width 6pt depth 0pt}}\nolimits}
\newcommand{\tr}[1]{\langle{#1}\rangle}
\newcommand{\X}{{\bf X}}
\newcommand{\xvett}{\X}
\newcommand{\xva}{\xvett^\ast}
\DeclareMathOperator*{\argmin}{arg\,min}
\newcommand{\dive}{\mathrm{div}\,}
\newcommand{\atx}[1]{{#1}_{\tau,\xi}^\ast}
\newcommand{\ut}{\overline u_\tau^*}
\newcommand{\norm}[1]{\|{#1}\|}
\newcommand{\area}{\mathcal G}
\newcommand{\cc}{\mathscr M}
\newcommand{\G}{\mathscr G}
\newcommand{\bd}{\begin{definition}}
\newcommand{\ed}{\end{definition}}
\newcommand{\bt}{\begin{theorem}}
\newcommand{\et}{\end{theorem}}
\newcommand{\bp}{\begin{proposition}}
\newcommand{\ep}{\end{proposition}}
\newcommand{\br}{\begin{remark}}
\newcommand{\er}{\end{remark}}
\newcommand{\bl}{\begin{lemma}}
\newcommand{\el}{\end{lemma}}
\newcommand{\bc}{\begin{corollary}}
\newcommand{\ec}{\end{corollary}}
\newcommand{\beq}{\begin{equation}}
\newcommand{\eeq}{\end{equation}}
    \let\TeXchi\chi
\newbox\chibox
\chibox \hbox{\raise\dp0 \box 0 }
\def\chi{\copy\chibox}
\title[Lipschitz minimizers of functionals]{Lipschitz minimizers for a class of integral functionals under the bounded slope condition}
\author[S.\, Don]
{Sebastiano Don}
\address[Sebastiano Don]{Mathematisches Institut, Sidlerstrasse 12, 3012 Bern, Switzerland}
\email[]{sebastiano.don@math.unibe.ch}
\author[L.\,Lussardi]
{Luca Lussardi}
\address[Luca Lussardi]{Dipartimento di Scienze Matematiche ``G.L.\,Lagrange'', Politecnico di Torino, corso Duca degli Abruzzi 24, 10129 Torino, Italy}
\email[]{luca.lussardi@polito.it}
\author[A.~Pinamonti]
{Andrea Pinamonti}
\address[Andrea Pinamonti]{Dipartimento di Matematica, Universit\`a degli Studi di Trento, via Sommarive 14, 38123 Povo (Trento), Italy}
\email[]{andrea.pinamonti@unitn.it}
\author[G.~Treu]
{Giulia Treu}
\address[Giulia Treu]{Dipartimento di Matematica ``Tullio Levi Civita'', Universit\`a degli Studi di Padova, via Trieste 63, 35121 Padova, Italy}
\email[]{giulia.treu@unipd.it}
\subjclass[]{ 
	49J45, % Methods involving semicontinuity and convergence; relaxation
	26A45, % Functions of bounded variation, generalizations
	49Q05, % Minimal surfaces and optimization
	53C17. % Sub-Riemannian geometry
}
\keywords{Minimization problem, Relaxation, Bounded slope condition, Functions of bounded variation, Heisenberg groups.}
\thanks{
 S.D. was partially supported by the Academy of Finland (grant
	288501
	`\emph{Geometry of subRiemannian groups}' and by grant
	322898
	`\emph{Sub-Riemannian Geometry via Metric-geometry and Lie-group Theory}') and by the European Research Council
	(ERC Starting Grant 713998 GeoMeG `\emph{Geometry of Metric Groups}'). S.D. was also partially supported by the Swiss National Foundation grant 200020\_191978. A.P. is partially supported by the INdAM-GNAMPA 2020 project Convergenze variazionali per funzionali e operatori dipendenti da campi vettoriali.
}
\begin{document}

\baselineskip3.4ex

\vspace{0.5cm}
\begin{abstract}
We consider the functional $\int_\Omega g(\nabla u+\X^\ast)\,\ud\LL^{2n}$ where $g$ is convex and $\X^\ast(x,y)=2(-y,x)$ and we study the minimizers in $BV(\Omega)$ of the associated Dirichlet problem. We prove that, under the bounded slope condition on the boundary datum, and suitable conditions on $g$, there exists a unique minimizer which is also Lipschitz continuous.
\end{abstract}

\maketitle
\section{Introduction}
%The research stream about minimal surfaces type problems in the Heisenberg group, like isoperimetric problems, existence and regularity of $\mathbb H$-perimeters minimizing sets, has been very active in the past years, for which we refer to   \cite{CCMregH1,CCMregHn,DGNAdvMath,DGN5,DGNP09,HlaPauJDiffG,HurRitRos,LR,Monti,MontiRickly,R3,RR1,RR2,SCV,PSTV}. 
In the present paper we are interested in the study of the Lipschitz regularity of minimizers of a class of functionals starting from the regularity of the boundary datum without assuming neither ellipticity nor the growth conditions on the lagrangian: the literature on this subject is extremely rich, we address the interested reader to \cite{BousquetACV,BouCla,CellinaBSC,ClarkeSNS,FiaschiTreu,LM, MTpams, MTjde, MTccm} and references therein for an overview.
%In particular, Lipschitz regularity has been investigated and several examples of minimizers in $\mathbb H^1$ with at most Lipschitz regularity are known  (for instance see \cite{pauls2,CHY,R2}). The case of minimizers in Sobolev spaces is much more delicate: see \cite{CHY,CH2} for some of the known results.
Our analysis moves from a recent paper by Pinamonti et al.\,\cite{PSTV} where the area functional for the $t$-graph of a function $u\in W^{1,1}(\Omega)$ in the sub-Riemannian Heisenberg group $\mathbb{H}^n=\R^n_x \times \R^n_y \times \R_t$ is investigated (see also further references in \cite{PSTV} on the Heisenberg's literature). Precisely, if $\Omega \subset \mathbb{R}^{2n}$ is open with Lipschitz boundary and ${\bf X}^*(x,y)\coloneqq2(-y,x)\in \mathbb{R}^{2n}$ they consider the functional $\mathscr A\colon W^{1,1}(\Omega)\to \mathbb{R}$ defined by
\[
\mathscr A(u)=\int_\Omega |\nabla u+{\bf X}^*|\,\ud\LL^{2n}.
\]
It was shown in \cite{SCV} that because of the linear growth in the gradient variable, the natural variational setting for the functional $\mathscr A$ is $BV(\Omega)$, the space of functions of bounded variation in $\Omega$. More precisely, it has been proved that the $L^1-$relaxation of $\mathscr{A}$ is 
\[
\mathscr A(u)=\int_\Omega |\nabla u+{\bf X}^*|\,\ud\LL^{2n}+|D^su|(\Omega), \quad u\in BV(\Omega)
\]
where $|D^s u|$ denotes the total variation of the singular part of the distributional derivative of $u$.
In \cite{PSTV}, the authors investigate a suitable Dirichlet problem for $\mathscr A$. Precisely, they show that the problem
\[
\min\left\{\mathscr A(u) : u\in BV(\Omega),\,u_{|\partial \Omega}=\varphi\right\}
\]
has a unique solution which is also Lipschitz continuous if $\varphi\in L^1(\partial\Omega)$ satisfies the so-called {\it bounded slope condition} (see Section $4$ below for the definition).

In the present paper we are interested in the more general case of functionals of type
\[
\mathscr G(u)=\int_\Omega g(\nabla u+{\bf X}^*)\,\ud\LL^{2n}
\]
where $g:\mathbb{R}^{2n}\to\mathbb{R}$ is convex but not necessarily strictly convex. In particular, we want to study the Dirichlet problem associated to $\mathscr G$ assuming the bounded slope condition on the boundary datum., i.e.
\begin{equation}\label{problem}
\min\left\{\mathscr G(u) : u\in BV(\Omega),\,u_{|\partial \Omega}=\varphi\right\}
\end{equation}
where $\varphi\in L^1(\partial\Omega)$.
Notice that the existence of minimizers is in general not guaranteed even for smooth boundary data on smooth domains and $g(z)=|z|$, see \cite[Example 3.6]{SCV}. The problem of the uniqueness of minimizers has been considered in \cite{Beck}, where the authors study integral functionals with linear growth defined on vector-valued functions with uniformly elliptic lagrangian and they prove that every minimizer is unique up to additive constants. 
We divide the paper in two parts: in the first, which is also the longest one, we will treat the case in which $g$ has linear growth whereas in the second we will describe how to modify the proof in order to deal with the case in $g$ has superlinear growth. 

In the first case the proof follows the line of the one given in \cite{PSTV}. We first observe that the relaxed functional of $\mathscr{G}$ in the $L^1$-topology can be written as
\[
\mathcal G (u)=\int_\Omega g(\nabla u+\X^\ast)\,\ud\LL^{2n}+\int_\Omega g^\infty\left(\frac{dD^su}{d|D^su|}\right)d|D^su|+\int_{\partial\Omega}g^\infty((\varphi-u_{|\partial\Omega})\nu_\Omega)\,d\mathcal H^{2n-1}
\]
where $g^{\infty}:\mathbb{R}^{2n}\to [0,+\infty)$ denotes the recession function of $g$ and $\nu_{\Omega}$ is the unit outer normal to $\partial\Omega$. %Consequently a problem in $BV(\Omega)$, that is
%\begin{equation}\label{problem}
%\min\{\mathscr G(u) : u\in BV(\Omega),\,u_{|\partial \Omega}=\varphi\}.
%\end{equation}
Then we prove that $\mathcal G$ admits a minimizer in $BV(\Omega)$. In order to do so we need to impose some restrictions on $g$, in particular we assume that 
\[
g\left(\frac{\xi_1+\xi_2}{2}\right)=\frac{g(\xi_1)+g(\xi_2)}{2} \Longrightarrow \xi_1=\lambda\xi_2
\]
\[
\text{[$p\in \partial g(\xi_2)$ and $g^\infty(\xi_1)=\langle p,\xi_1\rangle] \Longrightarrow \xi_1=\lambda\xi_2$}.
\]
Notice that these two assumptions say that, in some sense, $g$ is non too far from a strictly convex function and this is the reason why we expect to get more regularity of the minimizers. Finally, we prove that under the bounded slope condition the minimizer is actually unique and Lipschitz regular.
The approach used to get Lipschitz regularity of minimizers of $\mathcal G$ is inspired by some classical and well known results in the Calculus of Variations (see \cite{Hilbert} and \cite{Haar} and also \cite{HartmanBSC,HartmanConvex,HartStamp,Stampacchia}). The main idea (see \cite[Chapter 1]{G}) is that the bounded slope condition assumed on the boundary data produces a control of the Lipschitz constant of the minimizer.  

 We point out that crucial points in our analysis are the validity of comparison principles between minimizers, the invariance of minimizers under translations of the domain, and the fact that the affine functions are the unique minimizers among all $BV$-functions with their boundary data, which has been one of the main difficulty in our investigation (see Proposition \ref{propmin2}). 
 
If $g$ growths more than linearly we prove that the functional $\mathscr G$ has a unique minimizer in $\varphi +W_0^{1,1}(\Omega)$ which is also Lipschitz continuous if $\varphi$ satisfies the bounded slope condition. In this case the proof is more standard and we only sketch it.\\

The paper is organized as follows. In Section \ref{recallBV} we recall some basic facts about functions of bounded variation and the notion of trace. In Section \ref{sec:linear} we introduce the functional in the linear growth case, its relaxation and we investigate the main submodularity-type property of $\mathcal G$. Section \ref{ultSec} is devoted to the proof of our main Theorem (Theorem\ \ref{mainteo-ex6.7} below) in the linear growth case. Finally, Section \ref{lastSec} contains some results about the superlinear growth case.

\section{Preliminaries}\label{recallBV}

\subsection{Functions of Bounded Variation and traces}

The aim of this section is to recall some basic properties of the space of functions of bounded variation; we refer to the monographs \cite{AFP,Giusti} for a more extensive account on the subject as well as for proofs of the results we are going to recall.

Let $\Omega$  be an open set in $\R^n$. We say that $u\in L^1(\Omega)$ has {\em bounded variation} in $\Omega$ if
\begin{equation}\label{ilsup}
\sup\left\{\int_{\Omega} u\ \dive \varphi\ dx\ |\ \varphi\in C^1_c(\Omega,\mathbb{R}^n), \norm{\varphi}_\infty\leq 1\right\}<+\infty;
\end{equation}
equivalently, $u$ has bounded variation if there exist a $\R^n$-valued Radon measure $Du\coloneqq(Du_1,\dots, Du_n)$ in $\Omega$ which represents the distributional derivatives of $u$, i.e.,
\[
\int_{\Omega} u\frac{\partial\varphi}{\partial x_i}\ud\LL^n=-\int_{\Omega} \varphi\  \ud D_i u\quad \forall \varphi\in C^1_c(\Omega),\ \forall i=1,\ldots, n.
\]
The space of functions with bounded variation in $\Omega$ is denoted by $\BV(\Omega)$. By definition, $W^{1,1}(\Omega)\subset \BV(\Omega)$ and $Du=\nabla u\,\LL^n$ for any $u\in W^{1,1}(\Omega)$.

We denote by $|Du|$ the total variation of the measure $Du$; $|Du|$ defines a finite measure on $\Omega$ and the supremum in \eqref{ilsup} coincides with $|Du|(\Omega)$.

It is well-known that $\BV(\Omega)$ is a Banach space when endowed with the norm
\begin{align}\label{normBV}
    \norm{u}_{\BV(\Omega)}\coloneqq\norm{u}_{L^1(\Omega)}+ |Du|(\Omega).
\end{align}
%Moreover (see ), if  $(u_h)_{h\in\bbN}\subset \BV(\Omega)$ is a bounded sequence in $\BV(\Omega)$, then there exists a subsequence $(u_{h_k})_{k\in\bbN}$ and $u\in \BV(\Omega)$ such that $(u_{h_k})\to u$ in $L^1(\Omega)$.
%\end{teo}

We say that $u\in L^1_{loc}(\Omega)$ has an {\em approximate limit} $z\in \bbR$ at $x\in\Omega$ if
\begin{equation}\label{defapplim}
\lim_{\rho\to 0^+}\fint_{B(x,\rho)} |u-z| \ud\LL^n=0.
\end{equation}
The set $S_u$ of points where $u$ has no approximate limit is called {\em approximate discontinuity set} of $u$; for any $x\in \Omega\setminus S_u$, we denote by $\tilde u(x)$ the unique $z$ for which \eqref{defapplim} holds. By the Lebesgue Theorem we have $\LL^n(S_u)=0$.

%$u$ is approximately differentiable at $x\in\Omega\setminus S_u$ if there exists $L\in \bbR^n$ such that
%   \[\lim_{\rho\to 0^+}\ave_{B(x,\rho)} \frac{|u(y)-\tilde u(x)-\left\langle L, y-x\right\rangle|}{\rho}\ \ud\LL^n=0
%\]
%We define\nota{Sistemare definizione}
%$$D_u:=\{x\in\Omega\ |\ \lim_{\rho\to 0^+}\ave_{B(x,\rho)}\frac{|u(y)-\tilde u(x)-\left\langle L, y-x\right\rangle|}{\rho}\ \ud\LL^n\neq %0\}.$$ For every $x\in \Omega\setminus (S_u\cup D_u)$, $L$ is uniquely determined and we call it the approximate differential of $u$ at $x$ %and denoted by $\nabla u(x)$.

\noindent Moreover, we say that $u$ has an {\em approximate jump point} at $x\in\Omega$ if there exist $\nu\in \bbS^{n-1}$ and $a,b\in \bbR,\ a\neq b$  such that
\[
    \lim_{\rho\to 0^+}\fint_{B(x,\rho;\nu)^+}|u-a| \ud\LL^n=0,\quad \lim_{\rho\to 0^+}\fint_{B(x,\rho;\nu)^-}|u-b| \ud\LL^n=0
\]
where
\[
\begin{split}
    &B(x,\rho;\nu)^+\coloneqq\{y\in B(x,\rho)\ |\ \tr{ y-x,\nu}>0\}\\
    &B(x,\rho;\nu)^-\coloneqq\{y\in B(x,\rho)\ |\ \left\langle y-x,\nu\right\rangle<0\}.
\end{split}
\]
We observe that the triple $(a,b,\nu)$ is uniquely determined up to a permutation of $(a,b)$ and a change of sign of $\nu$; we denote it by $(u^+(x),u^-(x), \nu_u(x))$. The set of approximate jump points of $u$ is denoted by $J_u$; clearly, $J_u\subset S_u$.

\begin{remark}{
Depending on the context, we will sometimes use the symbols $u^+,u^-$ also to denote the positive part $u^+\coloneqq\max\{0,u\}$ and the negative part $u^-\coloneqq\max\{0,-u\}$ of a real function $u$. This will not generate confusion.
}\end{remark}

When $u$ has bounded variation in $\Omega$, the set of approximate jump points $J_u$ enjoys much finer regularity properties. First, there holds
\begin{equation}\label{negligSuJu}
|Du|(S_u\setminus J_u)=\HH^{n-1}(S_u\setminus J_u)=0\,,
\end{equation}
where $\HH^{n-1}$ denotes the $(n-1)$-dimensional {\em Hausdorff measure} on $\R^n$ (see e.g. \cite{AFP} or \cite{Giusti}).
Moreover, by the Federer-Vol'pert Thoerem, see \cite[Theorem 3.78]{AFP}, $J_u$ (and, consequently, $S_u$) is $(n-1)$-rectifiable, i.e., $\HH^{n-1}(J_u)<\infty$ and there exist $N\subset\R^n$ and a countable family of hypersurfaces $\{S_j:j\in\bbN\}$ of class $C^1$ such that
\[
J_u\subset N\cup\bigcup_{j=0}^\infty S_j\quad\text{and}\quad \HH^{n-1}(N)=0\,.
\]
 It turns out that $\nu_u$ corresponds ($\HH^{n-1}$-a.e.\ and up to a sign) to a unit normal to $J_u$, i.e., for $\HH^{n-1}$-a.e. $x\in J_u$, there holds
\[
\nu_u(x)=\pm\nu_{S_i}(x) \text{ if }x\in S_i\setminus \bigcup_{j=0}^{i-1} S_j,\quad\forall i\in\bbN\,.
\]

By the Radon-Nikodym Theorem, if $u\in \BV(\Omega)$ one can write $Du=D^au+D^su$, where $D^au$ is the absolutely continuous part of $Du$ with respect to $\LL^n$ and $D^su$ is the singular part of $Du$ with respect to $\LL^n$. We denote by $\nabla u\in L^1(\Omega)$ the density of $D^au$ with respect to $\LL^n$, so that $D^au=\nabla u\,\LL^n$. We are now in a position to state the following result:

\begin{theorem}\label{tuttoBV}
Let $u\in \BV(\Omega)$; then $u$ is {\em approximately differentiable} at a.e. $x\in\Omega$ with approximate differential $\nabla u(x)$, i.e.,
\[
    \lim_{\rho\to 0^+}\fint_{B(x,\rho)} \frac{|u(y)-\tilde u(x)-\left\langle \nabla u(x), y-x\right\rangle|}{\rho}\ \ud\LL^n=0\qquad\text{for $\LL^n$-a.e. }x\in\Omega\,.
\]
Moreover, the  decomposition $D^su=D^ju+D^cu$ holds, where
\[
D^ju\coloneqq D^su\res J_u=(u^+-u^-)\nu_u\HH^{n-1}\res J_u,\qquad D^cu\coloneqq D^su\res (\Omega\setminus S_u)
\]
are called respectively the {\em jump part} and the {\em Cantor part} of the derivative $Du$.
\end{theorem}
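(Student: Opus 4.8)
The plan is to treat the two assertions separately, both by differentiation/blow-up arguments, taking as given the Radon--Nikodym splitting $Du = \nabla u\,\LL^n + D^su$ recorded just above and the structural facts \eqref{negligSuJu} and Federer--Vol'pert already recalled.

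For the approximate differentiability, I would fix the linearisation $v(y) \coloneqq u(y) - \langle \nabla u(x), y-x\rangle$, which lies in $\BV$ with $Dv = (\nabla u - \nabla u(x))\LL^n + D^su$, and apply the Poincaré inequality in $\BV$ (see \cite{AFP,Giusti}): $\int_{B(x,\rho)}|v - v_{B(x,\rho)}|\,\ud\LL^n \le C\rho\,|Dv|(B(x,\rho))$. Dividing by $\rho^{n+1}$ reduces the statement to showing that
\[
\frac{|Dv|(B(x,\rho))}{\rho^{n}} = \frac{1}{\rho^{n}}\int_{B(x,\rho)}|\nabla u(y)-\nabla u(x)|\,\ud\LL^n(y) + \frac{|D^su|(B(x,\rho))}{\rho^{n}} \longrightarrow 0 .
\]
The first term is comparable to $\fint_{B(x,\rho)}|\nabla u - \nabla u(x)|\,\ud\LL^n$ and vanishes in the limit at every Lebesgue point of $\nabla u \in L^1(\Omega)$, hence $\LL^n$-a.e.; the second vanishes at $\LL^n$-a.e.\ $x$ by the Besicovitch derivation theorem, since $|D^su| \perp \LL^n$. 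Together with $\tilde u(x) = \lim_{\rho}v_{B(x,\rho)}$ (Lebesgue differentiation), this gives the approximate differential $\nabla u(x)$ at $\LL^n$-a.e.\ $x$.

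For the decomposition of $D^su$, I would just set $D^ju \coloneqq D^su\res J_u$ and $D^cu \coloneqq D^su\res(\Omega\setminus S_u)$ and verify that no mass is lost. Splitting $\Omega = J_u \cup (S_u\setminus J_u)\cup(\Omega\setminus S_u)$ and invoking $|Du|(S_u\setminus J_u)=0$ from \eqref{negligSuJu}, the intermediate piece is $|D^su|$-negligible, so $D^su = D^su\res J_u + D^su\res(\Omega\setminus S_u)$. Since $\LL^n(S_u)=0$ forces $D^au\res J_u = 0$, we may freely replace $D^su\res J_u$ by $Du\res J_u$ inside the jump part.

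The genuine work, and the step I expect to be the main obstacle, is the explicit identity $D^ju = (u^+-u^-)\nu_u\,\HH^{n-1}\res J_u$. Here I would use the $(n-1)$-rectifiability of $J_u$ to localise, $\HH^{n-1}$-a.e., to a blow-up at a fixed jump point $x$: the rescalings $u(x+\rho\,\cdot\,)$ should converge in $L^1_{loc}$ to the two-valued model equal to $u^+(x)$ on $\{y\cdot\nu_u(x)>0\}$ and to $u^-(x)$ on $\{y\cdot\nu_u(x)<0\}$, whose distributional gradient is exactly $(u^+(x)-u^-(x))\nu_u(x)\,\HH^{n-1}\res\{y\cdot\nu_u(x)=0\}$. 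Passing this to the limit identifies the density of $|D^su|\res J_u$ with respect to $\HH^{n-1}\res J_u$ as $|u^+-u^-|$ and its polar direction as $\nu_u$. The delicate points are the control of the $L^1_{loc}$ convergence of the blow-ups and the uniformity of the $\HH^{n-1}$-a.e.\ identification needed to integrate the local models over the rectifiable set $J_u$; this is precisely where I would rely on the machinery of \cite[Section 3]{AFP}.
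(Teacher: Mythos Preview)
The paper does not prove Theorem~\ref{tuttoBV}: it is stated in the preliminaries as a classical fact, with the reader referred to \cite{AFP,Giusti} for proofs. Your sketch is essentially the standard argument one finds in those references (Calder\'on--Zygmund/Poincar\'e for the first assertion, Federer--Vol'pert and rectifiable blow-up for the jump formula), so there is no ``paper's proof'' to compare against.

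One point in your write-up is too quick. From the Poincar\'e inequality you correctly obtain
\[
\frac{1}{\rho}\fint_{B(x,\rho)}|v - v_{B(x,\rho)}|\,\ud\LL^n \le C\,\frac{|Dv|(B(x,\rho))}{\rho^{n}}\longrightarrow 0,
\]
but the quantity you actually need is $\frac{1}{\rho}\fint_{B(x,\rho)}|v-\tilde u(x)|\,\ud\LL^n$, and the missing piece is $\frac{1}{\rho}|v_{B(x,\rho)}-\tilde u(x)|$. Lebesgue differentiation only gives $v_{B(x,\rho)}\to\tilde u(x)$, not the $o(\rho)$ rate. The fix is the usual dyadic telescoping (Campanato-type) estimate: from $|v_{B(x,\rho)}-v_{B(x,\rho/2)}|\le C\rho\,|Dv|(B(x,\rho))/\rho^{n}$ one sums over scales $\rho\,2^{-k}$ to get $|v_{B(x,\rho)}-\tilde u(x)|\le C\rho\,\sup_{r\le\rho}|Dv|(B(x,r))/r^{n}$, which is $o(\rho)$ under your density hypothesis. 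With this addition your argument for the first assertion is complete; the second assertion is handled correctly, and your plan for the jump identity is exactly the blow-up route of \cite[Theorem~3.77--3.78]{AFP}.
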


Notice that $D^au,D^cu,D^ju$ are mutually singular; in particular
\[
|D^au|=|\nabla u|\LL^n,\quad|D^ju|=|u^+-u^-|\HH^{n-1}\res J_u
\]
and
\[
|Du|=|D^au|+|D^cu|+|D^ju|
\]
because the total variation of a sum of mutually singular measures is the sum of their total variations.

%We are now in position to recall the classical Theorem of Federer-Vol'pert, see Theorem $3.78$ in \cite{ambfuspal} for the proof.
%
%\begin{teo}\label{rapjumpart}
%For any $u\in \BV(\Omega)$ the set $S_u$ is $\HH^{n-1}-$rectifiable and $\HH^{n-1}(S_u\setminus J_u)=0$. Moreover,
%\begin{align}
%   Du\res J_u=(u^{+}-u^-)\nu_u \HH^{n-1}\res J_u
%\end{align}
%and
%\begin{align}
%   &Tan^{n-1}(J_u,x)=\nu_u(x)^{\bot},\\
%   &Tan^{n-1}(|Du|\res J_u,x)=|u^+(x)-u^-(x)|\HH^{n-1}\res \nu_u(x)^{\bot}
%\end{align}
%where $Tan^{n-1}(S,x)$ denotes the approximate tangent space of $S$ at $x$, see Definition $2.79$ in \cite{ambfuspal}.
%\end{teo}
%\vspace{15pt}

%\subsection{Traces of \BV functions}
\medskip
In what follows we recall a few basic facts about boundary trace properties of BV functions; we refer again to \cite{AFP} and \cite{Giusti} for more details.

Let $\Omega\subset\R^n$ be a fixed open set with bounded Lipschitz regular boundary; the spaces $L^p(\partial\Omega), p\in[1,+\infty]$, will be always understood with respect to the (finite) measure $\HH^{n-1}\res\partial\Omega$. It is well-known that for any $u\in \BV(\Omega)$ there exists a (unique) function $u_{|\partial\Omega}\in L^1(\partial\Omega)$ such that, for $\HH^{n-1}$-a.e. $x\in \partial\Omega$,
\[
\lim_{\rho\to0^+}\rho^{-n} \int_{\Omega\cap B(x,\rho)}|u-u_{|\partial\Omega}(x)|\ud\LL^n=\lim_{\rho\to0^+}\fint_{\Omega\cap B(x,\rho)}|u-u_{|\partial\Omega}(x)|\ud\LL^n=0\,.
\]
The function $u_{|\partial\Omega}$
 %, which clearly can be defined as
%\begin{equation}\label{deftracce}
%u_{|\partial\Omega}(x) := \lim_{\rho\to0^+}\rho^{-n} \int_{\Omega\cap B(x,\rho)}u\ud\LL^n=\lim_{\rho\to0^+}\ave_{\Omega\cap B(x,\rho)}u\ud\LL^n
%\end{equation}
%for $\HH^{n-1}$-a.e. $x\in \partial\Omega$,
 is called {\em trace} of $u$ on $\partial\Omega$. The trace operator $u\mapsto u_{|\partial\Omega}$ is linear and continuous between $(\BV(\Omega),\norm\cdot_{\BV})$ and $L^1(\partial\Omega)$; actually, it is continuous also when $\BV(\Omega)$ is endowed with the (weaker) topology induced by the so-called {\em strict convergence}, see \cite[Definition 3.14]{AFP}.

\begin{remark}\label{traccesupinf}{\rm
It is well-known that, if $u_1,u_2\in \BV(\Omega)$, then $\overline u\coloneqq\max\{u_1,u_2\}$ and $\underline u\coloneqq\min\{u_1,u_2\}$ belong to $\BV(\Omega)$; moreover, one can show that
\[
\overline u_{|\partial\Omega}=\max\{u_{1|\partial\Omega},u_{2|\partial\Omega}\},\qquad \underline u_{|\partial\Omega}=\min\{u_{1|\partial\Omega},u_{2|\partial\Omega}\}\,.
\]
The proof of this fact follows in a standard way from the very definition of traces.
}\end{remark}

Since $Du\ll|Du|$ we can write $Du=\sigma_u|Du|$ for a $|Du|$-measurable function
\[
\sigma_u\colon\Omega\to\bbS^{n-1}.\medskip
\]
With this notation one also has
\begin{equation}\label{defvartr}
\int_\Omega u\,\dive\varphi\ud\LL^n = -\int_\Omega \langle \sigma_u,\varphi\rangle \ud |Du| + \int_{\partial\Omega} u_{|\partial\Omega}\, \langle\varphi,\nu_\Omega\rangle \ud\HH^{n-1},\qquad\forall \varphi\in C^1_c(\R^n;\R^n)
\end{equation}
where $\nu_\Omega$ is the unit outer normal to $\partial\Omega$.
%Actually, one could define $u_{|\partial\Omega}$ as the unique function in $L^1(\partial\Omega)$ such that \eqref{defvartr} holds.

Finally, we recall the following fact, whose proof essentially follows from \eqref{defvartr}.

\begin{proposition}[{\cite[Remark 2.13]{Giusti}}]\label{tracceatratti}
Assume that $\Omega$ and $\Omega_0$ are open subsets of $\R^n$ with bounded Lipschitz boundary and such that $\Omega\Subset\Omega_0$. If $u\in \BV(\Omega)$ and $v\in \BV(\Omega_0\setminus\overline\Omega)$, then the function
\[
f(x):=
\left\{\begin{array}{ll}
u(x) & \text{if }x\in\Omega\\
v(x) & \text{if }x\in\Omega_0\setminus\overline\Omega
\end{array}\right.
\]
belongs to $\BV(\Omega_0)$ and
\[
|Df|(\partial\Omega)= |D^jf|(\partial\Omega) = \int_{\partial\Omega}|u_{|\partial\Omega}-v_{|\partial\Omega}|\ud\HH^{n-1}\,,
\]
where we have used the notation $v_{|\partial\Omega}$ to mean $(v_{|\partial(\Omega_0\setminus\overline\Omega)})\res{\partial\Omega}$.
\end{proposition}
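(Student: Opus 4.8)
The plan is to reduce the statement entirely to the local integration-by-parts formula \eqref{defvartr} applied on the two disjoint pieces, and then to identify the boundary contributions coming from $\partial\Omega$.

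First I would verify that $f\in\BV(\Omega_0)$. Since $\LL^n(\partial\Omega)=0$ (because $\partial\Omega$ is Lipschitz, hence $\HH^{n-1}$-finite and $\LL^n$-negligible), the function $f$ is well-defined up to a null set and clearly lies in $L^1(\Omega_0)$. To bound its variation I would test against an arbitrary $\varphi\in C^1_c(\Omega_0;\R^n)$ with $\norm\varphi_\infty\le1$ and split the integral as $\int_{\Omega_0}f\,\dive\varphi\,\ud\LL^n=\int_\Omega u\,\dive\varphi\,\ud\LL^n+\int_{\Omega_0\setminus\overline\Omega}v\,\dive\varphi\,\ud\LL^n$. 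Applying \eqref{defvartr} to each piece — note that $\varphi$ need not be compactly supported inside $\Omega$ or inside $\Omega_0\setminus\overline\Omega$, which is exactly why the trace version \eqref{defvartr} rather than the distributional definition \eqref{ilsup} is the right tool — produces an interior term controlled by $|Du|(\Omega)+|Dv|(\Omega_0\setminus\overline\Omega)$, together with boundary terms on $\partial\Omega$. The crucial sign bookkeeping is that the unit outer normal of $\Omega$ along $\partial\Omega$ is the negative of the unit outer normal of $\Omega_0\setminus\overline\Omega$ along that same portion of the boundary, so the two boundary integrals combine into $\int_{\partial\Omega}(u_{|\partial\Omega}-v_{|\partial\Omega})\langle\varphi,\nu_\Omega\rangle\,\ud\HH^{n-1}$. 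This is manifestly bounded by $\int_{\partial\Omega}|u_{|\partial\Omega}-v_{|\partial\Omega}|\,\ud\HH^{n-1}$, giving $f\in\BV(\Omega_0)$.

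Next I would compute $|Df|(\partial\Omega)$ exactly. Since $\LL^n(\partial\Omega)=0$ and $Du,Dv$ are Radon measures on the respective open sets, neither $D^af$ nor the interior jump/Cantor parts charge $\partial\Omega$; the only contribution of $Df$ on $\partial\Omega$ comes from the mismatch of traces. From the combined identity above, $Df\res\partial\Omega=(u_{|\partial\Omega}-v_{|\partial\Omega})\nu_\Omega\,\HH^{n-1}\res\partial\Omega$, whence $|Df|(\partial\Omega)=\int_{\partial\Omega}|u_{|\partial\Omega}-v_{|\partial\Omega}|\,\ud\HH^{n-1}$. To conclude $|Df|(\partial\Omega)=|D^jf|(\partial\Omega)$ I would invoke the structure theorem (Theorem \ref{tuttoBV}): the rectifiable set $\partial\Omega$ carries no absolutely continuous part, and on the portion of $\partial\Omega$ where the two traces differ, $f$ has a genuine approximate jump with $f^+-f^-=\pm(u_{|\partial\Omega}-v_{|\partial\Omega})$ and jump normal $\nu_\Omega$, so this is a purely jump contribution; on the set where the traces agree there is no jump at all and the measure vanishes there as well.

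The main obstacle I anticipate is the careful verification that the trace of $f$ from inside $\Omega$ equals $u_{|\partial\Omega}$ and the trace from inside $\Omega_0\setminus\overline\Omega$ equals $v_{|\partial\Omega}$, so that $\partial\Omega\subset J_f$ precisely on the set $\{u_{|\partial\Omega}\ne v_{|\partial\Omega}\}$ with the asserted one-sided values. This is a local blow-up argument at $\HH^{n-1}$-a.e.\ boundary point, using the Lipschitz regularity of $\partial\Omega$ to flatten the boundary and the trace characterization recalled above; it is the step that genuinely uses that $f$ is built by gluing along a rectifiable hypersurface rather than along an arbitrary set. Once this identification is in place, the equality of the total-variation and jump-part measures on $\partial\Omega$ is immediate from the mutual singularity of $D^af$, $D^cf$, $D^jf$.
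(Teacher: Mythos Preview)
The paper does not actually prove this proposition: it cites \cite[Remark 2.13]{Giusti} and remarks only that the proof ``essentially follows from \eqref{defvartr}''. Your proposal does precisely that --- applying \eqref{defvartr} on the two pieces and combining the boundary contributions along $\partial\Omega$ --- and is correct, so there is nothing to compare.
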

For any $z=(x,y)\in\R^{2n}$, we define $z^\ast \coloneqq (-y,x)$. Let $\X^\ast\colon \mathbb R^{2n} \to \mathbb R^{2n}$ be given by $\X^\ast(z)\coloneqq2z^\ast$. We conclude this section with the next lemma which can be extracted from the proof of \cite[Thm.\,5.5]{PSTV}.
\begin{lemma}\label{0hom}
Let $R>0$ and $u\in BV(B_R(0))$ with $u=0$ on $\partial B_R(0)$. Assume that there exists a $|Du|$-measurable function $\lambda \colon B_R(0) \to \R$ such that
\[
\frac{dDu}{d|Du|}=\lambda\, \X^* \quad \textrm{$|Du|$-a.e.\,on $B_R(0)$}.
\]
Then $u=0$.
\end{lemma}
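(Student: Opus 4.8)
The plan is to exploit the single algebraic identity $\langle z,\X^\ast(z)\rangle=0$, which holds for every $z=(x,y)\in\R^{2n}$ since $\langle(x,y),2(-y,x)\rangle=2(-\langle x,y\rangle+\langle y,x\rangle)=0$. Because $\sigma_u:=dDu/d|Du|=\lambda\,\X^\ast$ by hypothesis, this gives $\langle z,\sigma_u(z)\rangle=\lambda(z)\langle z,\X^\ast(z)\rangle=0$ for $|Du|$-a.e.\ $z$, and hence the scalar $\R$-valued measure $\sum_{i}z_i\,D_iu=\langle z,\sigma_u\rangle\,|Du|$ vanishes identically on $B_R(0)$. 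Morally this says that the radial derivative of $u$ is zero, i.e.\ that $u$ is $0$-homogeneous (constant along rays), and the whole proof consists in turning this heuristic into a rigorous statement and combining it with the boundary condition.

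First I would use the vanishing trace to pass to the whole space. Extending $u$ by $0$ outside $B_R(0)$ and applying Proposition \ref{tracceatratti} (with $\Omega=B_R(0)$ and an auxiliary larger ball as $\Omega_0$), the hypothesis $u_{|\partial B_R(0)}=0$ guarantees that the extension $\bar u$ lies in $\BV(\R^{2n})$, is compactly supported in $\overline{B_R(0)}$, and carries no derivative mass on $\partial B_R(0)$, so that $D\bar u$ is simply $Du$ extended by zero. Consequently the identity of the previous step upgrades to $\sum_i z_i\,D_i\bar u=0$ as a finite measure on all of $\R^{2n}$. At this point the target becomes a pure rigidity statement: a compactly supported $\BV$ function whose radial derivative vanishes must be zero, because a genuinely $0$-homogeneous function with bounded support can only be the zero function.

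To make this rigorous I would mollify and integrate along the dilation flow $z\mapsto e^s z$. Setting $u_\e:=\rho_\e\ast\bar u\in\Ci_c(\R^{2n})$ and $h_\e(w):=\langle w,\nabla u_\e(w)\rangle$, the identity $\sum_i y_i\,dD_i\bar u(y)=0$ lets me subtract a vanishing term and write
\[
h_\e(w)=\sum_i\int(w_i-y_i)\,\rho_\e(w-y)\,dD_i\bar u(y),
\]
whence $\|h_\e\|_{L^1}\le\e\,|D\bar u|(\R^{2n})\to0$, since the kernels $\zeta\mapsto\zeta_i\rho_\e(\zeta)$ are supported in $B_\e(0)$. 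Because $u_\e$ has compact support, $f(s):=u_\e(e^s z)$ satisfies $f'(s)=h_\e(e^s z)$ and $f(+\infty)=0$, so $u_\e(z)=-\int_0^{\infty}h_\e(e^s z)\,ds$; Fubini together with the scaling $\int_{\R^{2n}}|h_\e(e^s z)|\,dz=e^{-2ns}\|h_\e\|_{L^1}$ then yields $\|u_\e\|_{L^1}\le(2n)^{-1}\|h_\e\|_{L^1}\to0$. Since $u_\e\to\bar u$ in $L^1$, I conclude $\bar u=0$, i.e.\ $u=0$. The main obstacle is precisely this last step: the $0$-homogeneity argument is immediate for smooth functions but has to be implemented at the level of the derivative measure, and the delicate point is the commutator estimate giving $\|h_\e\|_{L^1}\to0$ together with the dilation change of variables that converts it into a bound on $\|u_\e\|_{L^1}$.
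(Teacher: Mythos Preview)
Your argument is correct and self-contained. The key geometric input, namely $\langle z,\X^\ast(z)\rangle=0$, immediately gives that the scalar measure $\langle z,\sigma_u\rangle\,|Du|$ vanishes, and your mollification/commutator step $h_\e(w)=\int\langle w-y,\sigma_{\bar u}(y)\rangle\rho_\e(w-y)\,d|D\bar u|(y)$ together with the dilation change of variables $\int|h_\e(e^s z)|\,dz=e^{-2ns}\|h_\e\|_{L^1}$ turn this cleanly into $\|u_\e\|_{L^1}\le(2n)^{-1}\|h_\e\|_{L^1}\le(2n)^{-1}\e\,|D\bar u|(\R^{2n})\to0$. All the technical points (extension by zero with no mass on $\partial B_R(0)$, compact support of $u_\e$ and $h_\e$, Tonelli) are in order.

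As for the comparison: the paper does not prove Lemma~\ref{0hom} but simply records that it can be extracted from the proof of \cite[Thm.~5.5]{PSTV}. The core idea there is the same orthogonality $z\perp\X^\ast(z)$ used to conclude that $u$ is constant along rays and hence determined by its boundary trace; your presentation via mollification plus the explicit integral along the dilation flow $s\mapsto e^s z$ is a neat, elementary packaging of that idea, with the Friedrichs-type commutator estimate doing the work of passing from the measure identity to smooth functions. The approaches are therefore essentially the same in spirit; yours has the advantage of being fully written out and not relying on an external reference.
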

\section{The linear growth case}\label{sec:linear}

Throughout this section we assume that $g\colon\mathbb{R}^{2n}\to \mathbb{R}$ is a positive convex function with linear growth, namely
\begin{equation}\label{crescita}
\frac1C|z|\le g(z) \le C(1+|z|),
\end{equation}
for a constant $C\geq1$ and for any $z\in \mathbb{R}^{2n}$.
Moreover, defining the {\it recession function} of $g$ as the function $g^\infty\colon \R^{2n}\to [0,+\infty)$ given by
\begin{equation*}
g^\infty(p)\coloneqq\lim_{t\to+\infty}\frac{g(tp)}{t}.
\end{equation*}
Note that, since $g(0)<\infty$, our definition of $g^{\infty}$ coincides with the one given in \cite[Definition 2.32]{AFP}. As proved in \cite{AFP}, the recession function is positively homogeneous of degree $1$, convex and lower semicontinuous. In particular, $g^{\infty}$ satisfies the following inequalities
\begin{align}\label{sublin}
&g^{\infty}(p)\leq g^{\infty} (q)+g^{\infty}(p-q),\qquad \forall p,q\in \mathbb{R}^{2n},\\
\label{crescitainfty}
&\frac{1}{C}|p|\leq g^{\infty}(p)\leq C|p|,\qquad \forall p\in \mathbb{R}^{2n}.
\end{align}
Since by \cite[Proposition 2.32]{Da}, $g$ is Lipschitz continuous then denoting by $L_g$ its Lipschitz constant we get
	\[
	\left|g(tp)-g(tp+z)\right|\le L_g|z|
	\]
which implies that for any $z,p\in\R^{2n}$ we have
	\begin{equation}\label{remginf}
	g^\infty(p)=\lim_{t\to+\infty}\frac{g(tp+z)}{t}.
	\end{equation}
We consider the following conditions:
\begin{itemize}
	\item[\rm(A)] If $\xi_1,\xi_2\in \R^{2n}$ are such that
	\begin{equation} \label{sp}
	g\left(\frac{\xi_1+\xi_2}{2}\right)=\frac{g(\xi_1)+g(\xi_2)}{2},
	\end{equation}
	then there exists $\lambda\in \mathbb{R}$ such that $\xi_1=\lambda\xi_2$.
	\item[\rm(B)] If $\xi_1,\xi_2\in \R^{2n}$ and $p\in \partial g(\xi_2)$ are such that
	\begin{equation}\label{assB}
	g^\infty(\xi_1)=\langle p,\xi_1\rangle
	\end{equation}
	then there exists $\lambda\in \mathbb{R}$ such that $\xi_1=\lambda\xi_2$. Here $\partial g(q)$ denotes the subdifferential of $g$ at the point $q$.
\end{itemize}
\begin{example} Let $f\colon[0,+\infty)\to \mathbb{R}$ be a strictly convex and increasing function such that there exists  $C>1$ satisfying 
\[
\frac{1}{C}s\leq f(s)\leq C(s+1)
\]
for any $s\in [0,+\infty)$. Consider the function $g\colon\mathbb{R}^{2m}\to \mathbb{R}$ defined by $g(z):=f(|z|)$. We claim that $g$ satisfies conditions $(A)$ and $(B)$. Indeed, for any $\xi_1, \xi_2\in\mathbb{R}^{2m}$ satisfying \eqref{sp} we get
\begin{equation}
f\left(\frac{|\xi_1|}{2}+\frac{|\xi_2|}{2}\right)\leq \frac{1}{2}\left( f(|\xi_1|)+f(|\xi_2|)\right)=f\left(\frac{|\xi_1+\xi_2|}{2}\right)\leq f\left(\frac{|\xi_1|}{2}+\frac{|\xi_2|}{2}\right)
\end{equation}
from which we infer $|\xi_1+\xi_2|=|\xi_1|+|\xi_2|$ and the thesis follows. To prove condition $(B)$, we start observing that by \cite[Example 16.73]{BCconvex} we have
\begin{equation}
\partial g(\xi)=\begin{cases}
                  \left\{ \frac{\alpha}{|\xi|}\xi\ |\ \alpha\in \partial f(|\xi|) \right\}, & \mbox{if } \xi\neq 0 \\
                  B(0,\rho), & \mbox{if } \xi=0
                \end{cases}
\end{equation}
where $\rho\in [0,+\infty)$ is such that $\partial f(0)=[-\rho,\rho]$. Moreover a direct computation gives
\begin{equation}\label{infrt}
g^{\infty}(\xi)=f^{\infty}(|\xi|).
\end{equation}
Let us now consider $\xi_1,\xi_2\in\mathbb{R}^{2n}$ and $p\in \partial g(\xi_2)$ such that $g^{\infty}(\xi_1)=\langle p,\xi_1\rangle$. If $\xi_2=0$ then the strict convexity of $g$ gives that for any $p\in \partial g(0)$ the function $h(\xi):=g^{\infty}(\xi) - \langle p, \xi\rangle$ 
is strictly convex and nonnegative and $h(0)=0=h(\xi_1)$ and the conclusion follows.
 If $\xi_2\neq 0$ then $p=\alpha \frac{\xi_2}{|\xi_2|}$ for some $\alpha\in \partial f(|\xi_2|)$ and $\alpha>0$. By \eqref{infrt} and the fact that $f^{\infty}$ is $1-$homogeneous we get
\[
|\xi_1||\xi_2| f^{\infty}(1)=\alpha \langle \xi_1,\xi_2\rangle \leq \alpha |\xi_1||\xi_2|
\]
thence $\alpha\geq f^{\infty}(1)$. On the other hand, the convexity of $f$ gives
\[
\frac{f(s)}{s}\geq \frac{f(|\xi_2|)}{s}+\frac{\alpha(s-|\xi_2|)}{s}
\]
for every $s\in (0,+\infty)$, and letting $s\to +\infty$ we get $f^{\infty}(1)\geq \alpha$ which coupled with the previous inequality yields $f^{\infty}(1)=\alpha$ and conclusion follows.
\end{example}
\begin{example}
Let $a,b \in (0,+\infty)$. We claim that the function $g\colon\mathbb{R}^2\to [0,+\infty)$ defined by
\begin{equation}
g(z_1,z_2)=\sqrt{\frac{z_1^2}{a^2}+\frac{z_2^2}{b^2}}
\end{equation}
satisfies \eqref{crescita}, conditions $(A)$ and $(B)$. Indeed, for any $z\in \mathbb{R}^2$
\begin{equation}
\min\left\{\frac{1}{a},\frac{1}{b}\right\}|z|\leq g(z)\leq \max\left\{\frac{1}{a},\frac{1}{b}\right\}|z|
\end{equation}
and $g$ is convex and it satisfies $(A)$ by a direct computation. In order to prove condition $(B)$ we start observing that, being $g$ $1$-homogeneous and in $C^{\infty}(\mathbb{R}^2\setminus \{(0,0)\})$, we have $g^{\infty}(z)=g(z)$ for any $z\in \mathbb{R}^2$ and
$\partial g(z)=\left\{\left(\frac{z_1}{a^2g(z)}, \frac{z_2}{b^2g(z)}\right)\right\}$ for any $z\in \mathbb{R}^2\setminus\{(0,0)\}$. Let $\xi=(\xi_1,\xi_2)\in\mathbb{R}^2$, $(\eta_1,\eta_2)\in \mathbb{R}^2\setminus\{(0,0)\}$ and $(p_1,p_2)=\left(\frac{\eta_1}{a^2g(\eta)}, \frac{\eta_2}{b^2g(\eta)}\right)$ be such that $g^{\infty}(\xi)=\langle p,\xi\rangle$, namely
\begin{equation}
\sqrt{\frac{\xi_1^2}{a^2}+\frac{\xi_2^2}{b^2}}=\frac{\eta_1\xi_1}{a^2g(\eta)}+ \frac{\eta_2\xi_2}{b^2g(\eta)}
\end{equation}
which immediately implies that $\xi_1\eta_2=\eta_1\xi_2$ and the thesis follows. On the other hand, let $\xi=(\xi_1,\xi_2)$ and $\eta=(\eta_1,\eta_2)$ be such that
\begin{equation}
\eta \in\partial g((0,0))\qquad \mbox{and}\qquad g(\xi)=\langle \eta, \xi\rangle.
\end{equation}
Since the function $f(z)=g(z)-\langle p,z\rangle$ is convex, $1$-homogeneous, nonnegative and $f(\xi)=f((0,0))=0$, then one has $\xi=(0,0)$.
%Let us now consider the function $g\colon\mathbb{R}^2\to\mathbb{R}$ defined by
%\begin{equation}
%g(z)=\begin{cases}
%|z|+1 & \text{ if $|z|\leq 1$}\\
%2|z| & \text{ if $|z|> 1$}
%\end{cases}
%\end{equation}
%then $g$ is convex and  $|z|\leq g(z)\leq 3|z|+1$, for all $z\in \mathbb{R}^2$.
\end{example}
Let $\Omega\subset \R^{2n}$ be bounded, open and with Lipschitz boundary.
We consider the functional $\G_{\Omega} \colon W^{1,1}(\Omega)\to [0,+\infty]$ defined by
\begin{equation}\label{sobolev}
\G_{\Omega}(u)\coloneqq\int_\Omega g(\nabla u+\X^*)\,\ud\LL^{2n}
\end{equation}
where we recall that $\X^\ast(z)=2(-y,x)$, with $z=(x,y)$, $x,y\in \R^n$. 
%If $g(x)=|x|$, then $\G_{\Omega}$ has a very interesting geometric interpretation, indeed it represents the area of the $t$-graph of $u$ in the $n-th$ Heisenberg group $\mathbb H^n\equiv \R^n\times\R^n\times \R$, see \cite{Pauls, SCV, PSTV}.\\
In the following proposition, we underline some basic properties of the operator $z^\ast$, see \cite[Lemma 3.1]{PSTV} for a proof.
\begin{proposition}\label{starop}
	The following properties hold:
	\begin{enumerate}
		\item[(i)] if $z_1,z_2\in\bbR^{2n}$ are linearly dependent, then $z_1\cdot z_2^*=0$;
		\item[(ii)] $z_1\cdot z_2=z_1^*\cdot z_2^*$ for each $z_1,z_2\in\bbR^{2n}$;
		\item[(iii)] if $\Omega\subset\R^{2n}$ is open and $f\in C^{\infty}(\Omega)$, then $\dive(\nabla f)^*=0$ on $\Omega$.
	\end{enumerate}
\end{proposition}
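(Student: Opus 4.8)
The plan is to verify all three identities by a direct computation in coordinates. Throughout I write $z_i=(x_i,y_i)$ with $x_i,y_i\in\bbR^n$, so that by definition $z_i^\ast=(-y_i,x_i)$, and I record once the elementary formula
\[
z_1\cdot z_2^\ast=x_1\cdot(-y_2)+y_1\cdot x_2=y_1\cdot x_2-x_1\cdot y_2,
\]
which is the bilinear pairing underlying both (i) and (ii).

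For (i), I would read \emph{linearly dependent} as allowing one of the vectors to vanish. If $z_2=0$ then $z_2^\ast=0$ and the claim is immediate; otherwise $z_1=\lambda z_2$ for some $\lambda\in\bbR$, and substituting $x_1=\lambda x_2$, $y_1=\lambda y_2$ into the formula above gives $\lambda(y_2\cdot x_2-x_2\cdot y_2)=0$. For (ii), I would simply expand
\[
z_1^\ast\cdot z_2^\ast=(-y_1)\cdot(-y_2)+x_1\cdot x_2=x_1\cdot x_2+y_1\cdot y_2=z_1\cdot z_2 .
\]
For (iii), writing $\nabla f=(\nabla_x f,\nabla_y f)$ with $\nabla_x f=(\partial_{x_1}f,\dots,\partial_{x_n}f)$ and analogously for $\nabla_y f$, one has $(\nabla f)^\ast=(-\nabla_y f,\nabla_x f)$; taking the divergence with respect to the coordinates $(x,y)$ yields
\[
\dive(\nabla f)^\ast=\sum_{i=1}^n\partial_{x_i}\bigl(-\partial_{y_i}f\bigr)+\sum_{i=1}^n\partial_{y_i}\bigl(\partial_{x_i}f\bigr)=0,
\]
the cancellation being a consequence of the equality of mixed second partials, valid since $f\in C^\infty(\Omega)$.

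These computations are entirely elementary and there is no genuine obstacle; the proof is a matter of unwinding the definition of $z^\ast$. The only points demanding a modicum of care are the treatment of the degenerate case $z_2=0$ in (i), so that linear dependence is interpreted correctly, and fixing the sign and index conventions in (iii) so that the two double sums cancel rather than reinforce each other.
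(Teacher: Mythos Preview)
Your proof is correct and is exactly the elementary coordinate computation one expects; the paper does not give its own argument but simply refers to \cite[Lemma~3.1]{PSTV}, where the same direct verification appears. There is nothing to add.
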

%\begin{proof}
%	The first two statements are straightforward. To prove (iii), it is enough to observe that
	%$(\nabla f)^*=(-\partial_{n+1} f,\ldots, -\partial_{2n} f, \partial_1 f,\ldots, \partial_n f)$, and thus
	%\[
	%\dive(\nabla f)^*=-\sum_{i=1}^n \partial_{i}\partial_{n+i} f+\sum_{i=1}^n \partial_{n+i}\partial_i f = 0\,. \qedhere
	%\]
%\end{proof}

The following result, which generalizes \cite[Proposition 5.1]{PSTV}, states that if $\mathscr G_\Omega$ has a minimizer with some additional integrability, then it is unique.
\begin{proposition}\label{uniq-special}
	Let $p\in [1,2]$, let $\varphi \in W^{1,p'}(\Omega)$ and assume $g$ satisfies condition $(A)$. Let $u \in W^{1,p'}(\Omega)$ and $v \in W^{1,p}(\Omega)$ be two minimizers of
	\[
	\min\left\{\G_{\Omega}(u) : u \in \varphi+W^{1,p}_0(\Omega)\right\},
	\]
	then $u=v$ a.e.\,in $\Omega$.
\end{proposition}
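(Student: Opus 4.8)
The plan is to run the classical convexity argument for uniqueness and then to exploit the algebraic structure of the map $z\mapsto z^\ast$ to promote the resulting pointwise proportionality to genuine equality. Write $w\coloneqq u-v$; since both $u$ and $v$ lie in $\varphi+W^{1,p}_0(\Omega)$ we have $w\in W^{1,p}_0(\Omega)$, and I abbreviate $\xi_u\coloneqq\nabla u+\X^\ast$ and $\xi_v\coloneqq\nabla v+\X^\ast$, so that $\tfrac{\xi_u+\xi_v}2=\nabla\big(\tfrac{u+v}2\big)+\X^\ast$. First I would note that $\tfrac{u+v}2$ is admissible and invoke the convexity of $g$ together with the minimality of $u$ and $v$: denoting by $m$ the common minimal value, $\G_\Omega\big(\tfrac{u+v}2\big)\le\tfrac12\big(\G_\Omega(u)+\G_\Omega(v)\big)=m\le\G_\Omega\big(\tfrac{u+v}2\big)$, so every inequality is an equality. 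As the integrand inequality $g\big(\tfrac{\xi_u+\xi_v}2\big)\le\tfrac12\big(g(\xi_u)+g(\xi_v)\big)$ holds pointwise and the two integrals coincide, I conclude $g\big(\tfrac{\xi_u+\xi_v}2\big)=\tfrac12\big(g(\xi_u)+g(\xi_v)\big)$ for $\LL^{2n}$-a.e.\ $x\in\Omega$.

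At this point condition $(A)$ applies pointwise: there is a measurable function $\lambda\colon\Omega\to\R$ with $\xi_u=\lambda\,\xi_v$ a.e., equivalently $\nabla w=(\lambda-1)\,\xi_v$ a.e. In particular $\nabla w$ is proportional to $\xi_v=\nabla v+\X^\ast$ at almost every point, and hence, by Proposition \ref{starop}(i), orthogonal to $\xi_v^\ast$.

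Next I would convert proportionality into an integral identity. Expanding $\xi_u\cdot\xi_v^\ast=0$ by bilinearity and using Proposition \ref{starop} (namely $z\cdot z^\ast=0$, $z_1\cdot z_2=z_1^\ast\cdot z_2^\ast$, together with $(\X^\ast)^\ast=-2z$) yields the pointwise identity $\nabla u\cdot(\nabla v)^\ast=2\,z\cdot\nabla w$ a.e. Here the two integrability hypotheses enter decisively: since $\nabla u\in L^{p'}$ and $(\nabla v)^\ast\in L^{p}$ with $\tfrac1p+\tfrac1{p'}=1$, the product $\nabla u\cdot(\nabla v)^\ast$ lies in $L^1(\Omega)$ by H\"older's inequality, so the identity may be integrated over $\Omega$. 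Using $\nabla v\cdot(\nabla v)^\ast=0$ to rewrite the left-hand side as $\nabla w\cdot(\nabla v)^\ast$, then integrating by parts with $\operatorname{div}(\nabla v)^\ast=0$ (Proposition \ref{starop}(iii)), $\operatorname{div}\X^\ast=0$, and the vanishing trace $w_{|\partial\Omega}=0$, produces a scalar constraint on $w$ (concretely, that the average of $w$ over $\Omega$ vanishes).

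The main obstacle is the last implication: deducing $w\equiv0$ from $\nabla w=(\lambda-1)\xi_v$ together with the constraint just obtained. A single bilinear identity only controls an average of $w$, so the crux is to show that proportionality of the two fields $\xi_u$ and $\xi_v$—both gradients shifted by the \emph{same} divergence-free field $\X^\ast$—forces $(\lambda-1)\,\xi_v=0$, i.e.\ $\nabla w=0$; since $w\in W^{1,p}_0(\Omega)$ this gives $w=0$. I expect to reach $\nabla w=0$ by testing the proportionality against a sufficiently rich family of fields orthogonal to $\xi_v$ and with controlled divergence, again exploiting $\operatorname{div}(\nabla v)^\ast=0$ and $\operatorname{div}\X^\ast=0$, in the spirit of Lemma \ref{0hom} and of \cite[Proposition 5.1]{PSTV}. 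This passage—turning the algebraic proportionality into the rigidity $\nabla w=0$ at the level of Sobolev functions—is the delicate point, since $\lambda$ is a priori only measurable and every manipulation must be justified distributionally within the integrability furnished by $u\in W^{1,p'}$ and $v\in W^{1,p}$.
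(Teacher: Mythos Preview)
Your first step---the midpoint convexity argument leading to pointwise linear dependence of $\xi_u=\nabla u+\X^\ast$ and $\xi_v=\nabla v+\X^\ast$ via condition (A)---is exactly what the paper does. The paper then simply writes ``The conclusion now follows proceeding exactly as in the second part of \cite[Proposition 5.1]{PSTV}'' and gives no further details, so in that sense you have matched everything the paper actually proves.

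Where you go further than the paper, however, you leave a genuine gap. After correctly deriving the pointwise identity $\nabla u\cdot(\nabla v)^\ast=2z\cdot\nabla w$ and integrating (the integration by parts can indeed be justified using $u,\varphi\in W^{1,p'}$ and $v-\varphi\in W^{1,p}_0$), the most you obtain is the single scalar constraint $\int_\Omega w=0$. You yourself flag that this is insufficient and then offer only a heuristic: ``I expect to reach $\nabla w=0$ by testing the proportionality against a sufficiently rich family of fields orthogonal to $\xi_v$ and with controlled divergence.'' This is not an argument; no such family is produced, no mechanism explains why orthogonality to a single line at each point should force $\nabla w=0$, and the analogy with Lemma~\ref{0hom} is loose since there one has $\nabla w$ parallel to $\X^\ast$ itself, whereas here $\nabla w$ is parallel to $\nabla v+\X^\ast$, which depends on the unknown $v$. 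The actual rigidity argument in \cite[Proposition 5.1]{PSTV} is the substantive content you are missing, and your sketch does not reproduce its mechanism. As written, the proposal establishes linear dependence of $\xi_u,\xi_v$ but does not prove $u=v$.
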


\begin{proof}
	First of all we use a standard argument in order to prove that $\nabla u+\X^\ast$ and $\nabla v+\X^\ast$ are linearly dependent a.e.\,on $\Omega$. Using the convexity of $g$, we have
	\[
	g\left(\frac{\nabla u+\X^\ast}{2}+\frac{\nabla v+\X^\ast}{2}\right)\le \frac{g(\nabla u+\X^\ast)+g(\nabla v+\X^\ast)}{2} \quad \text{a.e.\,on $\Omega$}.
	\]
	Hence, from the minimality of $u$ and $v$ we get
	\[
	\G(u)\le \int_\Omega g\left(\frac{\nabla u+\X^\ast}{2}+\frac{\nabla v+\X^\ast}{2}\right) \ud\mathcal L^{2n}\le \frac{1}{2}\int_\Omega \left[g(\nabla u+\X^\ast)+g(\nabla v+\X^\ast)\right]\ud \mathcal L^{2n}=\G(u).
	\]
	Then
	\[
	g\left(\frac{\nabla u+\X^\ast}{2}+\frac{\nabla v+\X^\ast}{2}\right)= \frac{g(\nabla u+\X^\ast)+g(\nabla v+\X^\ast)}{2}, \quad \text{a.e.\,on $\Omega$}.
	\]
	Using (A) we deduce that $\nabla u+\X^\ast$ and $\nabla v+\X^\ast$ are linearly dependent a.e.\,on $\Omega$. The conclusion now follows proceeding exactly as in  the second part of \cite[Proposition 5.1]{PSTV}). \qedhere % Indeed, using (A) we deduce that $\nabla u+\X^\ast$ and $\nabla v+\X^\ast$ are linearly dependent a.e.\,on $\Omega$. By using Proposition \ref{starop} (i),
\end{proof}
\begin{remark}
	Notice that inequality \eqref{crescita} can be replaced by
	\begin{equation}\label{crescitaalt}
	\frac 1C|z|-C\leq g(z)\leq C(1+|z|),
	\end{equation}
	in which the map $g$ is not necessarily positive. This comes by the fact that, since we are studying minimizers, the function $g$ can be replaced by $g+M$, for any $M\in \R$.
\end{remark}
In order to prove the existence of a minimizer for $\G_{\Omega}$ we first compute its $L^1$ relaxed functional, namely
\begin{equation}
\mathcal{G}_{\Omega}(u):=\overline{\G_{\Omega}}(u)=\inf\left\{\liminf_h\G_{\Omega}(u_h) : u_h\in W^{1,1}(\Omega),\ u_h\to u\ \mbox{in}\ L^1(\Omega)\right\}.
\end{equation}
The following proposition provides an integral representation of $\mathcal{G}_{\Omega}$.
\begin{proposition}

Let $g$ be a convex function satisfying \eqref{crescita} and $u\in BV(\Omega)$. Then
\begin{equation}
\mathcal{G}_{\Omega}(u)=\int_{\Omega} g(\nabla u + \X^\ast)\,\ud\LL^{2n}+\int_\Omega g^\infty\left(\frac{dD^su}{d|D^su|}\right)\ud|D^su|.
\end{equation}
\end{proposition}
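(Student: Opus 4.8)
The statement is a relaxation result for a linear-growth integrand, and I would prove it by reducing to the classical theory of convex functions of measures. The first move is to absorb the drift $\X^\ast$ into an $x$-dependent integrand: setting $f(x,\xi):=g(\xi+\X^\ast(x))$, the functional is $\G_\Omega(u)=\int_\Omega f(x,\nabla u)\,\ud\LL^{2n}$, where $f$ is continuous, convex in $\xi$, of linear growth, and --- crucially --- by \eqref{remginf} its recession function $f^\infty(x,\xi)=\lim_{t\to\infty}g(t\xi+\X^\ast(x))/t=g^\infty(\xi)$ does not depend on $x$. Thus the claimed formula is exactly the standard relaxation of $\int_\Omega f(x,\nabla u)$; one could invoke a general relaxation theorem for such integrands, but I prefer to give the mechanism via Reshetnyak's theorems, which also makes the role of the Lipschitz bound \eqref{remginf} transparent.

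To that end I would lift everything to a positively $1$-homogeneous problem in one extra dimension. Define $\bar g\colon\R^{1+2n}\to[0,+\infty)$ by $\bar g(s,\xi):=s\,g(\xi/s)$ for $s>0$ and $\bar g(0,\xi):=g^\infty(\xi)$; this is convex, positively $1$-homogeneous, and, using that $g$ is Lipschitz (the estimate behind \eqref{remginf}), continuous on the unit sphere $\bbS^{2n}$, including at the equator $\{s=0\}$. To each $w\in\BV(\Omega)$ associate the $\R^{1+2n}$-valued measure $\nu_w:=\big(\LL^{2n},\,Dw+\X^\ast\LL^{2n}\big)$. A direct computation using $1$-homogeneity shows that
\[
\int_\Omega \bar g\!\left(\frac{d\nu_w}{d|\nu_w|}\right)\ud|\nu_w| = \int_\Omega g(\nabla w+\X^\ast)\,\ud\LL^{2n}+\int_\Omega g^\infty\!\left(\frac{dD^sw}{d|D^sw|}\right)\ud|D^sw|,
\]
since the first component $\LL^{2n}$ contributes density $1$ on the absolutely continuous part and vanishes on $D^sw$, forcing $\bar g$ to be evaluated at the recession direction $(0,\,dD^sw/d|D^sw|)$ on the singular part. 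In particular the right-hand side equals $\G_\Omega(w)$ when $w\in W^{1,1}(\Omega)$ (no singular part) and equals the target expression when $w=u$.

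For the lower bound, let $u_h\in W^{1,1}(\Omega)$ with $u_h\to u$ in $L^1(\Omega)$ and $\liminf_h\G_\Omega(u_h)<\infty$. Along a subsequence, \eqref{crescita} together with the boundedness of $\X^\ast$ on $\Omega$ gives a uniform bound on $\int_\Omega|\nabla u_h|\,\ud\LL^{2n}$, hence on $\norm{u_h}_{\BV(\Omega)}$; by closure of $\BV$ we get $Du_h\weakst Du$ and therefore $\nu_{u_h}\weakst\nu_u$. Since $\bar g$ is nonnegative, convex and $1$-homogeneous, the Reshetnyak lower semicontinuity theorem (see \cite[Theorem 2.38]{AFP}) yields $\int_\Omega\bar g(d\nu_u/d|\nu_u|)\,\ud|\nu_u|\le\liminf_h\int_\Omega\bar g(d\nu_{u_h}/d|\nu_{u_h}|)\,\ud|\nu_{u_h}|=\liminf_h\G_\Omega(u_h)$. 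Taking the infimum over all such sequences gives that $\mathcal{G}_\Omega(u)$ dominates the claimed expression.

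For the upper bound it suffices to produce one recovery sequence. I would choose smooth $u_h\in W^{1,1}(\Omega)$ converging to $u$ \emph{area-strictly}, i.e.\ with $u_h\to u$ in $L^1$ and $|\nu_{u_h}|(\Omega)=\int_\Omega\sqrt{1+|\nabla u_h+\X^\ast|^2}\,\ud\LL^{2n}\to|\nu_u|(\Omega)$; then $\nu_{u_h}\weakst\nu_u$ with convergence of total masses, and since $\bar g$ is continuous on $\bbS^{2n}$ the Reshetnyak continuity theorem (see \cite[Theorem 2.39]{AFP}) gives $\G_\Omega(u_h)=\int_\Omega\bar g(d\nu_{u_h}/d|\nu_{u_h}|)\,\ud|\nu_{u_h}|\to\int_\Omega\bar g(d\nu_u/d|\nu_u|)\,\ud|\nu_u|$, which is the target value, so $\mathcal{G}_\Omega(u)$ is bounded above by the claimed expression and the two inequalities conclude. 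I expect the main obstacle to lie precisely here: securing an approximating sequence that converges area-strictly (not merely strictly) up to $\partial\Omega$, so that the masses $|\nu_{u_h}|(\Omega)$ converge and the continuity theorem applies; the continuity of $\bar g$ at the equator $\{s=0\}$, which again rests on the Lipschitz estimate encoded in \eqref{remginf}, is the secondary point that must be checked with care.
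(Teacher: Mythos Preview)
Your argument is correct but takes a genuinely different route from the paper's. The paper dispatches the proposition by direct appeal to the Fonseca--M\"uller relaxation theorem \cite[Remark~2.17]{FonsecaMuller}: setting $f(x,\xi)\coloneqq g(\xi+\X^\ast(x))$, it verifies the structural hypotheses (H1)--(H5) of that reference---continuity, convexity in $\xi$, linear growth from \eqref{crescita}, a uniform modulus of continuity in $x$ coming from the Lipschitz bound on $g$, and the comparison $|f^\infty-f|\leq C(1+|\cdot|)$---and reads off the formula, noting via \eqref{remginf} that $f^\infty(x,\xi)=g^\infty(\xi)$ is $x$-independent. Your route through the perspective function $\bar g$ and the two Reshetnyak theorems is the mechanism that underlies such black-box results; it is more explicit about why the formula holds and is in fact the very technique the paper deploys in the \emph{next} proposition, where Reshetnyak continuity is invoked through \cite{Spec} and the area-strict approximation of \cite[Lemma~2.1]{BF}. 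The price you pay is having to secure the area-strict recovery sequence and the continuity of $\bar g$ across the equator $\{s=0\}$, both of which you correctly flag. For the former, the Anzellotti--Giaquinta approximation (precisely \cite[Lemma~2.1]{BF} as used by the paper) gives $u_h\in C^\infty(\Omega)\cap W^{1,1}(\Omega)$ with $\int_\Omega\sqrt{1+|\nabla u_h|^2}\to\int_\Omega\sqrt{1+|Du|^2}$; the passage to your $\X^\ast$-shifted version $|\nu_{u_h}|(\Omega)\to|\nu_u|(\Omega)$ is either a further one-line application of Reshetnyak continuity to the $x$-dependent norm $(s,\xi)\mapsto\sqrt{s^2+|\xi+s\X^\ast(x)|^2}$, or can be bypassed entirely by applying \cite[Theorem~2.39]{AFP} directly to the pair $(\LL^{2n},Du_h)$ and the $x$-dependent integrand $(x,s,\xi)\mapsto\bar g(s,\xi+s\X^\ast(x))$. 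For the latter, the monotone convergence $t^{-1}(g(t\xi)-g(0))\uparrow g^\infty(\xi)$ together with continuity of $g^\infty$ and Dini's theorem gives the needed uniformity on the half-sphere.
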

\begin{proof}
	By \cite[Remark 2.17]{FonsecaMuller}, it is enough to check (H1)-(H5) of the reference and observing that thanks to \eqref{remginf}, $g^{\infty}$ does not depend on $x$. Consider $f\colon\Omega\times \R^{2n}\to \R $ defined by $f(x,z)=g(z+X^*(x))$. Then $f$ is continuous and $f(x,\cdot)$ is convex. This yields (H1) and (H2).
	
	Assumption (H3) comes directly from \eqref{crescita} taking as $g(x,u)$ in (H3) the map $g(x,u)\equiv 1$. To prove (H4), we first recall that $g$ is Lipschitz with Lipschitz constant equal to $L_g$ and therefore
	\[
	|f(x,z)-f(x',z)|\leq L_g|x-x'|\leq L_g|x-x'|(1+|z|).
	\]
	In particular, if $x_0\in \Omega$ and $\delta>0$, then, whenever $|x-x_0|\leq \frac{\delta}{L_g}$ we get
	\[
	f(x,z)-f(x_0,z)\geq -L_g|x-x_0|(1+|z|)\geq -\delta(1+|z|),
	\]
	which completes the proof of (H4). Finally, (H5) comes from the fact that \eqref{crescita} implies
	\[
	|f^\infty(x,z)-f(x,z)|\leq C(|z|+1). \qedhere
	\]
	\end{proof}
Let $\Omega_0\subset \mathbb{R}^{2n}$ be an open Lipschitz domain with $\Omega\Subset \Omega_0$. Let $\varphi\in L^1(\partial\Omega)$ and $\Phi\in W^{1,1}(\Omega_0\setminus \Omega)$ such that $\Phi=\varphi$ on $\partial\Omega$ and $\Phi=0$ on $\partial \Omega_0$. We denote
\[
BV_{\Phi}(\Omega_0)\coloneqq \{u\in BV(\Omega_0):u=\Phi \ { \rm on } \  \Omega_0\setminus\overline \Omega\}
\]
by \cite[Theorem 1.3]{GMS} (see also \cite[Theorem $1.1$]{BF}) we get that $\argmin{\mathcal{G}_{\Omega_0}}\neq \emptyset$. Now observe that for any $u\in BV_{\Phi}(\Omega)$ we have
\begin{equation}\label{eas}
\begin{aligned}
\mathcal{G}_{\Omega_0}(u)&=\int_{\Omega} g(\nabla u + \X^\ast)\,\ud\LL^{2n}+\int_\Omega g^\infty\left(\frac{dD^su}{d|D^su|}\right)\ud|D^su|\\
&\qquad +\int_{\partial\Omega}g^\infty((\varphi-u_{|\partial\Omega})\nu_\Omega)\,\ud\mathcal H^{2n-1}+\mathcal{G}_{\Omega_0\setminus\Omega}(u_0)
\end{aligned}
\end{equation}
where $\nu_\Omega$ is the outer unit normal to $\Omega$ and $u_{|\partial\Omega}$ is the trace of $u$ on $\partial\Omega$.
Since the last term in the right-hand side of \eqref{eas} is constant we drop this term and we define the functional  $\mathcal{G}_{\varphi,\Omega} \colon BV(\Omega) \to \R$ by
\[
\mathcal{G}_{\varphi,\Omega}(u)\coloneqq\int_\Omega g(\nabla u+\X^\ast)\,\ud\LL^{2n}+\int_\Omega g^\infty\left(\frac{dD^su}{d|D^su|}\right)\ud|D^su|+\int_{\partial\Omega}g^\infty((\varphi-u_{|\partial\Omega})\nu_\Omega)\,\ud\mathcal H^{2n-1}
\]
whence for any $u\in BV_{\Phi}(\Omega)$,
\begin{equation}\label{xcv}
\mathcal{G}_{\Omega_0}(u)=\mathcal{G}_{\varphi,\Omega}(u_{|\Omega})+\mbox{constant}.
\end{equation}
Conversely, for any $u\in BV(\Omega)$ the extended function 
\[
u_0=\begin{cases}
u & \text{on $\Omega$},\\
\Phi & \text{on $\Omega_0\setminus \overline\Omega$} 
\end{cases}
\] 
 belongs to $BV_\Phi(\Omega_0)$ and 
\begin{equation*}
\mathcal{G}_{\Omega_0}(u_0)=\mathcal{G}_{\varphi,\Omega}(u)+\mbox{constant}.
\end{equation*}
We have then proved that, for any $\varphi\in L^1(\partial\Omega)$, the functional $\mathcal{G}_{\varphi,\Omega}$ admits a minimizers in $BV(\Omega)$.\\

The following result will be crucial later on, it relies on the approach developed in \cite{Giusti} for the area functional (see also \cite{BF}).
\begin{proposition} For any $\varphi\in L^1(\partial\Omega)$,
\begin{equation}
\min_{u\in BV(\Omega)} \mathcal{G}_{\varphi,\Omega}(u)=\inf\left\{\mathcal{G}_{\Omega}(u)\ :\ u\in W^{1,1}_0(\Omega)+\varphi\right\}.
\end{equation}
\end{proposition}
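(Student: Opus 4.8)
Throughout, let $\mathcal M:=\min_{u\in BV(\Omega)}\mathcal G_{\varphi,\Omega}(u)$, which is attained at some $\bar u$ by the discussion preceding the statement, and let $\mathcal I:=\inf\{\mathcal G_\Omega(u):u\in\varphi+W^{1,1}_0(\Omega)\}$; here $\varphi+W^{1,1}_0(\Omega)$ denotes the functions in $W^{1,1}(\Omega)$ whose trace on $\partial\Omega$ equals $\varphi$ (these exist since $L^1(\partial\Omega)$ is the trace space of $W^{1,1}(\Omega)$, and the value $\mathcal I$ is independent of the chosen extension because $\mathcal G_\Omega$ depends only on $\nabla u$). Set $B:=\int_{\partial\Omega}g^\infty((\varphi-\bar u_{|\partial\Omega})\nu_\Omega)\,\ud\HH^{2n-1}$. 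The inequality $\mathcal M\le\mathcal I$ is immediate: if $u\in\varphi+W^{1,1}_0(\Omega)$ then $D^su=0$ and $u_{|\partial\Omega}=\varphi$, so the singular and boundary integrals in $\mathcal G_{\varphi,\Omega}(u)$ vanish (recall $g^\infty(0)=0$) and $\mathcal G_{\varphi,\Omega}(u)=\int_\Omega g(\nabla u+\X^*)\,\ud\LL^{2n}=\mathcal G_\Omega(u)$; taking the infimum over such $u$ gives $\mathcal M\le\mathcal I$.

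For the reverse inequality it suffices to produce $u_k\in\varphi+W^{1,1}_0(\Omega)$ with $\limsup_k\mathcal G_\Omega(u_k)\le\mathcal M$. Following the approach of \cite{Giusti} for the area functional, I proceed in two steps. First, since $\mathcal G_\Omega$ is the $L^1$-relaxation of the Sobolev functional, I choose $v_k\in W^{1,1}(\Omega)$ converging to $\bar u$ strictly; by continuity of the trace for strict convergence one has $(v_k)_{|\partial\Omega}\to\bar u_{|\partial\Omega}$ in $L^1(\partial\Omega)$, and by the integral representation already established $\int_\Omega g(\nabla v_k+\X^*)\,\ud\LL^{2n}\to\int_\Omega g(\nabla\bar u+\X^*)\,\ud\LL^{2n}+\int_\Omega g^\infty\bigl(\tfrac{dD^s\bar u}{d|D^s\bar u|}\bigr)\,\ud|D^s\bar u|=\mathcal M-B$.

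The second step corrects the boundary datum in a thin inner collar $C_\delta:=\{x\in\Omega:\dist(x,\partial\Omega)<\delta\}$. Using boundary coordinates I interpolate, across $C_\delta$, between the value of $v_k$ on $\{\dist(\cdot,\partial\Omega)=\delta\}$ and the datum $\varphi$ on $\partial\Omega$, thus obtaining $u_{k,\delta}\in\varphi+W^{1,1}_0(\Omega)$ that coincides with $v_k$ on $\Omega\setminus C_\delta$. The normal derivative of the interpolant is of order $(\varphi-\bar u_{|\partial\Omega})/\delta$, so that the $1$-homogeneity of $g^\infty$ compensates the factor $1/\delta$ against the thickness $\delta$ of the collar: using also the growth bound \eqref{crescita} to discard lower-order terms, the energy carried by $C_\delta$ tends to $\int_{\partial\Omega}g^\infty((\varphi-\bar u_{|\partial\Omega})\nu_\Omega)\,\ud\HH^{2n-1}=B$, while the energy on $\Omega\setminus C_\delta$ tends to $\mathcal M-B$. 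A diagonal choice $u_k:=u_{k,\delta_k}$ with $\delta_k\to0$ chosen slowly enough then yields $\mathcal G_\Omega(u_k)\to\mathcal M$, whence $\mathcal I\le\mathcal M$.

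The crux is this second step: the boundary layer must be built so that its energy is asymptotically exactly the penalty $B$ and not more. This forces one to exploit carefully the $1$-homogeneity of $g^\infty$ together with the trace convergence $(v_k)_{|\partial\Omega}\to\bar u_{|\partial\Omega}$ and the linear growth \eqref{crescita}, and to tune the diagonal extraction so that the collar error and the interior error vanish simultaneously. Alternatively, one may run the same argument on the enlarged domain $\Omega_0$: build a recovery sequence for the extension $u_0\in BV_\Phi(\Omega_0)$ of $\bar u$, exploit the identity \eqref{xcv}, and glue the sequence to $\Phi$ in a collar straddling $\partial\Omega$.
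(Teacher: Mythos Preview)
Your outline is in the right spirit, but the paper's proof takes a cleaner route that avoids the explicit boundary-layer construction you describe. Instead of working on $\Omega$ and correcting the trace in a collar $C_\delta$, the paper passes to the enlarged domain $\Omega_0$: given $u\in BV(\Omega)$, it extends $u$ by $\Phi$ to $u_0\in BV_\Phi(\Omega_0)$, invokes \cite[Lemma~2.1]{BF} to obtain $u_h\in C^\infty_c(\Omega_0)$ with $u_h=\Phi$ on $\Omega_0\setminus\overline\Omega$, $u_h\to u_0$ in $L^1(\Omega_0)$, and $\int_{\Omega_0}\sqrt{1+|\nabla u_h|^2}\to\int_{\Omega_0}\sqrt{1+|Du_0|^2}$; then Reshetnyak's continuity theorem (applied to the area-strict convergence) yields $\mathcal G_{\Omega_0}(u_h)\to\mathcal G_{\Omega_0}(u_0)$, and the identity \eqref{xcv} finishes the job. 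The point is that the Bildhauer--Fuchs density lemma already produces approximants with the correct trace $\varphi$ on $\partial\Omega$, so no hand-made collar is needed. Your ``alternative'' in the last paragraph is close to this, but note that the paper does \emph{not} glue in a collar either: the approximants from \cite{BF} equal $\Phi$ outside $\overline\Omega$ by construction.

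As written, your main argument has genuine gaps. In Step~1 you assert that strict convergence of $v_k$ implies $\int_\Omega g(\nabla v_k+\X^*)\to\mathcal M-B$; this is not ``by the integral representation'' but requires Reshetnyak's continuity theorem applied to the measures $(Dv_k,\LL^{2n})$, which is exactly the tool the paper uses on $\Omega_0$. More seriously, in Step~2 the collar interpolation between $v_k$ on $\{\dist=\delta\}$ and $\varphi$ on $\partial\Omega$ is problematic when $\varphi$ is merely in $L^1(\partial\Omega)$: a direct linear interpolation need not lie in $W^{1,1}$ because the tangential derivative of $\varphi$ is undefined. You would have to replace $\varphi$ by a fixed $W^{1,1}$ extension $\hat\varphi$ and argue that the tangential contribution $\int_{C_\delta}|\nabla_{\rm tan}\hat\varphi|$ vanishes with $\delta$, and separately justify the passage from $g$ to $g^\infty$ (which requires some uniformity in the limit $g(tp)/t\to g^\infty(p)$) to obtain the sharp value $B$ rather than merely $CB$. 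All of this can be made rigorous, but it is considerably more laborious than the two-line combination of \cite[Lemma~2.1]{BF} and Reshetnyak that the paper employs.
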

\begin{proof}
First we observe that $\mathcal{G}_{\Omega}(u)=\mathcal{G}_{\varphi,\Omega}(u)$ for any $u\in W^{1,1}_{0}(\Omega)+\varphi$, therefore
\begin{equation}
\inf\left\{\mathcal{G}_{\Omega}(u)\ :\ u\in W^{1,1}_0(\Omega)+\varphi\right\}\geq \min_{u\in BV(\Omega)} \mathcal{G}_{\varphi,\Omega}(u).
\end{equation}
Let $u\in BV(\Omega)$ and define $u_0\in BV_\Phi(\Omega_0)$ as above. Then by \cite[Lemma 2.1]{BF} there exists a sequence $(u_h)$ in $C^{\infty}_c(\Omega_0)$ such that $u_h=\Phi$ on $\Omega_0\setminus \overline\Omega$, 
$u_h\to u_0$ in $L^1(\Omega_0)$ and $\int_{\Omega_0}\sqrt{1+|\nabla u_h|^2}\to \int_{\Omega_0}\sqrt{1+|\nabla u_0|^2}$ as $h\to\infty$. Then, by Reshetnyak's continuity theorem (see e.g.\ \cite[Theorem 1.1]{Spec}) we get
\[
\mathcal{G}_{\Omega_0}(u_0)=\lim_{h} \mathcal{G}_{\Omega_0}(u_h)
\]
in particular
\begin{align*}
\mathcal{G}_{\varphi,\Omega}((u_0)_{|\Omega})=\lim_{h} \mathcal{G}_{\varphi, \Omega}((u_h)_{|\Omega})&=\lim_{h} \mathcal{G}_{\Omega}((u_h)_{|\Omega})\\
&\geq \inf\left\{\mathcal{G}_{\Omega}(u)\ :\ u\in W^{1,1}_0(\Omega)+\varphi\right\}
\end{align*}
and the conclusion follows.
\end{proof}

\subsection{A fundamental inequality} This subsection is devoted to proving the fundamental inequality \eqref{ineqfin}, which will be useful when dealing with comparison principles for minimizers of the functional $\mathcal G_\Omega$. This inequality is a generalization of the well known inequality for the perimeters that can be found, for the Euclidean case, in \cite[Proposition 3.38 (d)]{AFP} and has been extended for perimeters in the Heisenberg case in \cite{PSTV}. We underline also that, when dealing with Sobolev function with given boundary datum, this inequality turns out to be an equality whose proof is quite straightforward (see \cite[Lemma 5.1]{MThaar}).

\begin{theorem}\label{stimaMIN}
	Let $\Omega\subseteq \R^{2n}$ be an open and bounded set with Lipschitz boundary and let $g\colon \Omega\to [0,+\infty)$ be a convex function satisfying \eqref{crescita}. Then, for any $u_1,u_2\in BV(\Omega)$, we have
\begin{align}\label{ineqfin}
    \mathcal G_{\Omega}(u_1\vee u_2)+\mathcal G_{\Omega}(u_1\wedge u_2)\leq \mathcal G_{\Omega}(u_1)+\mathcal G_{\Omega}(u_2)\,.
\end{align}
\end{theorem}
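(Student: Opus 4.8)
The plan is to establish the submodularity inequality \eqref{ineqfin} by reducing it to the corresponding inequality for each of the three pieces of the functional $\mathcal G_\Omega$ separately: the absolutely continuous bulk part $\int_\Omega g(\nabla u+\X^\ast)\ud\LL^{2n}$, and the singular part $\int_\Omega g^\infty(dD^su/d|D^su|)\ud|D^su|$. (Here I work with the functional $\mathcal G_\Omega$ without a boundary-trace penalty, so only the first two terms appear.) The key structural fact I would exploit is the classical lattice decomposition of the gradient measures of $\max$ and $\min$: writing $\overline u\coloneqq u_1\vee u_2$ and $\underline u\coloneqq u_1\wedge u_2$, one has $D\overline u + D\underline u = Du_1 + Du_2$ as measures, and moreover the restrictions of these measures to the sets $\{u_1>u_2\}$, $\{u_1<u_2\}$, and $\{u_1=u_2\}$ redistribute the derivatives of $u_1,u_2$ between $\overline u$ and $\underline u$ in a pointwise measurable way.

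First I would set $\mu\coloneqq|Du_1|+|Du_2|$ and express all four measures $D\overline u, D\underline u, Du_1, Du_2$ as densities against $\mu$, say $\overline\sigma,\underline\sigma,\sigma_1,\sigma_2$. The area-type approximation of \cite[Lemma 2.1]{BF} together with Reshetnyak continuity, as used in the previous proposition, lets me assume $u_1,u_2$ are smooth (or reduce to that case by strict approximation), so that the decomposition of $D\overline u,D\underline u$ into bulk and singular parts is controlled. Then the crucial pointwise inequality to verify is that, $\mu$-a.e., the convex $1$-homogeneous integrand applied to the redistributed densities satisfies a pointwise submodular bound; because on $\{u_1>u_2\}$ one has $\overline\sigma=\sigma_1,\underline\sigma=\sigma_2$ and symmetrically on $\{u_1<u_2\}$, while on the coincidence set the densities split without creating new mass, the relevant integrand values satisfy $G(\overline\sigma)+G(\underline\sigma)=G(\sigma_1)+G(\sigma_2)$ $\mu$-a.e. on the union of the three regions, where $G$ stands for the appropriate ($g$ on the absolutely continuous part, $g^\infty$ on the singular part) integrand. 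Integrating this pointwise identity against the correct reference measures on each region then yields \eqref{ineqfin}.

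The main obstacle will be the careful bookkeeping on the coincidence set $\{\tilde u_1=\tilde u_2\}$ and, more delicately, on its jump and Cantor portions, where the derivatives of $u_1$ and $u_2$ need not vanish and where the recession function $g^\infty$ governs the cost; here one must invoke the subadditivity \eqref{sublin} of $g^\infty$ to absorb cross terms, together with the fine structure from Theorem \ref{tuttoBV} describing how $D^s\overline u$ and $D^s\underline u$ split along $J_{u_1}\cup J_{u_2}$ and the Cantor parts. I expect the inequality rather than equality to enter precisely at the singular jump set, where the superlevel decomposition of $\overline u,\underline u$ may concentrate mass that \eqref{sublin} bounds from above but does not reproduce exactly.

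I would close by assembling the three regional estimates into the global inequality, noting that the coarea-type argument and the mutual singularity of $D^a,D^c,D^j$ ensure no double counting, so that summing the bulk contribution (where convexity of $g$ and the affine redistribution give equality $\LL^{2n}$-a.e.) and the singular contribution (where \eqref{sublin} gives the needed $\le$) produces exactly \eqref{ineqfin}.
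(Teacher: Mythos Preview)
Your plan actually contains a valid shortcut that you do not fully exploit: the strict-approximation step you mention already gives the whole theorem, and makes the remainder of your sketch redundant. For $u_1,u_2\in W^{1,1}(\Omega)$ the identity $\mathcal G_\Omega(u_1\vee u_2)+\mathcal G_\Omega(u_1\wedge u_2)=\mathcal G_\Omega(u_1)+\mathcal G_\Omega(u_2)$ is immediate (split the integral over $\{u_1>u_2\}$ and $\{u_1<u_2\}$; on $\{u_1=u_2\}$ the gradients agree a.e.). For general $u_1,u_2\in BV(\Omega)$, pick smooth $u_i^k$ converging to $u_i$ in the area-strict sense; Reshetnyak continuity gives $\mathcal G_\Omega(u_i^k)\to\mathcal G_\Omega(u_i)$, while $u_1^k\vee u_2^k\to u_1\vee u_2$ and $u_1^k\wedge u_2^k\to u_1\wedge u_2$ in $L^1$, so lower semicontinuity of the relaxed functional together with $\liminf a_k+\liminf b_k\le\liminf(a_k+b_k)$ yields \eqref{ineqfin}. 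This is genuinely shorter than what the paper does.

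The paper instead proves \eqref{ineqfin} by direct computation at the $BV$ level, decomposing each side into its absolutely continuous, Cantor, and jump pieces. The absolutely continuous and Cantor parts give equalities exactly as you outline, via the restrictions of $\nabla u_i$ and $D^cu_i$ to $\{\tilde u_1\ge\tilde u_2\}$ and $\{\tilde u_1<\tilde u_2\}$ outside $S_{u_1}\cup S_{u_2}$. The substantial work is the jump part: on $T=J_{u_1}\cup J_{u_2}$ the paper writes $D^j(u_1\vee u_2)$ and $D^j(u_1\wedge u_2)$ in terms of the one-sided traces $u_i^\pm$, then partitions $T$ into several regions according to the signs of $u_1^\pm-u_2^\pm$ and verifies the inequality on each one. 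The tools there are positive $1$-homogeneity and nonnegativity of $g^\infty$ (giving $g^\infty(\lambda_1\nu)\le g^\infty(\lambda_2\nu)$ for $0\le\lambda_1\le\lambda_2$), rather than the subadditivity \eqref{sublin} you invoke.

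Your proposal conflates the two strategies. Once you have reduced to smooth $u_1,u_2$ there is no singular part left, so the subsequent discussion of Cantor and jump portions against the reference measure $\mu$ is superfluous. Conversely, if you drop the approximation and argue directly as the paper does, then your treatment of the jump set is too thin: ``the superlevel decomposition may concentrate mass that \eqref{sublin} bounds from above'' does not by itself produce the estimate, and a case analysis of the type the paper carries out is needed. Pick one route; the approximation one is shorter.
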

\begin{proof}
Let us define
\begin{align*}
    &X\coloneqq \int_{\Omega}g\left(\nabla (u_1\vee u_2)+\X^*\right)\ud\LL^{2n}+\int_{\Omega}g\left(\nabla (u_1\wedge u_2)+\X^*\right)\ud\LL^{2n},\\
    &Y\coloneqq \int_{\Omega} g^\infty\left(\frac{dD^s(u_1\vee u_2)}{d|D^s(u_1\vee u_2)|}\right)\ud|D^c( u_1\vee u_2)|+\int_{\Omega} g^\infty\left(\frac{dD^s(u_1\wedge u_2)}{d|D^s(u_1\wedge u_2)|}\right)\ud|D^c(u_1\wedge u_2)|,\\
    &Z\coloneqq \int_{\Omega} g^\infty\left(\frac{dD^s(u_1\vee u_2)}{d|D^s(u_1\vee u_2)|}\right)\ud|D^j (u_1\vee u_2)|+\int_{\Omega} g^\infty\left(\frac{dD^s(u_1\wedge u_2)}{d|D^s(u_1\wedge u_2)|}\right)\ud|D^j (u_1\wedge u_2)|\,.
\end{align*}
Observe that \eqref{ineqfin} will follow if we show that
\begin{align}\label{stimfond}
    X+Y+Z\leq \mathcal G_{\Omega}(u_1)+\mathcal G_{\Omega}(u_2).
\end{align}

Without loss of generality, we may assume that $u_1=\tilde u_1$ on $\Omega\setminus S_{u_1}$ and $u_2=\tilde u_2$ on
$\Omega\setminus S_{u_2}$. Setting
\[
\Omega_+\coloneqq (\Omega\setminus (S_{u_1}\cup S_{u_2}))\cap \{u_1\geq u_2\},\quad \Omega_-\coloneqq(\Omega\setminus (S_{u_1}\cup S_{u_2}))\cap \{u_1 < u_2\}
\]
we have (see e.g.\ \cite[Example 3.100]{AFP})
\[
\begin{split}
& \nabla (u_1\vee u_2) = \nabla u_1\,\chi_{\Omega_+} + \nabla u_2\,\chi_{\Omega_-}\quad\LL^{2n}\text{-a.e. in }\Omega\\
& \nabla (u_1\wedge u_2) = \nabla u_2\,\chi_{\Omega_+} + \nabla
u_1\,\chi_{\Omega_-}\quad\LL^{2n}\text{-a.e. in }\Omega\,,
\end{split}
\]
where $\chi_E$ denotes the characteristic function of a set $E$, and similarly
\[
\begin{split}
& D^c (u_1\vee u_2) = D^c u_1\res{\Omega_+} + D^c u_2\res{\Omega_-};\\
& D^c (u_1\wedge u_2) = D^c u_2\res{\Omega_+} + D^c
u_1\res{\Omega_-}.
\end{split}
\]
Therefore
\begin{equation}\label{stima1}
\begin{aligned}
    X=&\int_{\Omega_+}g\left(\nabla u_1+\X^{\ast}\right)\ud\LL^{2n}+\int_{\Omega_-}g\left(\nabla u_2+\X^{\ast}\right)\ud\LL^{2n}\\&+\int_{\Omega_+}g\left(\nabla u_2+\X^{\ast}\right)\ud\LL^{2n}+\int_{\Omega_-}g\left(\nabla u_1+\X^{\ast}\right)\ud\LL^{2n}\\
    =&\int_{\Omega\setminus (S_{u_1}\cup S_{u_2}) }g\left(\nabla u_1+\X^{\ast}\right)\ud\LL^{2n}+\int_{\Omega\setminus (S_{u_1}\cup S_{u_2})}g\left(\nabla u_2+\X^{\ast}\right)\ud\LL^{2n}\\
    =&\int_{\Omega}g\left(\nabla u_1+\X^{\ast}\right)\ud\LL^{2n}+\int_{\Omega}g\left(\nabla u_2+\X^{\ast}\right)\ud\LL^{2n}.
\end{aligned}
\end{equation}
%%Moreover,
%%\begin{align}
%%\label{stima2}
%%   B=&\Big|D^c\Big(\frac{u_1+u_2}{2}\Big)+D^c\Big(\frac{ u_1-u_2}{2}\Big)\res\{u_1\geq u_2\}-D^c\Big(\frac{ u_1-u_2}{2}\Big)\res \{u_1< u_2\}\Big|(\Omega)+\\
%%   \nonumber
%%   &+\Big|D^c\Big(\frac{u_1+u_2}{2}\Big)-D^c\Big(\frac{u_1-u_2}{2}\Big)\res\{u_1\geq u_2\}+D^c\Big(\frac{ u_1-u_2}{2}\Big)\res \{u_1< u_2\}\Big|(\Omega)
%%\end{align}
%%which implies
and
\begin{equation}\label{eq:stima2}
\begin{aligned}
Y & =\int_{\Omega_+}g^\infty\left(\frac{dD^cu_1}{d|D^cu_1|}\right)\ud|D^cu_1|+\int_{\Omega_-}g^\infty\left(\frac{dD^cu_2}{d|D^cu_2|}\right)\ud|D^cu_2|\\&\hphantom{=}+\int_{\Omega_+}g^\infty\left(\frac{dD^cu_2}{d|D^cu_2|}\right)\ud|D^cu_2|+\int_{\Omega_-}g^\infty\left(\frac{dD^cu_1}{d|D^cu_1|}\right)\ud|D^cu_1|\\&=\int_{\Omega\setminus(S_{u_1}\cup S_{u_2})}g^\infty\left(\frac{dD^su_1}{d|D^su_1|}\right)\ud|D^cu_1|+\int_{\Omega\setminus(S_{u_1}\cup S_{u_2})}g^\infty\left(\frac{dD^su_2}{d|D^su_2|}\right)\ud|D^cu_2|\\
&=\int_{\Omega}g^\infty\left(\frac{dD^su_1}{d|D^su_1|}\right)\ud|D^cu_1|+\int_{\Omega}g^\infty\left(\frac{dD^su_2}{d|D^su_2|}\right)\ud|D^cu_2|,
\end{aligned}
\end{equation}
where to obtain the last equality in \eqref{stima1} and in \eqref{eq:stima2}, we used the fact that $\mathcal{L}^{2n}(S_{u_1}\cup S_{u_2})=0$ ( see \cite[Proposition 3.64]{AFP}) and the fact that, since $u,v\in BV(\Omega)$, then $|D^cu|(S_v)=|D^cv|(S_u)=0$.\footnote{This last fact follows from Proposition $3.92$ item c) and Remark $2.50$ in \cite{AFP}}

Recall that, by \cite[Eq.\ (3.90)]{AFP}, one has
\[
\begin{split}
    &D^j u_1=(u_1^+-u_1^-)\nu_1 \HH^{2n-1}\res J_{u_1}\\
    &D^j u_2=(u_2^+-u_2^-)\nu_2 \HH^{2n-1}\res J_{u_2},
\end{split}
\]
where $\nu_1,\nu_2$ are the unit normals to the $(2n-1)$-rectifiable sets $J_{u_1},J_{u_2}$. Without loss of generality, we may assume that $u_1^+\geq u_1^-$ and  $\nu_1=\nu_2$, $\HH^{2n-1}$-a.e. on $J_{u_1}\cap J_{u_2}$; in this way, the $(2n-1)$-rectifiable set $T\coloneqq J_{u_1}\cup J_{u_2}$ is associated with the unit normal $\nu_T$ defined by
\[
\nu_T\coloneqq\nu_1 \text{ on }J_{u_1},\qquad\nu_T\coloneqq\nu_2 \text{ on }T\setminus J_{u_1}.
\]
We extend $u_1^\pm\colon J_{u_1}\to\R$ and $u_2^\pm\colon J_{u_2}\to\R$ to the whole $T$ by setting
\[
u_1^\pm\coloneqq\begin{cases}
u_1^\pm & \text{on }J_{u_1}\\
0 & \text{on }T\setminus J_{u_1},
\end{cases}
\qquad
u_2^\pm\coloneqq\begin{cases}
u_2^\pm & \text{on }J_{u_2}\\
0 & \text{on }T\setminus J_{u_2}.
\end{cases}
\]
In this way one has
\[
D^j(u_1+u_2) = (u_1^+ - u_1^- + u_2^+  - u_2^-)\,\nu_T\:\HH^{2n-1}\res T\,.
\]
By \cite[Theorem 3.99]{AFP}, $|u_1-u_2|\in BV(\Omega)$ and
%%and, setting $g:=|u_1-u_2|/2\in \BV(\Omega)$, we have (see )
%%Let us also define
%%   \[G:=J_{u_1}\cap J_{u_2},\quad H:=J_{u_1}\setminus J_{u_2}, \quad L:=J_{u_2}\setminus J_{u_1}
%%\]
%%and choose the sign of $\nu_1$ and $\nu_2$ in such a way that $\nu_1=\nu_2$ on $G$. Defining $g:=\frac{|u_1-u_2|}{2}\in \BV(\Omega)$, by Theorem \ref{rapjumpart} we obtain
%%\begin{align*}
%%   spt D^j g\subseteq G\cup H\cup L:=T
%%\end{align*}
%%and
\begin{equation}\label{jumpg}
D^j (|u_1-u_2|) =  (|u_1^+-u_2^+| - |u_1^- - u_2^-|)\,\nu_T\: \HH^{2n-1}\res T.
%%   \nonumber
%% =(g^r-g^l)\nu_T \HH^{1}\res G+(g^r-g^l)\nu_T \HH^{1}\res H+(g^r-g^l)\nu_T \HH^{1}\res L
\end{equation}
%%where $T$ is oriented by $\nu_T:T\longrightarrow \mathbb{S}^1$, $g^r:=\frac{|u_1^r-u_2^r|}{2}$ and $g^l:=\frac{|u_1^l-u_2^l|}{2}$. % We start observing that since $u_2$ is approximately continuous for $\LL^{2n}-$a.e.\ in $H$ there holds
%%%\begin{align}
%%%  u_2^r(x)=u_2^l(x)\quad \LL^{2n}-\mbox{a.e.}\ x\in H.
%%%\end{align}
%%%Analogously, we obtain
%%%\begin{align}
%%%  u_1^r(x)=u_1^l(x)\quad \LL^{2n}-\mbox{a.e.}\ x\in L.
%%%\end{align}
%%%Finally,
We can then write
\[
%%\begin{align}\label{jumpvsu}
\begin{split}
    &D^j (u_1\vee u_2)=D^j\left(\tfrac{u_1+u_2}{2} + \tfrac{|u_1-u_2|}{2}\right) =\tfrac 12D^j\left(u_1+u_2\right)+\tfrac12 D^j\left( |u_1-u_2|\right)\\
%%\label{jumpvgiu}
    &D^j (u_1\wedge u_2)=D^j\left(\tfrac{u_1+u_2}{2} - \tfrac{|u_1-u_2|}{2}\right)= \tfrac 12D^j\big(u_1+u_2\big)-\tfrac 12D^j\left( |u_1-u_2|\right).
\end{split}
\]
By using this decomposition and \eqref{jumpg}, we have
\begin{equation}\label{eq:stimaC}
\begin{aligned}
    Z=&\int_{T}g^\infty\left(\frac{dD^j(u_1\vee u_2)}{d|D^j(u_1\vee u_2)|}\right)\ud|D^j (u_1\vee u_2)|+\int_{T}g^\infty\left(\frac{dD^j(u_1\wedge u_2)}{d|D^j(u_1\wedge u_2)|}\right)\ud|D^j (u_1\wedge u_2)|\\
    =&\frac{1}{2}\int_T g^\infty\left(\left(u_1^+-u_1^-+u_2^+-u_2^-+|u_1^+-u_2^+|-|u_1^--u_2^-|\right)\nu_T\right)\ud\HH^{2n-1}\\
    &+\frac{1}{2}\int_T g^{\infty}\left(\left(u_1^+-u_1^-+u_2^+-u_2^--|u_1^+-u_2^+|+|u_1^--u_2^-|\right) \nu_T\right)\ud\HH^{2n-1}.
\end{aligned}
\end{equation}
Let for shortness $\alpha, \beta\colon T\to \R$ be the functions defined by
\[
\begin{aligned}
\alpha&\coloneqq u_1^+-u_1^-+u_2^+-u_2^-+|u_1^+-u_2^+|-|u_1^--u_2^-|,\\
\beta&\coloneqq u_1^+-u_1^-+u_2^+-u_2^--|u_1^+-u_2^+|+|u_1^--u_2^-|.
\end{aligned}
\]
To estimate $Z$, we are going to split $T$ into several regions. Set
\[
T'\coloneqq\{x\in T: u_2^+(x)\geq u_2^-(x)\}, \quad \text{and}\quad T''\coloneqq\{x\in T:u_2^+(x)< u_2^-(x) \}.
\]
Then, taking into account that $u_1^-\leq u_1^+$ on $T$, one can easily check that both $\alpha$ and $\beta$ are positive on $T'$. Being $g^\infty$ positively homogeneous, then one has
\begin{equation}\label{eq:identitasuT'}
\begin{aligned}
&\frac 12\int_{T'}g^\infty(\alpha \nu_T)\ud\HH^{2n-1}+ \frac 12\int_{T'}g^\infty(\beta \nu_T)\ud\HH^{2n-1}=\frac{1}{2}\int_{T'}(\alpha+\beta)g^\infty(\nu_T)\ud\HH^{2n-1}\\
& =\int_{T'}(u_1^+-u_1^-)g^\infty(\nu_T)\ud\HH^{2n-1}+\int_{T'}(u_2^+-u_2^-)g^\infty(\nu_T)\ud\HH^{2n-1}\\
&= \int_{T'}g^\infty((u_1^+-u_1^-)\nu_T)\ud\HH^{2n-1}+\int_{T'}g^\infty((u_2^+-u_2^-)\nu_T)\ud\HH^{2n-1}.
\end{aligned}
\end{equation}
We now subdivide $T''$ into the union of the following disjoint subsets:
\[
\begin{aligned}
T''_{++}&\coloneqq\{x\in T'':u_1^+(x)\geq u_2^+(x), u_1^-(x)\geq u_2^-(x) \}, \quad T''_{--}\coloneqq \{x\in T'':u_1^+(x)< u_2^+(x), u_1^-(x)< u_2^-(x) \}\\
T''_{+-}&\coloneqq \{x\in T'':u_1^+(x)\geq u_2^+(x), u_1^-(x)< u_2^-(x) \}, \quad T''_{-+}\coloneqq\{x\in T'':u_1^+(x)< u_2^+(x), u_1^-(x)\geq u_2^-(x) \}.
\end{aligned}
\]
Notice that, for every $x\in T''_{++}$, one has $\alpha(x)=2(u_1^+(x)-u_1^-(x))$ and $\beta(x)=2(u_2^+(x)-u_2^-(x))$,  conversely, for every $x\in T''_{--}$, one has $\alpha(x)=2(u_2^+(x)-u_2^-(x))$ and $\beta(x)=2(u_1^+(x)-u_1^-(x))$. Using this information, we easily obtain
\begin{equation}\label{eq:stima++--}
\begin{aligned}
&\frac 12\int_{T''_{++}\cup T''_{--}}g^{\infty}(\alpha \nu_T)\ud \HH^{2n-1}+\frac 12\int_{T''_{++}\cup T''_{--}}g^{\infty}(\beta \nu_T)\ud \HH^{2n-1}\\&=\int_{T''_{++}\cup T''_{--}}g^{\infty}((u_1^+-u_1^-) \nu_T)\ud \HH^{2n-1}+\int_{T''_{++}\cup T''_{--}}g^{\infty}((u_2^+-u_2^-) \nu_T)\ud \HH^{2n-1}.
\end{aligned}
\end{equation}
We now consider $T''_{+-}$. The estimate on $T''_{-+}$ can be done in a completely analogous way. We first write $T''_{+-}=\Gamma_1\cup\Gamma_2\cup\Gamma_3\cup \Gamma_4$, where
\[
\begin{aligned}
\Gamma_1&\coloneqq\{x\in T''_{+-}: u_1^+(x)\geq u_2^-(x), u_2^+(x)\geq u_1^-(x) \}, \quad \Gamma_2\coloneqq\{x\in T''_{+-}: u_1^+(x)\geq u_2^-(x), u_2^+(x)< u_1^-(x) \}\\
\Gamma_3&\coloneqq\{x\in T''_{+-}: u_1^+(x)< u_2^-(x), u_2^+(x)\geq u_1^-(x) \},\quad \Gamma_4\coloneqq\{x\in T''_{+-}: u_1^+(x)< u_2^-(x), u_2^+(x)<u_1^-(x) \}.
\end{aligned}
\]
Notice that, for every $x\in T''_{+-}$, one has that $\alpha(x)=2(u_1^+(x)-u_2^-(x))$ and $\beta(x)=2(u_2^+(x)-u_1^-(x))$ and, by construction, $\alpha$ is positive on $\Gamma_1\cup \Gamma_2$ and strictly negative on $\Gamma_3\cup \Gamma_4$, while $\beta$ is positive on $\Gamma_1\cup\Gamma_3$ and strictly negative on $\Gamma_2\cup \Gamma_4$. Using the positive homogeneity of $g^\infty$, we get
\begin{equation}\label{eq:stimaGamma1}
\begin{aligned}
&\frac 12\int_{\Gamma_1} g^\infty(\alpha \nu_T)\ud\HH^{2n-1}+\frac 12\int_{\Gamma_1} g^\infty(\beta \nu_T)\ud\HH^{2n-1}\\&=\int_{\Gamma_1}(u_1^+-u_2^-)g^\infty(\nu_T)\ud\HH^{2n-1}+\int_{\Gamma_1}(u_2^+-u_1^-)g^\infty(\nu_T)\ud\HH^{2n-1}\\&=\int_{\Gamma_1}(u_1^+-u_1^-)g^\infty(\nu_T)\ud\HH^{2n-1}+\int_{\Gamma_1}(u_2^+-u_2^-)g^\infty(\nu_T)\ud\HH^{2n-1}\\& =\int_{\Gamma_1}g^\infty((u_1^+-u_1^-)\nu_T)\ud\HH^{2n-1}+\int_{\Gamma_1}g^\infty((u_2^+-u_2^-)\nu_T)\ud\HH^{2n-1}.
\end{aligned}
\end{equation}
Taking into account that $\alpha$ and $\beta$ are strictly negative on $\Gamma_4$, we also have
\begin{equation}\label{eq:stimaGamma4}
\begin{aligned}
&\frac 12\int_{\Gamma_4} g^\infty(\alpha \nu_T)\ud\HH^{2n-1}+\frac 12\int_{\Gamma_4} g^\infty(\beta \nu_T)\ud\HH^{2n-1}\\&=\int_{\Gamma_4}(u_2^--u_1^+)g^\infty(-\nu_T)\ud\HH^{2n-1}+\int_{\Gamma_4}(u_1^--u_2^+)g^\infty(-\nu_T)\ud\HH^{2n-1}\\&=\int_{\Gamma_4}(u_1^--u_1^+)g^\infty(-\nu_T)\ud\HH^{2n-1}+\int_{\Gamma_4}(u_2^--u_2^+)g^\infty(-\nu_T)\ud\HH^{2n-1}\\& =\int_{\Gamma_4}g^\infty((u_1^+-u_1^-)\nu_T)\ud\HH^{2n-1}+\int_{\Gamma_4}g^\infty((u_2^+-u_2^-)\nu_T)\ud\HH^{2n-1}.
\end{aligned}
\end{equation}
Recall that, by \eqref{crescita}, the map $g^\infty$ is positive, and therefore, for any $0\leq \lambda_1\leq \lambda_2$ and any $x\in \R^{2n}$, one has $g^{\infty}(\lambda_1x)\leq g^\infty(\lambda_2x)$. We can make the estimate on $\Gamma_2$, taking into account that $\alpha$ is positive and $\beta$ is strictly negative:
\begin{equation}\label{eq:stimaGamma2}
\begin{aligned}
&\frac 12\int_{\Gamma_2} g^\infty(\alpha \nu_T)\ud\HH^{2n-1}+\frac 12\int_{\Gamma_2} g^\infty(\beta \nu_T)\ud\HH^{2n-1}\\&=\int_{\Gamma_2}(u_1^+-u_2^-)g^\infty(\nu_T)\ud\HH^{2n-1}+\int_{\Gamma_2}(u_1^--u_2^+)g^\infty(-\nu_T)\ud\HH^{2n-1}\\&=\int_{\Gamma_2}(u_1^+-u_1^-+u_1^--u_2^-)g^\infty(\nu_T)\ud\HH^{2n-1}+\int_{\Gamma_2}(u_1^--u_2^-+u_2^--u_2^+)g^\infty(-\nu_T)\ud\HH^{2n-1}\\& \leq\int_{\Gamma_2}g^\infty((u_1^+-u_1^-)\nu_T)\ud\HH^{2n-1}+\int_{\Gamma_2}g^\infty((u_2^+-u_2^-)\nu_T)\ud\HH^{2n-1},
\end{aligned}
\end{equation}
where in the last inequality we used the fact that $(u_1^--u_2^-)_{|T''_{+-}}<0$ and $(u_2^--u_2^+)_{|T''}>0$. Analogously, for $\Gamma_3$, we have
\begin{equation}\label{eq:stimaGamma3}
\begin{aligned}
&\frac 12\int_{\Gamma_3} g^\infty(\alpha \nu_T)\ud\HH^{2n-1}+\frac 12\int_{\Gamma_3} g^\infty(\beta \nu_T)\ud\HH^{2n-1}\\&=\int_{\Gamma_3}(u_2^--u_1^+)g^\infty(-\nu_T)\ud\HH^{2n-1}+\int_{\Gamma_3}(u_2^+-u_1^-)g^\infty(\nu_T)\ud\HH^{2n-1}\\&=\int_{\Gamma_3}(u_2^--u_2^++u_2^+-u_1^+)g^\infty(-\nu_T)\ud\HH^{2n-1}+\int_{\Gamma_3}(u_2^+-u_1^++u_1^+-u_1^-)g^\infty(\nu_T)\ud\HH^{2n-1}\\& \leq\int_{\Gamma_3}g^\infty((u_2^+-u_2^-)\nu_T)\ud\HH^{2n-1}+\int_{\Gamma_3}g^\infty((u_1^+-u_1^-)\nu_T)\ud\HH^{2n-1},
\end{aligned}
\end{equation}
where in the last inequality we have used the fact that $(u_2^--u_2^+)_{|T''}>0$ and $(u_2^+-u_1^+)_{|T''_{+-}}\leq0$. Combining \eqref{eq:stimaGamma1}, \eqref{eq:stimaGamma2}, \eqref{eq:stimaGamma3} and \eqref{eq:stimaGamma4} one obtains
\begin{equation}\label{eq:stimaT+-}
\begin{aligned}
&\frac 12\int_{T''_{+-}} g^\infty(\alpha \nu_T)\ud\HH^{2n-1}+\frac 12\int_{T''_{+-}} g^\infty(\beta \nu_T)\ud\HH^{2n-1} \\ &\leq\int_{T''_{+-}}g^\infty((u_2^+-u_2^-)\nu_T)\ud\HH^{2n-1}+\int_{T''_{+-}}g^\infty((u_1^+-u_1^-)\nu_T)\ud\HH^{2n-1}.
\end{aligned}
\end{equation}
In a completely analogous fashion, we can also write
\begin{equation}\label{eq:stimaT-+}
\begin{aligned}
&\frac 12\int_{T''_{-+}} g^\infty(\alpha \nu_T)\ud\HH^{2n-1}+\frac 12\int_{T''_{-+}} g^\infty(\beta \nu_T)\ud\HH^{2n-1} \\ &\leq\int_{T''_{-+}}g^\infty((u_2^+-u_2^-)\nu_T)\ud\HH^{2n-1}+\int_{T''_{-+}}g^\infty((u_1^+-u_1^-)\nu_T)\ud\HH^{2n-1}.
\end{aligned}
\end{equation}
As a direct consequence of \eqref{eq:stima++--}, \eqref{eq:stimaT+-} and \eqref{eq:stimaT-+}, we then have
\begin{equation}\label{eq:stimaT''}
\begin{aligned}
&\frac 12\int_{T''} g^\infty(\alpha \nu_T)\ud\HH^{2n-1}+\frac 12\int_{T''} g^\infty(\beta \nu_T)\ud\HH^{2n-1} \\ &\leq\int_{T''}g^\infty((u_2^+-u_2^-)\nu_T)\ud\HH^{2n-1}+\int_{T''}g^\infty((u_1^+-u_1^-)\nu_T)\ud\HH^{2n-1}.
\end{aligned}
\end{equation}
The thesis is then obtained by combining \eqref{stima1}, \eqref{eq:stima2}, \eqref{eq:stimaC}, \eqref{eq:identitasuT'} and \eqref{eq:stimaT''}.
\end{proof}
\begin{corollary}\label{lem:disbordo}
	Let $\Omega\subseteq \R^{2n}$ be an open and bounded set with Lipschitz boundary and let $g\colon \Omega\to [0,+\infty)$ be a convex function satisfying \eqref{crescita}.
	Then, for every $\varphi_1, \varphi_2\in L^1(\partial\Omega)$ and every $u_1,u_2\in BV(\Omega)$ one has
	\begin{equation}\label{eq:ineqfincondato}
	\mathcal G_{\Omega, \varphi_1\vee \varphi_2}(u_1\vee u_2)+\mathcal G_{\Omega, \varphi_1\wedge \varphi_2}(u_1\wedge u_2)\leq \mathcal G_{\Omega, \varphi_1}(u_1)+\mathcal G_{\Omega, \varphi_2}( u_2).
	\end{equation}
\end{corollary}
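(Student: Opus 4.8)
The plan is to split each functional into its relaxed bulk part and its boundary penalization, dispose of the bulk part by the already-proved Theorem \ref{stimaMIN}, and reduce the boundary part to a pointwise one-variable convexity estimate. First I would record the decomposition
\[
\mathcal{G}_{\Omega,\varphi}(u)=\mathcal{G}_{\Omega}(u)+\int_{\partial\Omega}g^\infty\big((\varphi-u_{|\partial\Omega})\nu_\Omega\big)\,\ud\HH^{2n-1},
\]
where $\mathcal{G}_{\Omega}$ is the relaxed functional without boundary term appearing in \eqref{ineqfin}. Applying this to the four functionals in \eqref{eq:ineqfincondato} and invoking Theorem \ref{stimaMIN} in the form $\mathcal{G}_{\Omega}(u_1\vee u_2)+\mathcal{G}_{\Omega}(u_1\wedge u_2)\le \mathcal{G}_{\Omega}(u_1)+\mathcal{G}_{\Omega}(u_2)$, the claim is reduced to the boundary inequality
\[
\int_{\partial\Omega}\Big[g^\infty\big((\varphi_1\vee\varphi_2-(u_1\vee u_2)_{|\partial\Omega})\nu_\Omega\big)+g^\infty\big((\varphi_1\wedge\varphi_2-(u_1\wedge u_2)_{|\partial\Omega})\nu_\Omega\big)\Big]\,\ud\HH^{2n-1}\le R,
\]
where $R$ is the analogous sum built from the pairs $(\varphi_1,u_1)$ and $(\varphi_2,u_2)$.

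Next I would use Remark \ref{traccesupinf}, according to which $(u_1\vee u_2)_{|\partial\Omega}=u_{1|\partial\Omega}\vee u_{2|\partial\Omega}$ and $(u_1\wedge u_2)_{|\partial\Omega}=u_{1|\partial\Omega}\wedge u_{2|\partial\Omega}$. This turns the boundary inequality into a purely pointwise statement integrated against $\HH^{2n-1}$: fixing $x\in\partial\Omega$, writing $\nu=\nu_\Omega(x)$ and abbreviating $a_i=\varphi_i(x)$, $b_i=u_{i|\partial\Omega}(x)$, and introducing the convex function of one real variable $F(s)\coloneqq g^\infty(s\nu)$ (convex because $g^\infty$ is convex and $s\mapsto s\nu$ is affine), it suffices to prove
\[
F\big(a_1\vee a_2-b_1\vee b_2\big)+F\big(a_1\wedge a_2-b_1\wedge b_2\big)\le F(a_1-b_1)+F(a_2-b_2).
\]

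The heart of the matter is this elementary inequality, and it is the only genuinely new point. I would assume without loss of generality $a_1\ge a_2$, since the statement is symmetric under swapping the pairs $(a_1,b_1)\leftrightarrow(a_2,b_2)$. If moreover $b_1\ge b_2$, the two sides coincide term by term. If instead $b_1<b_2$, I set $s_1=a_1-b_1$, $s_2=a_2-b_2$, $s_\vee=a_1-b_2$, $s_\wedge=a_2-b_1$, and check directly that $s_2\le s_\vee,s_\wedge\le s_1$ together with $s_\vee+s_\wedge=s_1+s_2$; writing the two middle values $s_\vee,s_\wedge$ as the complementary convex combinations $s_\vee=\lambda s_1+(1-\lambda)s_2$, $s_\wedge=(1-\lambda)s_1+\lambda s_2$ and applying convexity of $F$ to each yields $F(s_\vee)+F(s_\wedge)\le F(s_1)+F(s_2)$. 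Integrating this pointwise bound over $\partial\Omega$ and combining it with Theorem \ref{stimaMIN} gives \eqref{eq:ineqfincondato}.

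I expect the only delicate verification to be the majorization in the case $b_1<b_2$, namely that $s_\vee$ and $s_\wedge$ lie between the extreme values $s_1$ and $s_2$ with the same total; once this is in place the convexity step is immediate, and everything else is bookkeeping resting on the trace identities of Remark \ref{traccesupinf} and on the already-established bulk inequality of Theorem \ref{stimaMIN}.
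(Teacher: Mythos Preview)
Your argument is correct and takes a genuinely different, more elementary route than the paper. The paper does not split $\mathcal G_{\Omega,\varphi}$ into bulk plus boundary; instead it embeds $\Omega$ in a larger Lipschitz domain $\Omega_0$, extends each $\varphi_i$ to a Sobolev function $w_i$ on the collar $\Omega_0\setminus\overline\Omega$, glues $u_i$ with $w_i$ to get $v_i\in BV(\Omega_0)$, and then applies Theorem~\ref{stimaMIN} on $\Omega_0$ to the $v_i$. The boundary penalization thus becomes the interior jump term along $\partial\Omega$ inside $\Omega_0$, and the collar contributions cancel because for Sobolev functions the submodularity inequality is actually an equality. Your approach instead keeps the boundary term separate and reduces it to the scalar convexity lemma $F(a_1\vee a_2-b_1\vee b_2)+F(a_1\wedge a_2-b_1\wedge b_2)\le F(a_1-b_1)+F(a_2-b_2)$ for convex $F$, which you prove correctly via the majorization $s_2\le s_\vee,s_\wedge\le s_1$, $s_\vee+s_\wedge=s_1+s_2$. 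This avoids the need for the trace-extension theorem \cite[Theorem~2.16]{Giusti} and the explicit polar-vector computation on the extended domain; the paper's route, on the other hand, has the conceptual advantage of deriving the boundary inequality as a byproduct of the interior one rather than as a separate lemma.
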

\begin{proof}
	Let $u_1, u_2\in BV(\Omega)$ and $\varphi_1,\varphi_2\in L^1(\partial \Omega)$. Notice that, if $u_1,u_2 \in W^{1,1}(\Omega)$, we immediately have
	\begin{equation}\label{eq:ugualeSobolev}
	\mathcal G_\Omega(u_1\vee u_2)+\mathcal G_\Omega(u_1\wedge u_2)= \mathcal G_\Omega(u_1)+\mathcal G_\Omega(u_2).
	\end{equation}
	Fix any bounded open and Lipschitz set $\Omega_0\Supset\Omega$. By \cite[Theorem 2.16]{Giusti}, we can find $w_1,w_2\in W^{1,1}(\Omega_0\setminus \overline{\Omega})$ with ${w_1}_{|\partial \Omega}=\varphi_1$ and ${w_2}_{|\partial \Omega}=\varphi_2$. Set now
	\[
	v_1\coloneqq
	\begin{cases}
	w_1 & \text{ on $\Omega_0\setminus \overline{\Omega}$}\\
	u_1&	\text{ on $\Omega$}
	\end{cases}\quad \text{ and }\quad v_2\coloneqq\begin{cases}
	w_2 &\text{ on $\Omega_0\setminus \overline{\Omega}$}\\
	u_2&	\text{ on $\Omega$}.
	\end{cases}
	\]
	By \cite[Theorem 3.84]{AFP}, $v_1,v_2\in BV(\Omega_0)$ and, moreover, if $\nu_\Omega$ denotes the exterior normal to $\Omega$,  one has
	\[
	Dv_i=Du_i\res \Omega+ Dw_i \res (\Omega_0\setminus \overline{\Omega})+ (w_i-u_i) \nu_\Omega \mathcal H^{2n-1}\res \partial \Omega, \quad \text{ for }\; i=1,2,
	\]
	from which we can compute, up to $|D^sv|$-negligible sets, the polar vector:
	\[
	\frac{dD^sv_i}{d|D^sv_i|}=\begin{cases}
	\displaystyle\frac{dD^su_i}{d|D^su_i|} & \text{ on $\Omega$}\\
	0& \text{ on $\Omega_0\setminus \overline \Omega$}\\
	\displaystyle \frac{(w_i-u_i) }{|w_i-u_i|}\nu_\Omega &\text{ on $\partial\Omega$}.
	\end{cases}
	\]
	Then, using the previous expression, the fact that $g^\infty$ is homogeneous and the definition of $w_i$, we get
	\[
	\begin{aligned}
	\mathcal G_{\Omega_0}(v_i)&=\int_{\Omega_0}g(\nabla v_i+\X^{\ast})\ud\mathcal L^{2n}+\int_{\Omega_0} g^\infty\left(\frac{dD^sv_i}{d|D^sv_i|}\right)\ud|D^sv_i|\\
	&= \mathcal G_{\Omega}(u_i)+ \mathcal G_{\Omega_0\setminus \overline \Omega}(w_i)+\int_{\partial \Omega} g^\infty((w_i-u_i)\otimes\nu_\Omega)\ud\mathcal H^{2n-1}\\&= \mathcal G_{\Omega}(u_i)+\mathcal G_{\Omega_0\setminus \overline \Omega}(w_i)+\int_{\partial \Omega} g^\infty((\varphi_i-u_i)\otimes\nu_\Omega)\ud\mathcal H^{2n-1}\\&= \mathcal G_{\Omega_0 \setminus \overline\Omega}(w_i)+ \mathcal G_{\Omega, \varphi_i}(u_i), \quad \text{ for $i=1,2$}.
	\end{aligned}
	\]
	Similarly, using Remark \ref{traccesupinf} we also have
	\[
	\begin{aligned}
	\mathcal G_{\Omega_0}(v_1\vee v_2)&= \mathcal G_{\Omega_0\setminus\overline \Omega}(w_1\vee w_2)+ \mathcal G_{\Omega, \varphi_1\vee \varphi _2}(u_1\vee u_2) \quad { and}\\
	\mathcal G_{\Omega_0}(v_1\wedge v_2)&= \mathcal G_{\Omega_0\setminus\overline \Omega}(w_1\wedge w_2)+ \mathcal G_{\Omega, \varphi_1\wedge \varphi _2}(u_1\wedge u_2).
	\end{aligned}
	\]
	We can then conclude combining the previous identities with \eqref{eq:ugualeSobolev}  applied on $\Omega_0\setminus \overline{\Omega}$, $w_1, w_2$ and Theorem \ref{stimaMIN} applied on $\Omega_0$, $v_1$ and $v_2$:
	\[
	\begin{split}
	\mathcal G_{\Omega, \varphi_1\vee \varphi_2}(u_1\vee u_2) + \mathcal G_{\Omega, \varphi_1\wedge \varphi_2}(u_1\wedge u_2)\\=\mathcal G_{\Omega_0}(v_1\vee v_2) +\mathcal G_{\Omega_0}(v_1\wedge v_2)-\mathcal G_{\Omega_0\setminus\overline\Omega}( w_1\vee w_2) -\mathcal G_{\Omega_0\setminus\overline\Omega}( w_1\wedge w_2)\\
	\leq \mathcal G_{\Omega_0}(v_1)+\mathcal G_{\Omega_0}(v_2)-\mathcal G_{\Omega_0\setminus\overline\Omega}(w_1)-\mathcal G_{\Omega_0\setminus\overline\Omega}(w_2)=\mathcal G_{\Omega, \varphi_1}(u_1)+\mathcal G_{\Omega,\varphi_2}(u_2).\qedhere
	\end{split}
	\]
\end{proof}

%\begin{remark}
	%Notice that Theorem \ref{stimaMIN} and Corollary \ref{lem:disbordo} can be stated, only with some typographical effort, for more general functionals replacing the map $g$ with a function $f\colon \Omega\times \R^{k\times N}\to \R^k$ with linear growth and satisfying assumptions (H1)-(H5) of \cite{FonsecaMuller} and where $\Omega$ is an open and bounded subset of $\R^N$ with Lipschitz boundary. The relaxed functional of
	%\[
	%\mathcal F_\Omega(u)\coloneqq \int_\Omega f(x,\nabla u(x))\ud\LL^{N}(x)
	%\]
	%in $BV(\Omega;\R^k)$ would indeed read as
	%\[
%	F_\Omega(u)=\int_{\Omega} f(x,\nabla u(x))\ud \LL^N(x)+\int_\Omega f^\infty\left(x,\frac{d D^su}{d|D^su|}(x)\right)\ud|D^su|(x),
	%\]
	%see \cite[Remark 2.17]{FonsecaMuller}.
%\end{remark}

\subsection{The set of minimizers and comparison principles}
%\begin{definition}
	%Let $\varphi\in W^{1,1}(\Omega)$. A function $u\in BV(\Omega)$ is called a generalized minimizer of $\mathcal G_{\Omega}$ in $\varphi+W^{1,1}_0(\Omega)$ if
	%\[
	%\mathcal{G}_{\varphi,\Omega}(u)\leq \mathcal{G}_{\varphi,\Omega}(w)\ \ \mbox{for all}\ w\in BV(\Omega).
	%\]
%\end{definition}
%\begin{definition}
%	Let $\varphi\in W^{1,1}(\Omega)$. We say that a sequence $(u_k)$ in $W^{1,1}_0(\Omega)+\varphi$ is a minimizing sequence for $\G_{\Omega}$ in $W^{1,1}_0(\Omega)+\varphi$  if there holds
	%\[
	%\G_{\Omega}(u_k)\to \inf_{W^{1,1}_0(\Omega)+\varphi} \G_{\Omega}\quad \mbox{as}\ k\to\infty.
	%\]
%\end{definition}

Given a  bounded open set $\Omega\subset\bbR^{2n}$ with Lipschitz regular boundary and a function $\varphi\in L^1(\partial\Omega)$ we define
\[
\cc_\varphi\coloneqq\argmin\limits_{u} \area_{\varphi,\Omega}(u)\,.
\]
We have already proved that $\cc_\varphi\subset \BV(\Omega)$ is nonempty.  \\
Using Theorem \ref{stimaMIN} and Corollary \ref{lem:disbordo}, the proof of Proposition \ref{maxmin} below is completely analogous to \cite[Proposition 4.3]{PSTV} and we omit it.

\begin{proposition}\label{maxmin}
	Let $\varphi_1,\varphi_2 \in L^1(\partial\Omega)$ be such that
	$\varphi_1\leq \varphi_2$ $\HH^{2n-1}$-a.e.\ on $\partial\Omega$ and assume
	that $u_1\in\cc_{\varphi_1}$ and $u_2\in\cc_{\varphi_2}$. Then
	$(u_1\vee u_2)\in\cc_{\varphi_2}$ and  $(u_1\wedge
	u_2)\in\cc_{\varphi_1}$.
\end{proposition}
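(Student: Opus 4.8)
The plan is to prove Proposition \ref{maxmin} as a formal consequence of the two lattice-type inequalities already established, namely Theorem \ref{stimaMIN} and its boundary-datum version Corollary \ref{lem:disbordo}. The key structural observation is that since $\varphi_1 \le \varphi_2$ holds $\HH^{2n-1}$-a.e.\ on $\partial\Omega$, we have $\varphi_1 \vee \varphi_2 = \varphi_2$ and $\varphi_1 \wedge \varphi_2 = \varphi_1$ a.e.\ on $\partial\Omega$. Applying Corollary \ref{lem:disbordo} to $u_1, u_2$ with boundary data $\varphi_1, \varphi_2$ therefore yields
\[
\mathcal G_{\Omega, \varphi_2}(u_1\vee u_2)+\mathcal G_{\Omega, \varphi_1}(u_1\wedge u_2)\leq \mathcal G_{\Omega, \varphi_1}(u_1)+\mathcal G_{\Omega, \varphi_2}(u_2).
\]

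First I would record that $u_1 \in \cc_{\varphi_1}$ and $u_2 \in \cc_{\varphi_2}$ means precisely that $u_i$ minimizes $\mathcal G_{\Omega,\varphi_i}$, so that for \emph{any} competitor $w \in BV(\Omega)$ one has $\mathcal G_{\Omega,\varphi_i}(u_i) \le \mathcal G_{\Omega,\varphi_i}(w)$. Testing this with $w = u_1 \wedge u_2$ for $i=1$ and $w = u_1 \vee u_2$ for $i=2$ gives the two lower bounds
\[
\mathcal G_{\Omega,\varphi_1}(u_1) \le \mathcal G_{\Omega,\varphi_1}(u_1\wedge u_2), \qquad \mathcal G_{\Omega,\varphi_2}(u_2) \le \mathcal G_{\Omega,\varphi_2}(u_1\vee u_2).
\]
Here I would note that $u_1\wedge u_2$ and $u_1\vee u_2$ lie in $BV(\Omega)$ by the standard lattice property recalled just after \eqref{normBV}, so they are admissible competitors.

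The argument then closes by a pinching: adding the two displayed lower bounds and comparing with the Corollary \ref{lem:disbordo} inequality forces both to be equalities. Concretely, summing the minimality inequalities gives $\mathcal G_{\Omega,\varphi_1}(u_1) + \mathcal G_{\Omega,\varphi_2}(u_2) \le \mathcal G_{\Omega,\varphi_1}(u_1\wedge u_2) + \mathcal G_{\Omega,\varphi_2}(u_1\vee u_2)$, while Corollary \ref{lem:disbordo} gives the reverse inequality; hence both sides coincide. Since the sum of the two nonnegative gaps $\mathcal G_{\Omega,\varphi_1}(u_1\wedge u_2) - \mathcal G_{\Omega,\varphi_1}(u_1) \ge 0$ and $\mathcal G_{\Omega,\varphi_2}(u_1\vee u_2) - \mathcal G_{\Omega,\varphi_2}(u_2) \ge 0$ is zero, each gap vanishes separately. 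This yields $\mathcal G_{\Omega,\varphi_1}(u_1\wedge u_2) = \mathcal G_{\Omega,\varphi_1}(u_1) = \min \mathcal G_{\Omega,\varphi_1}$ and $\mathcal G_{\Omega,\varphi_2}(u_1\vee u_2) = \mathcal G_{\Omega,\varphi_2}(u_2) = \min \mathcal G_{\Omega,\varphi_2}$, which is exactly the assertion $u_1\wedge u_2 \in \cc_{\varphi_1}$ and $u_1\vee u_2 \in \cc_{\varphi_2}$.

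I do not expect any serious obstacle here, which is consistent with the authors' remark that the proof is ``completely analogous'' to \cite[Proposition 4.3]{PSTV} and can be omitted. The only points requiring a moment of care are the identification $\varphi_1\vee\varphi_2 = \varphi_2$, $\varphi_1\wedge\varphi_2=\varphi_1$ under the hypothesis $\varphi_1\le\varphi_2$, and the compatibility of the traces of $u_1\vee u_2$ and $u_1\wedge u_2$ with these maxima and minima, which is guaranteed by Remark \ref{traccesupinf}. Beyond these, the whole proposition is a clean two-inequality sandwich, so the real mathematical content lives entirely in Theorem \ref{stimaMIN} and Corollary \ref{lem:disbordo}, not in this deduction.
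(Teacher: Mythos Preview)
Your proposal is correct and follows precisely the route the paper indicates: the authors omit the proof but explicitly point to Theorem \ref{stimaMIN} and Corollary \ref{lem:disbordo} as the tools, and your sandwich argument (identify $\varphi_1\vee\varphi_2=\varphi_2$, $\varphi_1\wedge\varphi_2=\varphi_1$, apply Corollary \ref{lem:disbordo}, then pinch against the two minimality inequalities) is exactly the analogue of \cite[Proposition 4.3]{PSTV}. One cosmetic remark: the fact that $u_1\vee u_2,\,u_1\wedge u_2\in BV(\Omega)$ is recorded in Remark \ref{traccesupinf}, not immediately after \eqref{normBV}.
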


In \cite{MTccm} (see also \cite{PSTV}), it has been proved that the set of minimizers of a superlinear convex functional has a maximum $u$ (resp.\ a minimum $u$) defined as the pointwise supremum (infimum) of the minimizers. These special minimizers are then used to prove one-sided Comparison Principles. 

\begin{proposition}\label{esistmax}
	Let $\Omega\subset\bbR^{2n}$ be a bounded open set with Lipschitz regular boundary and let $\varphi\in L^1(\partial\Omega)$. Then, there
	exists $\overline{u},\underline{u}\in \cc_\varphi$ such that the
	inequalities
	\begin{equation}\label{min9}
	\underline u \leq u\leq \overline u,\quad \LL^{2n}\mbox{-a.e. in}\  \Omega
	\end{equation}
	hold for any $u\in\cc_\varphi$.
\end{proposition}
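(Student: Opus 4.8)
The plan is to realise $\cc_\varphi$ as a lattice and then extract its greatest and least element by a monotone selection procedure. The starting point is Proposition~\ref{maxmin} applied with $\varphi_1=\varphi_2=\varphi$: it shows that whenever $u_1,u_2\in\cc_\varphi$ one has $u_1\vee u_2\in\cc_\varphi$ and $u_1\wedge u_2\in\cc_\varphi$, so $\cc_\varphi$ is closed under finite pointwise maxima and minima. To single out the maximal element I would fix a bounded, strictly increasing function $\psi\colon\mathbb R\to\mathbb R$, say $\psi=\arctan$, and set $s\coloneqq\sup_{u\in\cc_\varphi}\int_\Omega\psi(u)\,\mathrm d\mathcal L^{2n}$, which is finite since $\psi$ is bounded and $\mathcal L^{2n}(\Omega)<\infty$.

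Next I would choose $u_k\in\cc_\varphi$ with $\int_\Omega\psi(u_k)\,\mathrm d\mathcal L^{2n}\to s$ and replace them by the partial maxima $v_k\coloneqq u_1\vee\dots\vee u_k$, which still lie in $\cc_\varphi$ by the lattice property and form a nondecreasing sequence with $\int_\Omega\psi(v_k)\,\mathrm d\mathcal L^{2n}\to s$. The main obstacle is to produce a uniform $\BV$ bound on $(v_k)$, and this is exactly where linear growth enters: since every $v_k$ is a minimizer, $\mathcal G_{\varphi,\Omega}(v_k)$ equals the minimum value $m$, and the lower bounds in \eqref{crescita} and \eqref{crescitainfty} give
\[
\tfrac1C\Big(\textstyle\int_\Omega|\nabla v_k+\X^*|\,\mathrm d\mathcal L^{2n}+|D^sv_k|(\Omega)+\int_{\partial\Omega}|\varphi-(v_k)_{|\partial\Omega}|\,\mathrm d\mathcal H^{2n-1}\Big)\le m.
\]
As $\X^*$ is bounded on the bounded set $\Omega$, this controls $|Dv_k|(\Omega)$ together with $\|(v_k)_{|\partial\Omega}\|_{L^1(\partial\Omega)}$; a Poincaré inequality for $\BV$ functions on the Lipschitz domain $\Omega$ with trace control then yields a uniform bound on $\|v_k\|_{\BV(\Omega)}$. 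By $\BV$ compactness and monotonicity the whole sequence converges in $L^1(\Omega)$ and $\mathcal L^{2n}$-a.e.\ to some $\overline u\in\BV(\Omega)$.

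It remains to check that $\overline u$ is the maximal minimizer. Lower semicontinuity of $\mathcal G_{\varphi,\Omega}$ in the $L^1$ topology — which follows from \eqref{xcv} and the fact that $\mathcal G_{\Omega_0}$ is a relaxed functional, after extending each $v_k$ by $\Phi$ to $\Omega_0$ — gives $\mathcal G_{\varphi,\Omega}(\overline u)\le\liminf_k\mathcal G_{\varphi,\Omega}(v_k)=m$, so $\overline u\in\cc_\varphi$; moreover dominated convergence yields $\int_\Omega\psi(\overline u)\,\mathrm d\mathcal L^{2n}=s$. If some $u\in\cc_\varphi$ satisfied $\mathcal L^{2n}(\{u>\overline u\})>0$, then $u\vee\overline u\in\cc_\varphi$ and, $\psi$ being strictly increasing, $\int_\Omega\psi(u\vee\overline u)\,\mathrm d\mathcal L^{2n}>\int_\Omega\psi(\overline u)\,\mathrm d\mathcal L^{2n}=s$, contradicting the definition of $s$; hence $u\le\overline u$ a.e.\ for every $u\in\cc_\varphi$. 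The minimal element $\underline u$ is obtained symmetrically, working with the nonincreasing partial minima $u_1\wedge\dots\wedge u_k$ and minimizing $\int_\Omega\psi(u)\,\mathrm d\mathcal L^{2n}$ over $\cc_\varphi$. The delicate point throughout is the uniform $\BV$ estimate, since without it the monotone limit might fail to lie in $\BV(\Omega)$ and the lower semicontinuity argument would not apply.
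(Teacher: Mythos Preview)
Your proof is correct and follows essentially the same strategy as the paper: both establish a uniform $\BV$ bound on $\cc_\varphi$ via the coercivity estimates \eqref{crescita}--\eqref{crescitainfty}, invoke $\BV$ compactness together with the $L^1$ lower semicontinuity of $\area_{\varphi,\Omega}$ to stay inside $\cc_\varphi$, and conclude by the lattice property (Proposition~\ref{maxmin}) and a strict-monotonicity contradiction. The only real difference is in the packaging of the extremizing step: the paper first shows that $\cc_\varphi$ is $L^1$-\emph{compact} (bounded in $\BV$ plus lower semicontinuity gives closedness) and then directly maximizes the $L^1$-continuous linear functional $u\mapsto\int_\Omega u\,\ud\LL^{2n}$ over this compact set, whereas you maximize the bounded functional $\int_\Omega\arctan u$ by hand through an explicit monotone maximizing sequence $v_k=u_1\vee\dots\vee u_k$. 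Your bounded $\psi$ makes the supremum finite for free and dominated convergence is immediate, at the price of needing the monotone-convergence argument to upgrade subsequential $L^1$ convergence to convergence of the full sequence; the paper's route is slightly more economical since compactness plus continuity deliver the maximizer in one line, but both arguments rest on exactly the same ingredients.
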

\begin{proof}
	We start by proving that $\cc_\varphi$ is bounded in $\BV(\Omega)$. Define $J\coloneqq \min_{u\in BV(\Omega)}\area_{\varphi,\Omega}(u)<+\infty$. By \eqref{crescitainfty} and denoting by $\tilde{C}=\sup_{\Omega} |\X^{\ast}|$ we get
	\begin{equation}\label{stimavar}
	\begin{split}
	|D u|(\Omega) = & \int_\Omega |\nabla u|\ud\LL^{2n} + |D^su|(\Omega)\\
	\leq & C\int_{\Omega}g(\nabla u+\X^{\ast})\ud\LL^{2n} +\tilde C|\Omega| +\int_\Omega \ud |D^su|\\
	\leq& C\int_\Omega g(\nabla u+\X^{\ast})\ud\LL^{2n}+\tilde C|\Omega|+C\int_\Omega g^\infty\left(\frac{dD^su}{d|D^su|}\right)\ud|D^su|+C\int_{\partial \Omega} g^\infty((\varphi-u_{|\partial\Omega})\nu_\Omega)\ud\mathcal H^{2n-1}\\=& CJ+\tilde C|\Omega|, \qquad\forall u\in \cc_\varphi,
	\end{split}
	\end{equation}
	where $|\Omega|\coloneqq\LL^{2n}(\Omega)$.
	Moreover, by \cite[Theorem 1.28 and Remark 2.14]{Giusti} there exists $c=c(n)>0$ such that
	\[
	\begin{split}
	\norm{u}_{L^1(\Omega)} &\leq |\Omega|^{1/2n}\norm{u}_{L^{2n/(2n-1)}(\Omega)}\\
	&\leq  c\,|\Omega|^{1/2n}\left( |D u|(\Omega)+\int_{\partial\Omega}|u|    \ud\HH^{2n-1}\right)\\
	&\leq   c|\Omega|^{1/2n}\left( |D u|(\Omega)+\int_{\partial\Omega}|\varphi-u_{|\partial\Omega}|\ud\HH^{2n-1}  +\int_{\partial\Omega}|\varphi|\ud\HH^{2n-1}\right)\\
	&= c|\Omega|^{1/2n}\left( |Du|(\Omega)+\int_{\partial\Omega}|(\varphi -u_{|\partial\Omega})\nu_\Omega|\ud\HH^{2n-1}+\int_{\partial\Omega}|\varphi|\ud\HH^{2n-1}\right)\\
	&\leq c|\Omega|^{1/2n}\left( C\int_\Omega g(\nabla u+X^*)\ud\LL^{2n}+\tilde C|\Omega|+C\int_\Omega g^\infty\left(\frac{dD^su}{d|D^su|}\right)|D^su|(\Omega)\right. \\
	& \left.\hphantom{= c|\Omega|^{1/2n}}+C\int_{\partial\Omega}g^\infty((\varphi-u_{|\partial \Omega})\nu_\Omega)\ud \HH^{2n-1}+ \int_{\partial\Omega}|\varphi|\ud\HH^{2n-1}\right)\\
	&=  c|\Omega|^{1/2n}\left(CJ+\tilde C|\Omega|+\int_{\partial\Omega}|\varphi|\ud\HH^{2n-1}\right),\qquad \forall u\in \cc_\varphi,
	\end{split}
	\]
	where in the second last inequality we argued as in \eqref{stimavar}.
	This, together with \eqref{stimavar}, implies that $\cc_\varphi$ is bounded in $\BV(\Omega)$.
	
	Therefore, by \cite[Theorem 3.23]{AFP}, $\cc_\varphi$ is pre-compact in $L^1(\Omega)$, i.e., for every sequence $(u_h)$ in  $\cc_\varphi$ there exist $u\in \BV(\Omega)$ and a subsequence $(u_{h_k})$ such that $u_{h_k}\to u$ in $L^1(\Omega)$. By \eqref{xcv}, $\area_{\varphi,\Omega}$ is lower semicontinuous with respect to the $L^1$-convergence, hence we have also
	\[
	\area_{\varphi,\Omega}(u)\leq\liminf_{k\to\infty} \area_{\varphi,\Omega}(u_{h_k})=J,
	\]
	so that $u\in\cc_\varphi$. We have proved that $\cc_\varphi$ is compact in $L^1(\Omega)$. Now, the functional
	\[
	\BV(\Omega)\ni u\longmapsto I(u)\coloneqq\int_{\Omega} u\ \ud\LL^{2n}
	\]
	is continuous in $L^1(\Omega)$, hence it admits maximum $\overline{u}$ and minimum $\underline u$ in $\cc_\varphi$:
	let us prove that $\overline u,\underline u$ satisfy \eqref{min9} for any  $u\in \cc_\varphi$.
	
	Assume by contradiction there exists $u\in\cc_\varphi$ such that $\Omega'\coloneqq\{z\in\Omega: u(z)>\overline{u}(z)\}$ has strictly positive measure. Then, by Corollary \ref{maxmin}, $u\vee\overline u$ is in $\cc_\varphi$.
	Moreover
	\[
	\int_\Omega (u\vee\overline u)\ud\LL^{2n}=\int_{\Omega'} u\ud\LL^{2n}+\int_{\Omega\setminus\Omega'} \overline u\ud\LL^{2n}>\int_\Omega \overline u\ud\LL^{2n}
	\]
	yielding a contradiction. The fact that $u\geq \underline u$ follows in a similar way.
\end{proof}
The following result is a Comparison Principle inspired by the results obtained in \cite{MTccm} for superlinear functionals in Sobolev spaces and it can be proved exactly as in \cite[Theorem 4.5]{PSTV}.
\begin{theorem}\label{confro}
	Let $\Omega\subset\bbR^{2n}$ be a bounded open set with Lipschitz regular
	boundary; let $\varphi,\psi\in L^1(\partial\Omega)$ be such that
	$\varphi\leq \psi$ $\HH^{2n-1}$-a.e. on $\partial\Omega$. Consider
	the functions $\overline u,\,\underline u\in \cc_{\varphi}$ and
	$\overline w,\, \underline w\in \cc_{\psi}$ such that\footnote{The existence of $\overline u,\,\underline u,\,\overline w,\,\underline w$ is guaranteed by Proposition \ref{esistmax}.}
	\begin{equation}\label{uuww}
	\begin{array}{ll}
	\underline u \leq u\leq \overline u\qquad &\LL^{2n}\mbox{-a.e. in }\Omega,\ \forall u\in\cc_\varphi\\
	\underline w \leq w\leq \overline w\quad &\LL^{2n}\mbox{-a.e. in }\Omega,\ \forall w\in\cc_\psi\,.
	\end{array}
	\end{equation}
	Then
	\begin{equation}\label{AM}
	\overline u \leq\overline w\quad\text{and}\quad\underline
	u\leq\underline w\quad \LL^{2n}\mbox{-a.e. in}\ \Omega
	\end{equation}
	and, in particular,
	\[
	\begin{split}
	& u\leq \overline w\quad \LL^{2n}\mbox{-a.e. in}\ \Omega,\ \forall u\in \cc_{\varphi}\\
	& \underline u\leq w\quad \LL^{2n}\mbox{-a.e. in}\ \Omega,\ \forall w\in \cc_{\psi}.
	\end{split}
	\]
\end{theorem}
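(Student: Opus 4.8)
The plan is to derive the comparison principle purely from the lattice structure of the minimizer sets, which has already been established: the substantive analytic work is packaged in the submodularity inequality (Theorem \ref{stimaMIN} and Corollary \ref{lem:disbordo}) and its consequence Proposition \ref{maxmin}, while the existence of the extremal minimizers $\overline u,\underline u,\overline w,\underline w$ and their defining property \eqref{uuww} come from Proposition \ref{esistmax}. Thus I would not reprove anything about the structure of $\cc_\varphi$ and $\cc_\psi$, but only chain together the known facts.

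First I would prove $\overline u\leq\overline w$. Since $\varphi\leq\psi$ and $\overline u\in\cc_\varphi$, $\overline w\in\cc_\psi$, Proposition \ref{maxmin} (applied with $\varphi_1=\varphi$, $\varphi_2=\psi$) yields $\overline u\vee\overline w\in\cc_\psi$. By the maximality of $\overline w$ among the elements of $\cc_\psi$ expressed in \eqref{uuww}, we get $\overline u\vee\overline w\leq\overline w$ $\LL^{2n}$-a.e.\ in $\Omega$. As the reverse inequality $\overline u\vee\overline w\geq\overline w$ holds trivially, we conclude $\overline u\vee\overline w=\overline w$, that is $\overline u\leq\overline w$ $\LL^{2n}$-a.e.\ in $\Omega$. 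Symmetrically, Proposition \ref{maxmin} gives $\underline u\wedge\underline w\in\cc_\varphi$, and the minimality of $\underline u$ in $\cc_\varphi$ forces $\underline u\leq\underline u\wedge\underline w$; since trivially $\underline u\wedge\underline w\leq\underline u$, equality holds and hence $\underline u\leq\underline w$ $\LL^{2n}$-a.e.\ in $\Omega$. This establishes \eqref{AM}.

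The remaining two assertions then follow by simply transporting the extremality bounds \eqref{uuww} through \eqref{AM}: for any $u\in\cc_\varphi$ one has $u\leq\overline u\leq\overline w$ $\LL^{2n}$-a.e.\ in $\Omega$, and for any $w\in\cc_\psi$ one has $\underline u\leq\underline w\leq w$ $\LL^{2n}$-a.e.\ in $\Omega$.

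I expect no genuine obstacle in this step, since the difficulty of the comparison principle has been absorbed into Proposition \ref{maxmin}; the only point demanding care is the careful bookkeeping of the almost-everywhere inequalities and the correct matching of the hypothesis $\varphi\leq\psi$ with the order convention in Proposition \ref{maxmin}. For this reason I would present the argument concisely, exactly along the lines of \cite[Theorem 4.5]{PSTV}, rather than expanding it.
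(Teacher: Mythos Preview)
Your proposal is correct and follows exactly the approach the paper intends: the paper does not give its own proof but defers to \cite[Theorem 4.5]{PSTV}, and your argument---applying Proposition \ref{maxmin} to the pairs $(\overline u,\overline w)$ and $(\underline u,\underline w)$, then invoking the extremality in \eqref{uuww}---is precisely that argument.
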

%\begin{proof}
	%We have proved in Corollary \ref{maxmin} that $\overline
	%w\vee\overline u$ is a minimizer of $\area_{\psi}$ and
	%$\overline w\wedge \overline u$ is a minimizer of $\area_{\varphi}$. Assumption \eqref{uuww} then gives \eqref{AM}, which allows to conclude.
%\end{proof}
Upon observing that $\area_{\varphi+\alpha,\Omega}(u+\alpha)=\area_{\varphi,\Omega}(u)\quad\forall\: u\in \BV(\Omega)$, the following result can be proved exactly as in \cite[Corollary 4.6]{PSTV}.
\begin{corollary}\label{valbordo}
	Let $\Omega\subset\bbR^{2n}$ be a bounded open set with Lipschitz regular boundary and $\varphi,\psi\in L^{\infty }(\partial\Omega)$; let $\overline{u},\underline{u}\in \cc_{\varphi}$ and $\overline{w},\underline{w}\in \cc_{\psi}$ be as in \eqref{uuww}.
	Then, for every $\alpha\in\bbR$, one has
	\begin{equation}\label{traslaz}
	\begin{split}
	&\overline{u}+\alpha, \underline{u}+\alpha\in \cc_{\varphi+\alpha}\\
	&\underline{u}+\alpha\leq u\leq \overline{u}+\alpha\quad \LL^{2n}\mbox{-a.e. in}\ \Omega,\ \forall u\in\cc_{\varphi+\alpha}
	\end{split}
	\end{equation}
	and
	\begin{equation}\label{sup}
	\begin{split}
	& \norm{\overline{u}-\overline{w}}_{L^{\infty}(\Omega)}\leq\norm{\varphi-\psi}_{L^{\infty}(\partial\Omega)}\\
	& \norm{\underline{u}-\underline{w}}_{L^{\infty}(\Omega)}\leq\norm{\varphi-\psi}_{L^{\infty}(\partial\Omega)}.
	\end{split}
	\end{equation}
	In particular, the implications
	\begin{equation}\label{eqrevin}
	\begin{split}
	& \overline u_{|\partial\Omega}=\varphi,\ \overline w_{|\partial\Omega}=\psi\quad \Rightarrow\quad \norm{\overline{u}-\overline{w}}_{L^{\infty}(\Omega)}=\norm{\varphi-\psi}_{L^{\infty}(\partial\Omega)},\\
	& \underline u_{|\partial\Omega}=\varphi,\ \underline w_{|\partial\Omega}=\psi\quad \Rightarrow\quad \norm{\underline{u}-\underline{w}}_{L^{\infty}(\Omega)}=\norm{\varphi-\psi}_{L^{\infty}(\partial\Omega)}.
	\end{split}
	\end{equation}
	hold.
\end{corollary}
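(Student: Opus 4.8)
The plan is to exploit the vertical-translation invariance of the relaxed functional. First I would record that for every $\alpha\in\bbR$ and every $u\in\BV(\Omega)$ one has $\area_{\varphi+\alpha,\Omega}(u+\alpha)=\area_{\varphi,\Omega}(u)$: adding a constant leaves $\nabla u$ and $D^su$ unchanged and replaces the boundary discrepancy $\varphi-u_{|\partial\Omega}$ by $(\varphi+\alpha)-(u+\alpha)_{|\partial\Omega}=\varphi-u_{|\partial\Omega}$, so all three terms defining $\area_{\varphi,\Omega}$ are preserved. Consequently $u\mapsto u+\alpha$ is an order-preserving bijection of $\cc_\varphi$ onto $\cc_{\varphi+\alpha}$. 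This immediately yields the first line of \eqref{traslaz}, i.e.\ $\overline u+\alpha,\underline u+\alpha\in\cc_{\varphi+\alpha}$; and since by Proposition \ref{esistmax} the functions $\overline u,\underline u$ are the pointwise largest and smallest elements of $\cc_\varphi$ (as in \eqref{uuww}), order-preservation shows that $\overline u+\alpha$ and $\underline u+\alpha$ are the largest and smallest elements of $\cc_{\varphi+\alpha}$, which is the second line of \eqref{traslaz}.

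For the $L^\infty$ estimates \eqref{sup}, set $M\coloneqq\norm{\varphi-\psi}_{L^\infty(\partial\Omega)}$, so that $\varphi\le\psi+M$ and $\psi\le\varphi+M$ hold $\HH^{2n-1}$-a.e.\ on $\partial\Omega$. Applying the Comparison Principle (Theorem \ref{confro}) to the ordered pair $\varphi\le\psi+M$, and using the translation identity just proved to identify the maximal element of $\cc_{\psi+M}$ with $\overline w+M$, gives $\overline u\le\overline w+M$ a.e.\ in $\Omega$. Exchanging the roles of $\varphi$ and $\psi$ via $\psi\le\varphi+M$ yields $\overline w\le\overline u+M$. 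The two inequalities combine to $|\overline u-\overline w|\le M$ a.e., that is, the first line of \eqref{sup}; the identical argument applied to the minimal minimizers $\underline u,\underline w$ produces the second line.

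Finally, for the implications \eqref{eqrevin} I would observe that, once the traces are prescribed, the reverse inequality is automatic. If $\overline u_{|\partial\Omega}=\varphi$ and $\overline w_{|\partial\Omega}=\psi$, then by linearity of the trace operator the trace of $\overline u-\overline w$ equals $\varphi-\psi$, and since the $L^\infty$ norm of a boundary trace is dominated by the interior $L^\infty$ norm (a direct consequence of the defining approximate-limit property of the trace, because $|\overline u-\overline w|\le\norm{\overline u-\overline w}_{L^\infty(\Omega)}$ a.e.), we obtain $\norm{\varphi-\psi}_{L^\infty(\partial\Omega)}\le\norm{\overline u-\overline w}_{L^\infty(\Omega)}$; combined with \eqref{sup} this forces equality, and likewise for $\underline u,\underline w$. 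None of these steps is genuinely difficult: the two points deserving care are the verification that translation preserves the \emph{extremal} minimizer rather than merely \emph{some} minimizer, which rests on the pointwise characterization \eqref{uuww}, and the elementary trace-domination inequality invoked in the equality case.
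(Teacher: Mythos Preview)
Your proof is correct and follows precisely the approach the paper indicates: the paper merely records the translation identity $\area_{\varphi+\alpha,\Omega}(u+\alpha)=\area_{\varphi,\Omega}(u)$ and refers to \cite[Corollary 4.6]{PSTV}, whose argument is exactly the one you have written out---translation invariance gives \eqref{traslaz}, the Comparison Principle (Theorem \ref{confro}) applied to $\varphi\le\psi+M$ and $\psi\le\varphi+M$ gives \eqref{sup}, and the trace-domination observation yields \eqref{eqrevin}.
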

%\begin{proof}
	%The statements in \eqref{traslaz}  follow at once on noticing that
	%\[
	%\area_{\varphi+\alpha,\Omega}(u+\alpha)=\area_{\varphi,\Omega}(u)\quad\forall\: u\in \BV(\Omega)\,.
	%\]
	%%Let  $\alpha:=\norm{\varphi-\psi}_{L^{\infty}(\partial\Omega)}\in\bbR$, then
	%\[
	%\varphi\leq \psi+\alpha\quad \HH^{2n-1}\mbox{-a.e. in}\ \partial\Omega,
	%\]
	%and, by \eqref{traslaz} and Corollary \ref{confro}, we get
	%\[
	%\overline{u}\leq \overline{w}+\alpha\text{ and }\underline{u}\leq \underline{w}+\alpha\qquad \LL^{2n}\mbox{-a.e. in }\Omega.
%	\]
	%An analogous argument shows that
	%\[
	%\overline{w}\leq \overline{u}+\alpha\text{ and }\underline{w}\leq \underline{u}+\alpha\qquad \LL^{2n}\mbox{-a.e. in }\Omega,
	%\]
	%whence \eqref{sup}.
	
%	If the assumptions in \eqref{eqrevin} are satisfied, classical properties of traces\footnote{ci serve che l'operatore di traccia sia una contrazione da BV a $L^\infty$. Mettiamo una referenza?} ensure that the reverse inequalities in \eqref{sup} holds, and this gives the validity of the implications in \eqref{eqrevin}.
%\end{proof}
We recall below some notations introduced in \cite{PSTV}, that will be useful also in the proof of the main theorem of the present paper.
Given a subset $\Omega\subset\bbR^{2n}$, a function $u\colon\Omega\to\R$, a vector $\tau\in\R^{2n}$ and $\xi\in\R$ we set
\[
\begin{split}
& \Omega_{\tau}\coloneqq\{z\in\bbR^{2n}: z+\tau\in \Omega\}\\
& u_{\tau}(z)\coloneqq u(z+\tau),\quad z\in \Omega_\tau\\
& \atx{u}(z)\coloneqq u_{\tau}(z)+2\left\langle \tau^*, z\right\rangle+\xi,\quad z\in \Omega_\tau\,.
\end{split}
\]
It is easily seen that, given $\Omega$ open and $u\in \BV(\Omega)$, then both $u_\tau$ and $\atx u$ belong to $\BV(\Omega_\tau)$. Moreover, if $\Omega$ is bounded with Lipschitz regular boundary one has also
\begin{equation}\label{trennno}
(\atx u)_{|\partial(\Omega_\tau)}= (u_{|\partial\Omega})_{\tau}+2\left\langle \tau^*,\cdot\right\rangle+\xi=\atx{(u_{|\partial\Omega})}\,.
\end{equation}
\begin{oss}{\rm
The family of functions $\atx{u}$ has a precise meaning from the viewpoint of Heisenberg groups geometry. Indeed, it is a matter of computations to observe that the $t$-subgraph $E^t_{\atx u}$ of $\atx{u}$ coincides with the left translation $(-\tau,\xi)\cdot E^t_u$ (according to the group law) of the $t$-subgraph $E^t_u$ of $u$ by the element $(-\tau,\xi)\in \bbH^n$. We address the interested reader to \cite{PSTV,SCV} for further informations.
}\end{oss}

\begin{lemma}\label{lemutile}
Let $\Omega\subset\R^{2n}$ be a bounded open set with Lipschitz regular boundary, $\varphi\in L^1(\partial\Omega)$, $\tau\in\R^{2n}$ and $\xi\in\R$. Then
\[%begin{align}\label{areainv}
\area_{\atx{\varphi},\Omega_\tau}(\atx u)=\area_{\varphi,\Omega}(u),\quad\forall u\in \BV(\Omega)\,.
\]%end{align}
\end{lemma}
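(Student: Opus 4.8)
The plan is to decompose $\area_{\varphi,\Omega}(u)=\mathcal G_{\varphi,\Omega}(u)$ into its three pieces—the absolutely continuous bulk term $\int_\Omega g(\nabla u+\X^\ast)\,\ud\LL^{2n}$, the singular bulk term $\int_\Omega g^\infty(\ud D^su/\ud|D^su|)\,\ud|D^su|$, and the boundary term $\int_{\partial\Omega}g^\infty((\varphi-u_{|\partial\Omega})\nu_\Omega)\,\ud\HH^{2n-1}$—and to show that each is left unchanged when $(u,\varphi,\Omega)$ is replaced by $(\atx u,\atx\varphi,\Omega_\tau)$. Since $\Omega_\tau$ is just a translate of $\Omega$ (hence again bounded with Lipschitz boundary), and since the translation $z\mapsto z+\tau$ preserves both $\LL^{2n}$ and $\HH^{2n-1}$ and carries $\partial\Omega_\tau$ onto $\partial\Omega$ with $\nu_{\Omega_\tau}(z)=\nu_\Omega(z+\tau)$, each identity will follow from a pointwise identity for the integrand together with the change of variables $w=z+\tau$.

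The computational heart is the behaviour of the affine part. Differentiating $\atx u(z)=u(z+\tau)+2\langle\tau^\ast,z\rangle+\xi$ gives $\nabla(\atx u)(z)=(\nabla u)(z+\tau)+2\tau^\ast$, and since $\X^\ast(z)=2z^\ast$ with $z\mapsto z^\ast$ linear, one obtains
\[
\nabla(\atx u)(z)+\X^\ast(z)=(\nabla u)(z+\tau)+2\tau^\ast+2z^\ast=(\nabla u)(z+\tau)+\X^\ast(z+\tau)=(\nabla u+\X^\ast)(z+\tau).
\]
Thus $g(\nabla(\atx u)+\X^\ast)$ on $\Omega_\tau$ equals $g(\nabla u+\X^\ast)$ precomposed with the translation, and integrating and changing variables matches the bulk term on $\Omega$. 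Note that the correction $2\langle\tau^\ast,z\rangle$ in the definition of $\atx u$ is present precisely so that its gradient $2\tau^\ast$ cancels the shift $\X^\ast(z+\tau)-\X^\ast(z)=2\tau^\ast$; this cancellation is the one genuinely special point of the argument.

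For the singular term I would use that $\atx u$ and $u_\tau$ differ only by the smooth affine function $z\mapsto 2\langle\tau^\ast,z\rangle+\xi$, so that $D^s(\atx u)=D^s(u_\tau)$, and that $D^s(u_\tau)$ is the push-forward of $D^su$ under the translation; hence $|D^s(\atx u)|$ is the translate of $|D^su|$ and the polar vector $\ud D^s(\atx u)/\ud|D^s(\atx u)|$ equals $(\ud D^su/\ud|D^su|)(\cdot+\tau)$. As $g^\infty$ does not depend on the base point, a change of variables again matches the singular bulk terms. For the boundary term I would invoke \eqref{trennno}, which gives $(\atx u)_{|\partial\Omega_\tau}=\atx{(u_{|\partial\Omega})}$, together with the analogous definition $\atx\varphi=\varphi_\tau+2\langle\tau^\ast,\cdot\rangle+\xi$; subtracting, the two affine corrections cancel and $\atx\varphi-(\atx u)_{|\partial\Omega_\tau}=(\varphi-u_{|\partial\Omega})_\tau$, so the boundary integrand transforms by translation and the change of variables closes the argument. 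The only point requiring genuine care is the rigorous justification, from the standard BV calculus, that $D^s$ is insensitive to adding an affine function and transforms as a push-forward under translation (so that both $|D^su|$ and its polar vector behave as claimed); the rest is bookkeeping with translation-invariant measures.
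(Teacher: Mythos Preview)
Your proposal is correct and follows essentially the same route as the paper: decompose $\area_{\varphi,\Omega}$ into its absolutely continuous, singular, and boundary pieces, use the push-forward of $Du$ under the translation together with the key identity $2\tau^\ast+\X^\ast(z)=\X^\ast(z+\tau)$ to handle the bulk terms, and invoke \eqref{trennno} for the trace so that the affine corrections cancel on the boundary. The paper packages the translation via the push-forward map $\ell_{\tau\#}$ with $\ell_\tau(z)=z-\tau$, but the substance is identical to what you wrote.
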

\begin{proof}
Using e.g. \cite[Remark 3.18]{AFP}, we get $D u_\tau=\ell_{\tau\#}(Du)$, where $\ell_\tau$ is the translation $z\mapsto z-\tau$ and $\ell_{\tau\#}$ denotes the push-forward of measures via $\ell_\tau$. In particular
\[
\nabla u_\tau=(\nabla u)_\tau=\nabla u\circ\ell_\tau^{-1}, \quad  D^s u_\tau=\ell_{\tau\#}(D^su)\quad \mbox{and}\quad \frac{dD^s u_{\tau}}{d|D^s u_{\tau}|}=\frac{dD^s u}{d|D^s u|}\circ \ell_\tau^{-1}
\]
hence
\[
D\atx u = \big(\nabla u\circ\ell_\tau^{-1}+2\tau^\ast\big)\LL^{2n} + \ell_{\tau\#}(D^su)\,.
\]
Therefore
\[
\begin{split}
& \area_{\atx{\varphi},\Omega_\tau}(\atx u)\\
= & \int_{\Omega_\tau} g((\nabla u\circ\ell_\tau^{-1})+2\tau^\ast+\xva)\ud\LL^{2n} +\int_{\Omega_\tau} g^{\infty}\left( \frac{d D^s u}{d |D^s u|}\circ \ell_{\tau}^{-1}\right)\, \ud|\ell_{\tau\#}(D^su)|\\&+ \int_{\partial\Omega_\tau}g^{\infty}((\atx\varphi-(\atx u)_{|\partial\Omega_\tau})\nu_{\Omega_\tau})\ud\HH^{2n-1}.
\end{split}
\]
We now use \eqref{trennno} and the equality
\[
2\tau^\ast+\xva(z) = 2(\tau+z)^\ast = (\xvett^\ast\circ \ell_\tau^{-1})(z),\quad\forall z\in\R^{2n}
\]
to get, with a change of variable,
\[
\begin{split}
& \area_{\atx{\varphi},\Omega_\tau}(\atx u)\\
= & \int_{\Omega_\tau} |\nabla u+\xva|\circ\ell_\tau^{-1}\ud\LL^{2n} + \int_{\Omega_\tau} g^{\infty}\left( \frac{d D^s u}{d |D^s u|}\circ \ell_{\tau}^{-1}\right)\, \ud|\ell_{\tau\#}(D^su)|+ \int_{\partial\Omega_\tau}g^{\infty}\left(\big(\varphi-u_{|\partial\Omega}\big)_\tau\nu_{\Omega_{\tau}}\right)\ud\HH^{2n-1}\\
=& \int_\Omega |\nabla u + \xva|\ud\LL^{2n} + \int_{\Omega} g^{\infty}\left( \frac{d D^s u}{d |D^s u|}\right)\, \ud|D^su| + \int_{\partial\Omega}g^{\infty}((\varphi-u_{|\partial\Omega})\nu_{\Omega})\ud\HH^{2n-1}\\
= & \area_{\varphi,\Omega}(u)\,.\qedhere
\end{split}
\]
\end{proof}

\begin{corollary}\label{tildemax}
If the same assumptions of Lemma \ref{lemutile} hold and if $\overline{u}$ and $\underline{u}$ are as in Proposition \ref{esistmax}, then $\atx{(\overline{u})},\atx{(\underline{u})}\in \cc_{\atx{\varphi}}$ and
\[
\atx{(\overline{u})} \leq u\leq \atx{(\underline{u})}\quad \LL^{2n}\mbox{-a.e. in }\Omega_{\tau}, \forall u\in \cc_{\atx{\varphi}}\,.
\]
\end{corollary}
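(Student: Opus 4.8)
The plan is to transfer everything through the transformation $\atx{\cdot}$, using the invariance recorded in Lemma \ref{lemutile}, in the same spirit in which the vertical shift $u\mapsto u+\alpha$ was used to prove Corollary \ref{valbordo}. First I would observe that $\atx{\cdot}\colon\BV(\Omega)\to\BV(\Omega_\tau)$ is a bijection whose inverse sends $v\in\BV(\Omega_\tau)$ to the function $z\mapsto v(z-\tau)-2\langle\tau^\ast,z-\tau\rangle-\xi$, as is checked directly from the definition $\atx{u}(z)=u(z+\tau)+2\langle\tau^\ast,z\rangle+\xi$. By Lemma \ref{lemutile} one has $\area_{\atx{\varphi},\Omega_\tau}(\atx u)=\area_{\varphi,\Omega}(u)$ for every $u\in\BV(\Omega)$, while by \eqref{trennno} the trace of $\atx u$ on $\partial\Omega_\tau$ equals $\atx{\varphi}$ precisely when the trace of $u$ on $\partial\Omega$ equals $\varphi$. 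Hence this bijection carries the minimization problem for $\varphi$ on $\Omega$ onto the one for $\atx{\varphi}$ on $\Omega_\tau$, so that $u\in\cc_\varphi$ if and only if $\atx u\in\cc_{\atx{\varphi}}$; in particular $\atx{(\overline u)},\atx{(\underline u)}\in\cc_{\atx{\varphi}}$, which gives the first assertion.

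For the two-sided bound I would transport the extremality of Proposition \ref{esistmax}. The map $\atx{\cdot}$ is monotone, being the composition of the domain translation $z\mapsto z+\tau$ with the addition of the fixed function $2\langle\tau^\ast,\cdot\rangle+\xi$; together with bijectivity this forces it to send the maximal minimizer $\overline u$ and the minimal minimizer $\underline u$ for $\varphi$ to, respectively, the maximal minimizer $\atx{(\overline u)}$ and the minimal minimizer $\atx{(\underline u)}$ for $\atx{\varphi}$. Thus, taking any $v\in\cc_{\atx{\varphi}}$, writing $v=\atx u$ with $u\in\cc_\varphi$ and applying $\atx{\cdot}$ to the chain $\underline u\le u\le\overline u$ of Proposition \ref{esistmax}, I would transport it from $\Omega$ to $\Omega_\tau$ and obtain the asserted two-sided bound for every such $v$.

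The only step that is not purely formal is the transfer of $\LL^{2n}$-a.e.\ inequalities under $\atx{\cdot}$. Since $\atx{\cdot}$ precomposes with the $\LL^{2n}$-preserving translation $z\mapsto z+\tau$ and then adds a pointwise-defined function, an inequality valid off an $\LL^{2n}$-null subset of $\Omega$ becomes one valid off an $\LL^{2n}$-null subset of $\Omega_\tau$; combined with the explicit inverse written above and with the trace identity \eqref{trennno}, this makes the transport of both the functional value and the ordering rigorous. Everything else is a direct combination of Lemma \ref{lemutile} and Proposition \ref{esistmax}.
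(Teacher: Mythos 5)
Your proof is correct and is essentially the argument the paper intends: the corollary is stated there without a separate proof, as an immediate consequence of Lemma \ref{lemutile} together with Proposition \ref{esistmax}, exactly via your observation that $u\mapsto \atx u$ is a functional-preserving, order-preserving bijection of $\BV(\Omega)$ onto $\BV(\Omega_\tau)$ whose inverse is the transform of the same family with parameters $(-\tau,-\xi)$ (note $\langle\tau^\ast,\tau\rangle=0$, so your two formulas for the inverse agree), so that $\cc_\varphi$ is carried bijectively onto $\cc_{\atx{\varphi}}$ and extremal minimizers to extremal minimizers. One caveat: transporting $\underline u\le u\le \overline u$ yields $\atx{(\underline u)}\le v\le \atx{(\overline u)}$ for every $v\in\cc_{\atx{\varphi}}$, which is the \emph{reverse} of the inequality displayed in the corollary; the paper's display evidently interchanges $\overline u$ and $\underline u$ (a typo, consistent with how the corollary is later combined with Corollary \ref{valbordo} in the proof of Theorem \ref{mainteo-ex6.7}), so your chain proves the intended statement, but you should flag the swap rather than calling the printed inequality ``the asserted bound''.
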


The next proposition states that, whenever we fix an affine boundary datum $L$, the functional $\mathcal{G}_{L,\Omega}$ admits as unique minimizer the function $L$ itself. %In the proof of Proposition \ref{propmin2} below we will make use of the following approximation result.

\begin{proposition}\label{propmin2}
Let $L\colon \R^{2n}\to\R$ be given by $L(z):=\langle a,z\rangle +b$ with $a\in \R^{2n}$ and $b\in\R$ and assume $g$ satisfies assumptions (A) and (B). Then $L$ is the unique solution of the problem
\begin{equation}\label{minc}
\min\{\mathcal{G}_{L,\Omega}(u): u\in BV(\Omega)\}.
\end{equation}
\end{proposition}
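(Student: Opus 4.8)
The plan is to normalise the datum to a constant, recast the problem on a ball so that the prescribed boundary value is absorbed into an interior jump, and then run a calibration argument whose equality cases—read through (A) and (B)—force the derivative of any competitor to be parallel to $\X^\ast$, so that Lemma \ref{0hom} closes the argument. Writing $L(z)=\langle a,z\rangle+b$, choose $\tau:=\tfrac12 a^\ast$. Since $z^{\ast\ast}=-z$ one has $2\tau^\ast=(a^\ast)^\ast=-a$, so
\[
\atx L(z)=L(z+\tau)+2\langle\tau^\ast,z\rangle+\xi=\langle a+2\tau^\ast,z\rangle+\big(\langle a,\tau\rangle+b+\xi\big)
\]
is a constant function on $\Omega_\tau$. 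By Lemma \ref{lemutile} and Corollary \ref{tildemax} the map $u\mapsto\atx u$ preserves the functional value and carries $\cc_L$ bijectively onto $\cc_{\atx L}$, while Corollary \ref{valbordo} lets us subtract the additive constant. Hence it suffices to prove that $0$ is the unique minimiser of $\mathcal G_{0,\Omega}$ on an arbitrary bounded Lipschitz domain $\Omega$.

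\emph{Passage to a ball and minimality.} Fix a ball $B_R(0)\Supset\Omega$ and, using the relaxation identity \eqref{xcv} with $\Omega_0=B_R(0)$ and $\Phi\equiv0$, identify minimisers $u$ of $\mathcal G_{0,\Omega}$ with minimisers $\tilde u$ (the extension of $u$ by $0$) of $\mathcal G_{B_R(0)}$ in the class $\{v\in BV(B_R(0)):v=0\text{ on }B_R(0)\setminus\overline\Omega\}$, to which $0$ belongs. Set $h(z):=g(\X^\ast(z))$ and let $\sigma(z):=Dg(\X^\ast(z))\in\partial g(\X^\ast(z))$, a bounded, $\LL^{2n}$-a.e.\ defined selection (recall $z\mapsto\X^\ast(z)=2z^\ast$ is a linear isomorphism). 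As $h$ is convex and $\sigma=\tfrac12(\nabla h)^\ast$, the contraction of the symmetric Hessian measure of $h$ with the skew map $\ast$ vanishes, so $\dive\sigma=0$ distributionally, in accordance with Proposition \ref{starop}(iii). For any $v$ in the class one has $v=0$ near $\partial B_R(0)$; combining the subgradient inequality $g(\X^\ast+\nabla v)\ge g(\X^\ast)+\langle\sigma,\nabla v\rangle$, the bound $g^\infty(p)\ge\langle\sigma,p\rangle$ applied to the polar vector of $D^sv$, and the Gauss--Green formula \eqref{defvartr} (whose boundary term vanishes and whose bulk term is $-\int v\,\dive\sigma=0$) yields
\[
\mathcal G_{B_R(0)}(v)\ \ge\ \int_{B_R(0)}g(\X^\ast)\,\ud\LL^{2n}=\mathcal G_{B_R(0)}(0),
\]
so $0$ is a minimiser.

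\emph{Equality cases and conclusion.} Since $\tilde u$ is also a minimiser, both inequalities above are equalities, $\LL^{2n}$- and $|D^s\tilde u|$-a.e.\ respectively. In the bulk $\sigma$ is then a common subgradient of $g$ at $\X^\ast$ and at $\X^\ast+\nabla\tilde u$, so $g$ is affine on the joining segment; the resulting midpoint identity together with (A) gives $\nabla\tilde u+\X^\ast=\mu\,\X^\ast$, i.e.\ $\nabla\tilde u\parallel\X^\ast$ a.e. On the singular part the equality $g^\infty\big(\tfrac{dD^s\tilde u}{d|D^s\tilde u|}\big)=\big\langle\sigma,\tfrac{dD^s\tilde u}{d|D^s\tilde u|}\big\rangle$ is exactly the hypothesis of (B) with $\xi_2=\X^\ast$ and $p=\sigma$, whence $\tfrac{dD^s\tilde u}{d|D^s\tilde u|}=\lambda\,\X^\ast$; crucially, $D^s\tilde u$ includes both the interior singular part of $u$ and the jump of $\tilde u$ across $\partial\Omega$. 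Therefore $\tfrac{dD\tilde u}{d|D\tilde u|}=\lambda\,\X^\ast$ holds $|D\tilde u|$-a.e.\ on $B_R(0)$, while $\tilde u=0$ on $\partial B_R(0)$, so Lemma \ref{0hom} forces $\tilde u\equiv0$. Hence $u=0$, and undoing the two reductions gives $u=L$.

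\emph{Main obstacle.} The delicate point is the calibration step for non-smooth $g$: one must make sense of, and justify, the Gauss--Green pairing \eqref{defvartr} between the merely bounded, distributionally divergence-free field $\sigma$ and the measure $Dv$, and then separate the single energy identity $\mathcal G_{B_R(0)}(\tilde u)=\mathcal G_{B_R(0)}(0)$ into the two pointwise equalities that feed (A) and (B). The conceptual gain of passing to $B_R(0)$ is that the datum prescribed on $\partial\Omega$ becomes part of $D^s\tilde u$, so that a single application of (B) to the singular term already forces the $\partial\Omega$-jump to be parallel to $\X^\ast$, which is precisely the hypothesis required by Lemma \ref{0hom}.
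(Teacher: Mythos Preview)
Your strategy coincides with the paper's: reduce the affine datum to zero via the translation $\tau=\tfrac12 a^\ast$ (the paper's Step~6), run a calibration with a divergence-free subgradient field of $g\circ\X^\ast$ to establish minimality of $0$, read off the equality cases through (A) and (B), and close with Lemma~\ref{0hom}. Your reorganisation---extending by zero to a large ball \emph{before} calibrating, so that the $\partial\Omega$-penalty is absorbed into $D^s\tilde u$ and the trace on $\partial B_R$ is automatically zero---is a genuine streamlining of the paper's Steps~4--5, where the vanishing of the trace on $\partial B_R$ is instead deduced separately from (B) together with the orthogonality $\X^\ast\perp\nu_{B_R}$ on the sphere. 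Your route to the midpoint identity (common subgradient $\Rightarrow$ affinity on the segment) is a valid alternative to the paper's direct use of $u/2$ as a competitor.

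The one point you correctly flag as incomplete is precisely where the paper invests its technical effort (Step~1): the integration-by-parts identity for the non-smooth calibration field. Your observation that $\dive\sigma=0$ distributionally (skew contraction of the symmetric Hessian measure of the convex function $h=g\circ\X^\ast$) is sound, but the Gauss--Green formula \eqref{defvartr} is stated only for $C^1_c$ test fields, so pairing a merely $L^\infty$ field $\sigma$ against the measure $Dv$ requires either an Anzellotti-type theory or an approximation. The paper takes the latter route: it approximates $g$ by its Moreau--Yosida regularisations $g_\lambda\in C^{1,1}$, for which $\dive\nabla g_\lambda(\X^\ast)=0$ pointwise and \eqref{defvartr} applies directly, and then passes to the limit using $\nabla g_\lambda\to\partial^0 g$ together with the uniform bound $\|\partial^0 g\|\le C$ coming from linear growth. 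Once \eqref{key0} is in hand, your separation of the single energy identity into the two pointwise equalities feeding (A) and (B) is exactly right, since both the bulk and singular inequalities are pointwise and their integrals sum to the total defect.
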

\begin{proof} 
We divide the proof in several steps.

\vspace{1em}
\noindent{\sl Step 1.} We claim there exists $p\colon \R^{2n}\to \R^{2n}$ such that $p(z) \in \partial g(z)$ for any $z\in \R^{2n}$ and with the property that
\begin{equation}\label{key0}
\int_\Omega \langle p(\X^*),\sigma_u\rangle\,\ud|Du|=\int_{\partial \Omega} u_{|\partial\Omega} \langle p(\X^*),\nu_\Omega\rangle\,\ud\mathcal H^{2n-1},
\end{equation}
for any $u\in BV(\Omega)$. 
If $g \in C^2(\R^{2n})$ formula \eqref{key0} with $p=\nabla g$ follows using the Gauss-Green formula and the fact that, since ${\rm div}\, \X^*=0$, also  ${\rm div} \nabla g(\X^*)=0$. We claim that \eqref{key0} holds true again with $p=\nabla g$ if $g\in C^1(\R^{2n})$. Consider the convolutions $g_h\coloneqq\rho_h*g$ where $\rho_h$ is a convolution kernel, i.e.\ $\rho_h\in C_c^\infty(B(0,1/h))$, $\rho_h\geq0$ and $\int_{\mathbb{R}^{2n}}\rho_h=1$. Then $g_h\in C^\infty(\R^{2n})$ and $\nabla g_h\to \nabla g$ uniformly on compact sets. It is now sufficient to pass to the limit in 
\[
\int_\Omega \langle \nabla g_h(\X^*),\sigma_u\rangle\,\ud|Du|=\int_{\partial \Omega} u_{|\partial\Omega} \langle \nabla g_h(\X^*),\nu_\Omega\rangle\,\ud\mathcal H^{2n-1}
\]
using the Dominated Convergence Theorem.
Finally we prove that \eqref{key0} holds true for any convex function $g\colon\mathbb{R}^{2n}\to \mathbb R$ and for a suitable choice of $p$. We are going to use the Yosida approximation; see \cite[Sec.\,IV.1]{S}  (see also \cite[Theorem 2.1]{AA}) for details. Precisely, for any $\lambda>0$ and for any $z\in \R^{2n}$ let 
\[
J_{\lambda}(z)=\min_{y\in\R^{2n}}\left\{\frac{1}{2\lambda}\|y-z\|^2+g(y)\right\},
\]
and
\[
g_{\lambda}(z)=g(J_{\lambda}(z))+\frac{1}{2\lambda}\|z-J_{\lambda}(z)\|^2
\]
Then $g_{\lambda}\in C^{1,1}(\R^{2n})$ and for any $z\in\mathbb{R}^{2n}$ there holds $\nabla f_{\lambda}(z)= A_{\lambda}(z)$ where $A_{\lambda}$ is the Yosida approximation of the maximal monotone operator $A=\partial g$, $A_{\lambda}(z)\coloneqq\lambda^{-1}(z-J_{\lambda}(z))$. Moreover, as $\lambda$ decreases to zero, $g_{\lambda}$ increases to $g$, and for any $z\in\mathbb{R}^{2n}$, $\| A_{\lambda}(z)\|\to \|\partial^0 g(z)\|$ and $A_{\lambda}(z)\to \partial^0 g(z)$, where $\partial^0 g(z)$ denotes the element of minimal norm of the closed convex set $\partial g(z)$. Finally, since $g$ has linear growth we have $\|\partial^0 g(z)\|\le c$ for some $c>0$ and for every $z\in\mathbb{R}^{2n}$.
The thesis now follows by taking  $p\colon \R^{2n}\to \R^{2n}$ defined by $p(z)\coloneqq\partial g^{0}(z)$ and using the Dominated Convergence Theorem to pass to the limit in 
\[
\int_\Omega \langle A_{\lambda}(\X^*),\sigma_u\rangle\,\ud|Du|=\int_{\partial \Omega} u_{|\partial\Omega} \langle A_{\lambda}(\X^*),\nu_\Omega\rangle\,\ud\mathcal H^{2n-1}
\]
as $\lambda\to 0$, obtaining \eqref{key0}.

\vspace{1em}
\noindent{\sl Step 2.} We claim that for any $w,z\in\R^{2n}$ we have
\begin{equation}\label{ginf}
g^{\infty}(w) \ge \langle p(\X^*(z)),w\rangle.
\end{equation}
Indeed, by convexity, for any $t>0$
\begin{equation*}
\frac{g(tw+\X^*(z))}{t}\ge \frac{g(\X^*(z))}{t}+\langle p(\X^*(z)),w\rangle,
\end{equation*}
and the conclusion follows letting $ t \to \infty$ and using Remark \ref{remginf}.

\vspace{1em}
\noindent{\sl Step 3.} We claim that $u=0$ is a solution of the problem
\[
\min\{\mathcal G_{0,\Omega}(u):  u\in BV(\Omega)\}.
\]
Let $u\in BV(\Omega)$. Combining the convexity of $g$ with \eqref{key0} and \eqref{ginf} we obtain
\begin{equation}\label{key4}
\begin{aligned}
\mathcal G_{0,\Omega}(u)&\ge\int_\Omega g(\X^\ast)\ud\LL^{2n}+\int_\Omega \langle p(\X^\ast),\nabla u\rangle\ud\LL^{2n}+\int_\Omega \langle p(\X^\ast),\frac{dD^su}{d|D^su|}\rangle\,\ud|D^su|\\
&\qquad +\int_{\partial\Omega}g^\infty(-u_{|\partial\Omega}\nu_\Omega)\,\ud\mathcal H^{2n-1}\\
&=\int_\Omega g(\X^\ast)\ud\LL^{2n}+\int_\Omega \langle p(\X^\ast),\sigma_u\rangle\,\ud|Du|+\int_{\partial\Omega}g^\infty(-u_{|\partial\Omega}\nu_\Omega)\,\ud\mathcal H^{2n-1}\\
&\ge \int_\Omega g(\X^\ast)\ud\LL^{2n}+\int_{\partial \Omega} u_{|\partial\Omega}\langle p(\X^*),\nu_\Omega\rangle\,\ud\mathcal H^{2n-1}-\int_{\partial \Omega} u_{|\partial\Omega}\langle p(\X^*),\nu_\Omega\rangle\,\ud\mathcal H^{2n-1}\\
&=\int_\Omega g(\X^\ast)\ud\LL^{2n}\\
&=\mathcal G_{0,\Omega}(0)
\end{aligned}
\end{equation}
which ends the proof of the minimality of $u=0$.

\vspace{1em}
\noindent{\sl Step 4.} We claim now that if $\Omega=B_R(0)$ then $u=0$ is the unique solution of the problem
\[
\min\{\mathcal G_{0,\Omega}(u) : u\in BV(\Omega)\}.
\]
Let $u\in BV(\Omega)$ be another minimizer, i.e. $\mathcal G_{0,\Omega}(u)=\mathcal G_{0,\Omega}(0)=m$. By convexity we have
\[
\begin{aligned}
m&\le \mathcal G_{0,\Omega}\left(\frac{u}{2}\right)\\
&=\int_\Omega g\left(\frac{1}{2}\nabla u+\X^\ast\right)\ud\LL^{2n}+\frac{1}{2}\int_\Omega g^\infty\left(\frac{dD^su}{d|D^su|}\right)\,\ud|D^su|+\frac{1}{2}\int_{\partial\Omega}g^\infty(-u_{|\partial\Omega}\nu_\Omega)\,\ud\mathcal H^{2n-1}\\
&\le \frac{1}{2}\int_\Omega g(\nabla u+\X^\ast)\ud\LL^{2n}+\frac{1}{2}\int_\Omega g(\X^\ast)\ud\LL^{2n}+\frac{1}{2}\int_\Omega g^\infty\left(\frac{dD^su}{d|D^su|}\right)\,\ud|D^su|\\
&\qquad +\frac{1}{2}\int_{\partial\Omega}g^\infty(-u_{|\partial\Omega}\nu_\Omega)\,\ud\mathcal H^{2n-1}\\
&=\frac{1}{2}m+\frac{1}{2}m=m.
\end{aligned}
\]
As a consequence we get
\[
g\left(\frac{\nabla u+\X^\ast}{2}+\frac{\X^*}{2}\right)=\frac{g(\nabla u+\X^\ast)+g(\X^*)}{2}, \quad \text{$\mathcal L^{2n}$-a.e.\,on $\Omega$}.
\]
Using assumption (A), we conclude that
\[
\nabla u=\lambda^a \X^*, \quad \text{$\mathcal L^{2n}$-a.e.\,on $\Omega$}
\]
for some measurable function $\lambda^a\colon \Omega \to \R$. Rewriting \eqref{key4} and using \eqref{ginf} we then obtain
\[
\begin{aligned}
m&=\int_\Omega g\left(\nabla u+\X^\ast\right)\ud\LL^{2n}+\int_\Omega g^\infty\left(\frac{dD^su}{d|D^su|}\right)\,\ud|D^su|+\int_{\partial\Omega}g^\infty(-u_{|\partial\Omega}\nu_\Omega)\,\ud\mathcal H^{2n-1}\\
&\ge\int_\Omega g(\nabla u+\X^\ast)\ud\LL^{2n}+\int_\Omega \left\langle p(\X^\ast),\frac{dD^su}{d|D^su|}\right\rangle\,\ud|D^su|+\int_{\partial\Omega}g^\infty(-u_{|\partial\Omega}\nu_\Omega)\,\ud\mathcal H^{2n-1}\\
&\ge \int_\Omega g(\X^\ast)\ud\LL^{2n}+\int_{\partial \Omega} (u_{|\partial\Omega}\langle p(\X^*),\nu_\Omega\rangle+g^\infty(-u_{|\partial\Omega}\nu_\Omega))\,\ud\mathcal H^{2n-1}\\
&\ge\int_\Omega g(\X^\ast)\ud\LL^{2n}\\
&\qquad =m.
\end{aligned}
\]
This means that
\begin{equation}\label{key6}
g^\infty\left(\frac{dD^su}{d|D^su|}\right)=\left\langle p(\X^\ast),\frac{dD^su}{d|D^su|}\right\rangle, \quad \text{$|D^su|$-a.e.\,on $\Omega$}
\end{equation}
and
\begin{equation}\label{key7}
u_{|\partial\Omega}\langle p(\X^*),\nu_\Omega\rangle+g^\infty(-u_{|\partial\Omega}\nu_\Omega)=0, \quad \text{$\mathcal H^{2n-1}$-a.e.\,on $\Omega$}.
\end{equation}
Combining assumption (B) with \eqref{key6}, we immediately deduce that
\[
\frac{dD^su}{d|D^su|}=\lambda^s \frac{\X^*}{|\X^*|}, \quad \text{$|D^su|$-a.e.\,on $\Omega$}
\]
for some measurable function $\lambda^s\colon \Omega \to \R$. From \eqref{key7} we get $u_{|\partial\Omega}=0$. Indeed, at any point of $\partial\Omega$ where $u_{\partial \Omega}>0$, condition \eqref{key7} implies
\[
g^\infty(-\nu_\Omega)=\langle p(\X^*),-\nu_\Omega\rangle
\]
which means, thanks to assumption (B), that $\nu_\Omega$ is parallel to $\X^*$, and this is impossible since $\Omega=B_R(0)$, namely $\X^*\perp \nu_\Omega$ everywhere on $\partial\Omega$. By means of the same argument we can also exclude $u_{|\partial\Omega}<0$. Therefore, we can say that
\[
\sigma_u=\lambda \X^*, \quad \text{$|Du|$-a.e.\,on $\Omega$}
\]
for some measurable function $\lambda\colon \Omega \to \R$. Lemma \eqref{0hom} gives the conclusion.

\vspace{1em}
\noindent{\sl Step 5.} Now we prove that $u=0$ is the unique solution of the problem
\[
\min\{\mathcal G_{0,\Omega}(u) : u\in BV(\Omega)\}
\]
for a general $\Omega$. Indeed, let $u\in BV(\Omega)$ be such that $\mathcal G_{0,\Omega}(u)=\mathcal G_{0,\Omega}(0)$. Let $R>0$ be such that $\Omega \subset\subset B_R(0)$. Let $u_0\colon B_R(0) \to \R$ be given by
\[
u_0(z)\coloneqq\left\{\begin{array}{ll}
u(z) & \textrm{if $z \in \Omega$}\\
0 & \textrm{otherwise.}\end{array}\right.
\]
Then,
\[
\begin{aligned}
\mathcal G_{0,B_R(0)}(u_0)&=\int_\Omega g(\nabla u+\X^*)\ud\LL^{2n}+\int_\Omega g^\infty\left(\frac{dD^su}{d|D^su|}\right)\,\ud|D^su|+\int_{\partial\Omega}g^\infty(-u_{|\partial\Omega}\nu_\Omega)\,\ud\mathcal H^{2n-1}\\
&\qquad +\int_{B_R(0)\setminus \overline \Omega}g(\X^*)\ud\LL^{2n}%+\int_{\partial\Omega}g^\infty\left(\frac{dD^su_0}{d|D^su_0|}\right)\,d|D^su_0|
\\
&=\mathcal G_{0,\Omega}(u)+\mathcal G_{0,B_R(0)\setminus \overline\Omega}(0)\\
&=\mathcal G_{0,B_R(0)}(0),
\end{aligned}
\]
Where in the last equality we used $\mathcal G_{0,\Omega}(u)=\mathcal G_{0,\Omega}(0)$. Hence,  by step 3 we get $u_0=0$ from which the conclusion.

\vspace{1em}
\noindent{\sl Step 6.} We conclude the proof proving that $u=L$ is the unique solution of the problem
\[
\min\{\G_{L,\Omega}(u) : u\in BV(\Omega)\}.
\]
Let $\Omega_a\coloneqq\Omega-a^*/2$, $u\in BV(\Omega)$ and $u_a \colon \Omega \to \R$ be given by $u_a(z)\coloneqq u(z+a^*/2)-L(z)$. Then $u_a \in BV(\Omega_a)$. Hence we get, using step 2,
\[
\begin{aligned}
\mathcal G_{L,\Omega}(u)&=\int_\Omega g(\nabla u+\X^*)\ud\LL^{2n}+\int_\Omega g^\infty\left(\frac{dD^su}{d|D^su|}\right)\,\ud|D^su|+\int_{\partial\Omega}g^\infty((L-u_{|\partial \Omega})\nu_\Omega)\,\ud\mathcal H^{2n-1}\\
&=\int_{\Omega_a}g(\nabla u_a+\X^*)\ud\LL^{2n}+\int_{\Omega_a} g^\infty\left(\frac{dD^su_a}{d|D^su_a|}\right)\,\ud|D^su_a|+\int_{\partial\Omega_a}g^\infty(-(u_a)_{|\partial \Omega}\nu_\Omega)\,\ud\mathcal H^{2n-1}\\
& =\mathcal G_{0,\Omega_a}(u_a)\ge \mathcal G_{0,\Omega_a}(0)\\
&=\int_{\Omega_a}g(\X^*)\ud\LL^{2n}=\int_\Omega g(a+\X^*)\ud\LL^{2n}\\
&=\G_{L,\Omega}(L)
\end{aligned}
\]
which says that $u=L$ is a minimizer. Uniqueness easily follows by the fact that the equality $\mathcal G_{L,\Omega}(u)=\mathcal G_{L,\Omega}(0)$  implies, using the previous estimate, $\mathcal G_{0,\Omega_a}(u_a)=\mathcal G_{0,\Omega_a}(0)$ which in turn yields $u_a=0$ from step 4. In order to conclude the proof it is sufficient to observe that $u_a=0$ means $u=L$.
\end{proof}

\begin{corollary}\label{cpaffine}
Let $\Omega\subset\bbR^{2n}$ be a bounded open set with Lipschitz boundary, $\varphi\in L^1(\partial\Omega)$ and $L\colon\bbR^{2n}\to\bbR$ be an affine function, i.e., $L(z)=\langle a,z\rangle+b$ for some $a\in\R^{2n},b\in\bbR$.
\begin{enumerate}
\item Assume that $\varphi\leq L$\, $\HH^{2n-1}$-a.e. on $\partial\Omega$. Then, for any $u\in \cc_{\varphi}$, we have $u\le L$ \,$\LL^{2n}$-a.e.\ in $\Omega$.
\item Assume that that $\varphi\geq L$ \,$\HH^{2n-1}$-a.e. on $\partial\Omega$. Then, for any $u\in \cc_{\varphi}$, we have $u\ge L$ \,$\LL^{2n}$-a.e.\ in $\Omega$.
\end{enumerate}
\end{corollary}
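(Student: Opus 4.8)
The plan is to obtain both statements as immediate consequences of two results already established: the uniqueness of the minimizer for affine boundary data (Proposition \ref{propmin2}) and the one-sided comparison principle (Theorem \ref{confro}). The crucial observation is that, writing $L$ also for its trace $L_{|\partial\Omega}$, which lies in $L^1(\partial\Omega)$ since $\Omega$ is bounded and $L$ is affine, Proposition \ref{propmin2} asserts that $L$ is the \emph{unique} minimizer of $\mathcal G_{L,\Omega}$. In other words $\cc_{L}=\{L\}$, so the maximal and minimal minimizers produced by Proposition \ref{esistmax} both collapse onto $L$; I will use $\overline w=\underline w=L$ freely below.

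For part (1), I would argue as follows. Since $\varphi\le L$ $\HH^{2n-1}$-a.e.\ on $\partial\Omega$, I apply Theorem \ref{confro} with the smaller datum $\varphi$ and the larger datum $\psi:=L$. Let $\overline w\in\cc_{L}$ be the maximal minimizer; by the previous paragraph $\overline w=L$. The ``in particular'' conclusion of Theorem \ref{confro} then gives $u\le\overline w=L$ $\LL^{2n}$-a.e.\ in $\Omega$ for every $u\in\cc_\varphi$, which is precisely the assertion.

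Part (2) is symmetric. Now $L\le\varphi$ $\HH^{2n-1}$-a.e.\ on $\partial\Omega$, so I apply Theorem \ref{confro} with the smaller datum equal to $L$ and the larger one equal to $\varphi$. Let $\underline u\in\cc_{L}$ be the minimal minimizer; again $\underline u=L$. The corresponding ``in particular'' statement of Theorem \ref{confro} yields $L=\underline u\le w$ $\LL^{2n}$-a.e.\ in $\Omega$ for every $w\in\cc_\varphi$, as desired.

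The entire argument hinges on identifying $\cc_{L}$ with the singleton $\{L\}$; this is exactly where the structural hypotheses (A)--(B) on $g$ enter, through Proposition \ref{propmin2}. I therefore expect no genuine obstacle here: once the affine-datum uniqueness is available, the comparison principle does all the remaining work, and the only thing to be careful about is the harmless convention of treating the affine map $L$ and its boundary trace $L_{|\partial\Omega}$ as the same object when invoking the two cited results.
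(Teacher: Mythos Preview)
Your argument is correct and is exactly the approach taken in the paper: both parts follow immediately from Theorem~\ref{confro} once one observes, via Proposition~\ref{propmin2}, that $\cc_L=\{L\}$ and hence $\overline L=\underline L=L$. You have simply spelled out in more detail what the paper states in a single sentence.
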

\begin{proof}
Both claims follow immediately from Theorem \ref{confro} when we
observe that the set $\cc_L$ consists of just one element
that is $L$ itself, so that, following
the notations of Proposition \ref{esistmax}, $L=\overline L=\underline L$.
\end{proof}

\section{The Bounded Slope Condition}\label{ultSec}
We recall the well-known definition of a boundary datum satisfying
the Bounded Slope Condition (see \cite{HartStamp}). We also refer to \cite{G} for some classical results.
\begin{definition}\label{BSC}
{ We say that a function $\varphi\colon\partial\Omega\to\bbR$ satisfies the bounded slope condition with constant $Q>0$ ($Q$-B.S.C.
for short or simply B.S.C.\ when the constant $Q$ does not play any role) if for every $z_0\in\partial\Omega$, there exist two affine functions $w^+_{z_0}$ and $w^-_{z_0}$ such that
\begin{align}
\label{compar}
&w^-_{z_0}(z)\leq \varphi(z)\leq w^+_{z_0}(z) \quad \forall z\in\partial\Omega,\\
\label{eqpunto}
&w^-_{z_0}(z_0)=\varphi(z_0)=w^+_{z_0}(z_0)\\
\label{stimali} &\lip(w^-_{z_0})\leq Q\quad \mbox{and}\quad
\lip(w^+_{z_0})\leq Q,
\end{align}
where $\lip(w)$ denotes the Lipschitz constant of $w$.}
%We address the reader to \cite{PSTV} for a complete discussion on the Bounded Slope condition.
%We recall that a set $\Omega\subset \bbR^{2n}$ is said to be uniformly convex if there exists a positive constant $C=C(\Omega)$
%and, for each $z_0\in\partial\Omega$, an hyperplane $\Pi_{z_0}$
%passing through $z_0$ such that $$|z-z_0|^2\leq C\,  dist( z,
%\Pi_{z_0})\quad \forall z\in\partial\Omega,$$ where $dist(z, \Omega):=\inf\{|z-w|\ |\ w\in
%\Omega\}$. It is worth noticing
%that, if $\partial\Omega$ is of class $C^2$, this condition holds if
%and only if all principal curvatures of $\partial\Omega$ are
%strictly positive, see \cite{Giusti2} for details.

%\begin{remark}\label{propBSC}
%{\rm We collect here some facts on the B.S.C.
%\begin{enumerate}
%\item[a)] If $\varphi:\partial\Omega\to\bbR$ satisfies the
%B.S.C. and is not affine, then $\Omega$ has to be convex (see \cite{Giusti2}). Moreover, if
%$\partial\Omega$ has flat faces, then $\varphi$  has to be affine on them.
%\end{enumerate}
%This property seems to say that the B.S.C. is a quite restrictive
%assumption. Anyhow the following one, due to \cite{Miranda}, shows
%that the class of functions satisfying the B.S.C. on a uniformly
%convex set is quite large.
%\begin{enumerate}
%\item[b)] Let $\Omega\subset\bbR^n$ be open, bounded and uniformly convex; then
%\cite[Theorem 1.1]{Giusti2} every $\varphi\in C^{1,1}(\bbR^n)$
%satisfies the B.S.C. on $\partial\Omega$.
%\end{enumerate}
%}\end{remark}
Moreover, we denote by $f_1$ and $f_2$ the functions defined, respectively, by
$f_1(z)\coloneqq\sup_{z_0\in\partial\Omega} w^-_{z_0}(z)$ and
$f_2(z)\coloneqq\inf_{z_0\in\partial\Omega} w^+_{z_0}(z)$. We underline that $f_1$
is convex, $f_2$ is concave and they are both
Lipschitz continuous with Lipschitz constant not greater than $Q$.
\end{definition}

The following result can be proved exactly as in \cite[Lemma 6.2]{PSTV}.
\begin{lemma}\label{datobordo}
Let $\Omega\subset\bbR^{2n}$ be an open bounded set with Lipschitz regular boundary; assume that $\varphi\in L^1(\partial\Omega)$ satisfies the $Q$-B.S.C. Then, if $u\in BV(\Omega)$ is a minimizer of $\mathcal{G}_{\varphi,\Omega}$, the following facts hold.
\begin{enumerate}
\item  $u_{|\partial\Omega}=\varphi$;
\item $f_1\le u\le f_2$ $\LL^{2n}$-a.e. in $\Omega$;
\item $u$ is also a minimizer of  $\area_{\Omega}$  in $\BV(\Omega)$.
\end{enumerate}
\end{lemma}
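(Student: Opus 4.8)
The plan is to follow the scheme of \cite[Lemma 6.2]{PSTV}, exploiting the comparison with affine functions encoded in Corollary \ref{cpaffine}. I begin by recording two structural facts about the envelopes $f_1,f_2$ of Definition \ref{BSC}. For every $z_0\in\partial\Omega$ one has $w^-_{z_1}(z_0)\le\varphi(z_0)$ for all $z_1\in\partial\Omega$, together with $w^-_{z_0}(z_0)=\varphi(z_0)$, so that $f_1(z_0)=\varphi(z_0)$; symmetrically $f_2(z_0)=\varphi(z_0)$. Hence $f_1=\varphi=f_2$ on $\partial\Omega$, and since $f_1,f_2$ are Lipschitz on $\R^{2n}$ with constant at most $Q$, the datum $\varphi$ is the restriction of a Lipschitz function and the traces of $f_1,f_2$ on $\partial\Omega$ both coincide with $\varphi$. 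I would prove the three assertions in the order (2), (1), (3).

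Next I would prove (2). Fixing $z_0\in\partial\Omega$, from $w^-_{z_0}\le\varphi$ on $\partial\Omega$ Corollary \ref{cpaffine}(2) applied with $L=w^-_{z_0}$ gives $u\ge w^-_{z_0}$ $\LL^{2n}$-a.e.\ in $\Omega$; likewise $w^+_{z_0}\ge\varphi$ yields, via Corollary \ref{cpaffine}(1), $u\le w^+_{z_0}$ a.e. The only genuine difficulty is to upgrade these individual bounds to $f_1\le u\le f_2$, because a priori the exceptional null set depends on $z_0$ and the family is uncountable. I would resolve this by a separability argument: $\epi{f_1}=\bigcap_{z_0}\{(z,t):t\ge w^-_{z_0}(z)\}$ is an intersection of closed half-spaces of $\R^{2n+1}$, and since its open complement is Lindel\"of it already equals the intersection of a countable subfamily; thus there is a sequence $(z_k)\subset\partial\Omega$ with $f_1=\sup_k w^-_{z_k}$ pointwise. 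As $u\ge w^-_{z_k}$ a.e.\ for every $k$ and a countable union of null sets is null, we get $u\ge f_1$ a.e.; the symmetric argument for the concave envelope $f_2$ gives $u\le f_2$ a.e., which is (2).

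Assertion (1) then follows from (2) by monotonicity of the trace operator. From $f_1\le u$ a.e.\ we have $\max\{f_1,u\}=u$, so by Remark \ref{traccesupinf} one gets $u_{|\partial\Omega}=\max\{f_{1|\partial\Omega},u_{|\partial\Omega}\}\ge f_{1|\partial\Omega}=\varphi$; analogously $u\le f_2$ gives $u_{|\partial\Omega}\le f_{2|\partial\Omega}=\varphi$. Hence $u_{|\partial\Omega}=\varphi$.

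Finally (3) is immediate from (1): since $g^\infty(0)=0$ by positive $1$-homogeneity, the boundary term in $\area_{\varphi,\Omega}(u)$ vanishes once $u_{|\partial\Omega}=\varphi$, so $\area_{\varphi,\Omega}(u)=\area_\Omega(u)$. For any competitor $v\in BV(\Omega)$ with $v_{|\partial\Omega}=\varphi$ the same computation gives $\area_\Omega(v)=\area_{\varphi,\Omega}(v)\ge\area_{\varphi,\Omega}(u)=\area_\Omega(u)$, the inequality being the minimality of $u$ for $\area_{\varphi,\Omega}$; thus $u$ minimizes $\area_\Omega$ among all $BV(\Omega)$ functions attaining the boundary datum $\varphi$, which is (3). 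I expect the main obstacle to be the measure-theoretic passage in (2) from the comparison inequalities valid for each individual $z_0$ to the envelope estimate, i.e.\ the elimination of the uncountable family of exceptional null sets; the remaining steps are routine given Corollary \ref{cpaffine} and the monotonicity of traces.
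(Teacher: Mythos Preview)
Your proposal is correct and follows essentially the same route the paper indicates, namely the scheme of \cite[Lemma 6.2]{PSTV}: one compares $u$ with each barrier $w^\pm_{z_0}$ via Corollary \ref{cpaffine}, passes to the envelopes $f_1,f_2$ by a countability reduction, reads off the trace identity by monotonicity of traces, and deduces (3) from the vanishing of the boundary penalty once $u_{|\partial\Omega}=\varphi$. Your Lindel\"of argument for reducing to countably many $z_0$ is a clean way to handle the one nontrivial measure-theoretic point; an equally valid alternative is to take a countable dense subset of $\partial\Omega$ and use the Lipschitz continuity of $f_1,f_2$.
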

%\begin{proof}
%$i)$ For every $z_0\in\partial\Omega$, let $w_{z_0}^+$ and
%$w_{z_0}^{-}$ be as in Definition \ref{BSC}. By Corollary
%\ref{cpaffine}, we have that $w_{z_0}^-\le u\le w_{z_0}^+$
%$\LL^{2n}$-a.e. in $\Omega$. Recalling \eqref{stimali} we obtain
%\begin{align}\label{gho2}
 %|u(z)-\varphi(z_0)|\leq Q |z-z_0|\quad \LL^{2n}\mbox{-a.e.}\ z\in\Omega, \forall z_0\in\partial\Omega.
%\end{align}
%Therefore,
%\begin{align}\label{mediatr}
%\frac{1}{\rho^{2n}}\int_{\Omega\cap B(z_0,\rho)}|u-\varphi(z_0)|\ud\LL^{2n}\leq \frac{Q}{\rho^{2n}}\int_{\Omega\cap B(z_0,\rho)}|z-z_0|\ud\LL^{2n}(z)\leq Q\rho
%\end{align}
%and letting $\rho\to 0^+$ in (\ref{mediatr}) we conclude that
%$u_{|\partial\Omega}=\varphi$.

%$ii)$ Fix  a Lebsgue point $\bar z\in\Omega$ of $u$.
%Since $f$ is a convex function, there exists $\xi\in\bbR^{2n}$ such that $f(z)\ge
%f(\bar z)+\xi\cdot(z-\bar z):=h(z)$ for every $z\in\overline\Omega$. The
%function $h$ is affine and $h\le\varphi$ on $\partial\Omega$; then
%Corollary \ref{cpaffine} implies that $u\ge h$ $\LL^{2n}$-a.e. in
%$\Omega$. Considering the mean integral on a ball centered at $\bar z$
%we obtain
%\[
%\fint_{B(\bar z,\rho)} u(z) d\LL^{2n}\ge  \fint_{B(\bar z,\rho)} h(z) d\LL^{2n}
%\]
%and, passing to the limit as $\rho\to0^+$, we get $u(\bar z)\ge
%f(\bar z)$. One can argue in a similar way to prove that $u\le g$
%$\LL^{2n}$-a.e. in $\Omega$.

%Finally, the proof of $iii)$ is straightforward.
%\end{proof}
The following fact is inspired by \cite[Remark 6.4]{PSTV}.
\begin{remark}\label{sottodomini}{\rm
If $\Omega'\subset\Omega$ are open bounded domains with Lipschitz regular boundary and $u\in \BV(\Omega)$.

Write $\Gamma\coloneqq\partial\Omega'\cap\Omega$ and $\partial\Omega=\Delta_1\cup\Delta_2$, where
\[
\Delta_1\coloneqq \partial\Omega\cap\partial\Omega'\quad\text{and}\quad \Delta_2\coloneqq \partial\Omega\setminus\partial\Omega'\,.
\]
Notice that $\partial\Omega'=\Gamma\cup\Delta_1$. We also denote by $u_i,u_o\colon\Gamma\to\R$ the ``inner'' and ``outer'' (with respect to $\Omega'$) traces of $u$ on $\Gamma$, i.e.,
\[
u_i\coloneqq(u_{|\partial\Omega'})\res\Gamma\quad\text{and}\quad u_o\coloneqq(u_{|\partial(\Omega\setminus\overline{\Omega'})})\res\Gamma\,.
\]
We use the notation $\area_{u,\Omega'}$ to denote the functional $\area_{u_o,\Omega'}$. Let us prove that, if $u$ is a minimizer of $\area_{\varphi,\Omega}$ with $\varphi=u_{|\partial\Omega}$, then $u$ is also a minimizer of $\area_{u,\Omega'}$.
Assume by contradiction that $u$ is not a minimizer of $\area_{u,\Omega'}$; then, there exists $v\in \BV(\Omega')$ such that
\begin{equation}\label{ristorantebrasiliano}
\begin{split}
0 & < \area_{u,\Omega'}(u)-\area_{u,\Omega'}(v)\\
&= \area_{\Omega'}(u)-\area_{\Omega'}(v)+\int_\Gamma g^\infty((u_o-u_i)\nu_{\Omega'})\ud\HH^{2n-1}\\
& \hphantom{=\area_{\Omega'}(u)-\area_{\Omega'}(v)}-\int_\Gamma g^\infty((u_o-v_{|\partial\Omega'})\nu_{\Omega'})\ud\HH^{2n-1}-\int_{\Delta_1} g^\infty((\varphi-v_{|\partial\Omega'})\nu_\Omega)\ud\HH^{2n-1}\\
%& \leq \area_{\Omega'}(u) - \area_{\Omega'}(v) + \int_{\partial\Omega'}g^{\infty}((v_{|\partial\Omega'}-u_{|\partial\Omega'})\nu_{\Omega'})\ud\HH^{2n-1}\\
%& = \area_{\Omega'}(u) - \area_{\Omega'}(v)+ \int_{\Gamma}g^{\infty}((v_{|\partial\Omega'}-u_i)\nu_{\Omega'})\ud\HH^{2n-1}+ \int_{\Delta_1}g^{\infty}((v_{|\partial\Omega'}-\varphi)\nu_{\Omega})\ud\HH^{2n-1}\,,
\end{split}
\end{equation}
where we used inequality \eqref{sublin}. We would reach a  contradiction if we show that the function $w\in\BV(\Omega)$ defined by
\[
w\coloneqq v\text{ on }\Omega',\quad w\coloneqq u\text{ on }\Omega\setminus\Omega'
\]
satisfies $\area_{\varphi,\Omega}(u)- \area_{\varphi,\Omega}(w)>0$.

Let us compute
\[
\begin{split}
\area_{\varphi,\Omega}(u) & = \area_{\Omega}(u) =  \area_{\Omega'}(u) + \area_{\Omega\setminus\overline{\Omega'}}(u) + \int_{\Gamma} g^{\infty}\left(\frac{dD^s u}{d|D^s u|}\right) \ud|D^s u|\\
& =   \area_{\Omega'}(u) + \area_{\Omega\setminus\overline{\Omega'}}(u) + \int_\Gamma g^{\infty}((u_o-u_i)\nu_{\Gamma})\ud\HH^{2n-1}
\end{split}
\]
and
\[
\begin{split}
\area_{\varphi,\Omega}(w) &=   \area_{\Omega'}(v) + \area_{\Omega\setminus\overline{\Omega'}}(u) + \int_{\Gamma} g^{\infty}\left(\frac{dD^s w}{d|D^s w|}\right) \ud|D^s w|+ \int_{\partial\Omega} g^{\infty}((\varphi-w_{|\partial\Omega})\nu_{\Omega})\ud\HH^{2n-1}\\
&=   \area_{\Omega'}(v) + \area_{\Omega\setminus\overline{\Omega'}}(u) + \int_\Gamma g^{\infty}(u_o-v_{|\partial\Omega'})\nu_{\Omega'})\ud\HH^{2n-1} + \int_{\Delta_1} g^{\infty}(\varphi-v_{|\partial\Omega})\nu_{\Omega})\ud\HH^{2n-1}\,.
\end{split}
\]
%where we have used the fact that $u_{|\partial\Omega}=\varphi$ on $\partial\Omega=\Delta_1\cup\Delta_2$.
Therefore
\[
\begin{split}
 \area_{\varphi,\Omega}(u) &-\area_{\varphi,\Omega}(w)\\
= \,& \area_{\Omega'}(u)-  \area_{\Omega'}(v) + \int_\Gamma \Big(g^{\infty}((u_o-u_i)\nu_{\Omega'}) - g^{\infty}(u_o-v_{|\partial\Omega'})\nu_{\Omega'})\Big)\ud\HH^{2n-1} \\
&\qquad - \int_{\Delta_1} g^{\infty}((\varphi-v_{|\partial\Omega'})\nu_{\Omega})\ud\HH^{2n-1} >0,
\end{split}
\]
where we used \eqref{ristorantebrasiliano} and $u_{|\partial\Omega'}=u_i$.
}\end{remark}

We are now in position to prove our main result, whose proof is actually very similar to the one given in \cite{PSTV}.
%\begin{teo}\label{Lip}
%Let $\Omega\subset\bbR^{2n}$ be open, bounded and with Lipschitz
%boundary, $\varphi\in L^1(\partial\Omega)$ satisfying the Q-B.S.C.
%for some $Q>0$. Then the minimization problem
%\begin{align}
%\inf\big\{\area(u)\ |\
%u\in \BV(\Omega),\ u_{|\partial\Omega}=\varphi\big\}
%\end{align}
%admits a unique Lipschitz continuous solution $\hat{u}$ and $\lip(\hat{u})\leq Q+4\sup_{z\in\Omega}|z|$.
%\end{teo}

\begin{theorem}\label{mainteo-ex6.7}
Let $\Omega\subset\bbR^{2n}$ be open, bounded and with Lipschitz regular boundary, let $\varphi\colon\partial\Omega\to\R$ satisfy the $Q$-B.S.C. for some $Q>0$ and let $g\colon\mathbb{R}^{2n}\to\mathbb{R}$ be a convex function with linear growth satisfying conditions $(A)$ and $(B)$.  Then, the minimization problem
\begin{equation}\label{problema}
\min\left\{\mathcal{G}_{\Omega}:
u\in \BV(\Omega),\ u_{|\partial\Omega}=\varphi\right\}
\end{equation}
admits a unique solution $\hat{u}$. Moreover, $\hat u$ is Lipschitz continuous and $\lip(\hat{u})\leq \overline Q=\overline Q(Q,\Omega)$.
\end{theorem}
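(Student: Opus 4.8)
The plan is to follow the classical Hilbert--Haar strategy, exactly as in \cite{PSTV}, reducing everything to the two extremal minimizers produced in Proposition \ref{esistmax}. Since the existence of minimizers has already been established, $\cc_\varphi\neq\emptyset$, and by Lemma \ref{datobordo}(1) every $u\in\cc_\varphi$ attains the datum, $u_{|\partial\Omega}=\varphi$; hence any element of $\cc_\varphi$ solves \eqref{problema}. I would then show that the maximal and minimal minimizers $\overline u,\underline u\in\cc_\varphi$ of Proposition \ref{esistmax} admit Lipschitz representatives with a common constant $\overline Q=\overline Q(Q,\Omega)$, and finally that they coincide; since $\underline u\le u\le\overline u$ for every $u\in\cc_\varphi$, this yields at once uniqueness and the Lipschitz regularity of the unique solution $\hat u$.

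First I would prove a \emph{boundary} Lipschitz estimate. Fix $z_0\in\partial\Omega$ and let $w^-_{z_0},w^+_{z_0}$ be the affine functions furnished by the $Q$-B.S.C. From $w^-_{z_0}\le\varphi\le w^+_{z_0}$ on $\partial\Omega$, and since affine functions are the unique minimizers of their own Dirichlet problem (Proposition \ref{propmin2}, where assumptions (A) and (B) enter), Corollary \ref{cpaffine} gives
\[
w^-_{z_0}(z)\le u(z)\le w^+_{z_0}(z)\qquad\text{for $\LL^{2n}$-a.e. }z\in\Omega,\ \forall u\in\cc_\varphi.
\]
Using $w^\pm_{z_0}(z_0)=\varphi(z_0)$ and $\lip(w^\pm_{z_0})\le Q$, this yields $|u(z)-\varphi(z_0)|\le Q|z-z_0|$ for a.e.\ $z\in\Omega$ and every $z_0\in\partial\Omega$.

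Next I would upgrade this to an \emph{interior} estimate by means of the Heisenberg translations $\atx{(\cdot)}$ and the comparison principles. Fix $\tau\in\bbR^{2n}$ and take $\xi=0$; by Corollary \ref{tildemax} the translate $\ato{(\overline u)}$ is an extremal minimizer of $\area_{\ato\varphi,\Omega_\tau}$, and by \eqref{trennno} its trace on $\partial\Omega_\tau$ equals $\ato\varphi$. Restricting $\overline u$ and $\ato{(\overline u)}$ to the overlap $\Omega\cap\Omega_\tau$---both remain minimizers there by Remark \ref{sottodomini}---the boundary estimate controls their difference on $\partial(\Omega\cap\Omega_\tau)\subseteq\partial\Omega\cup\partial\Omega_\tau$: on the portion in $\partial\Omega$ one compares $\overline u(z+\tau)$ with $\varphi(z)$, and on the portion in $\partial\Omega_\tau$ one compares $\overline u(z)$ with $\varphi(z+\tau)$, in both cases obtaining a bound $K\le(Q+2\dia(\Omega))|\tau|$ after accounting for the shear term $2\langle\tau^*,z\rangle$. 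The comparison principle (Theorem \ref{confro}, in the $L^\infty$ form of Corollary \ref{valbordo}) then propagates this bound inward, so that $|\overline u(z)-\ato{(\overline u)}(z)|\le K$ on $\Omega\cap\Omega_\tau$. Since $\ato{(\overline u)}(z)=\overline u(z+\tau)+2\langle\tau^*,z\rangle$, absorbing $|2\langle\tau^*,z\rangle|\le 2\dia(\Omega)|\tau|$ gives $|\overline u(z+\tau)-\overline u(z)|\le\overline Q|\tau|$ with $\overline Q=Q+4\dia(\Omega)$, for all admissible $z,\tau$; that is, $\overline u$ (and likewise $\underline u$) has a Lipschitz representative with $\lip(\overline u)\le\overline Q$.

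Finally, for uniqueness the Lipschitz functions $\overline u,\underline u$ lie in $W^{1,\infty}(\Omega)\subset W^{1,2}(\Omega)$, share the trace $\varphi$, and, attaining the datum, both minimize $\G_\Omega$ over $\varphi+W^{1,2}_0(\Omega)$; hence Proposition \ref{uniq-special} (where assumption (A) is used) forces $\overline u=\underline u=:\hat u$, the unique solution of \eqref{problema}, with $\lip(\hat u)\le\overline Q$. I expect the main obstacle to be the interior step: one must carefully justify that the restrictions of $\overline u$ and $\ato{(\overline u)}$ to $\Omega\cap\Omega_\tau$ are the \emph{extremal} minimizers to which Theorem \ref{confro} and Corollary \ref{valbordo} apply, while correctly identifying the traces on the two pieces of $\partial(\Omega\cap\Omega_\tau)$ and tracking the contribution of the shear term $2\langle\tau^*,z\rangle$, which is precisely the feature distinguishing the present Heisenberg setting from the classical Euclidean one.
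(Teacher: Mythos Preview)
Your proposal is correct and follows the same strategy as the paper: extremal minimizers from Proposition \ref{esistmax}, the boundary Lipschitz estimate via affine comparison (Corollary \ref{cpaffine}), the interior estimate via the translates $\ato{(\overline u)}$ on $\Omega\cap\Omega_\tau$ combined with Remark \ref{sottodomini} and Corollary \ref{valbordo}, and uniqueness through Proposition \ref{uniq-special}. One small correction on the constant: the shear term obeys $|2\langle\tau^*,z\rangle|\le 2|\tau|\sup_{w\in\Omega}|w|$, not $2|\tau|\,\dia(\Omega)$, so the paper obtains $\overline Q=Q+4\sup_{w\in\Omega}|w|$; this is immaterial to the statement, which only claims $\overline Q=\overline Q(Q,\Omega)$.
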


\begin{proof} We divide the proof into several steps.

{\em Step 1.} We denote by $\overline u$ the (pointwise a.e.) maximum of the minimizers of $\area_{\varphi,\Omega}$ in $\BV$ (see Proposition \ref{esistmax}). Lemma \ref{datobordo} implies that $f_1\le \overline u\le f_2$ $\LL^{2n}$-a.e.\ in $\Omega$ and $\overline u=\varphi=f_1=f_2$ on $\partial\Omega$, where $f_1$ and $f_2$ are defined as in Definition \ref{BSC}; in particular, $\overline u$ is also a minimizer for \eqref{problema}.

Let $\tau\in\R^{2n}$ be such that $\Omega\cap\Omega_\tau\neq\emptyset$; following the notations introduced before Lemma \ref{lemutile}, we consider the function $\overline u_{\tau,0}^*$, which we denote by $\overline u_\tau^*$ to simplify the notation. Consider the set $\Omega\cap\Omega_\tau$. By Remark \ref{sottodomini}, $\overline u$ is a minimizer of $\area_{\overline u,\Omega\cap\Omega_\tau}$ and, by Corollary \ref{tildemax} and Remark \ref{sottodomini}, $\ut$ is a minimizer of $\area_{\ut,\Omega\cap\Omega_\tau}$. Let $z\in\partial(\Omega\cap\Omega_\tau)$, then either $z\in\partial\Omega$ or $z\in\partial\Omega_\tau$.

\noindent If $z\in \partial \Omega$, then $z+\tau\in\overline\Omega$ and the inequality $(36)$ in \cite[Lemma $6.3$ ]{PSTV} implies that
\begin{equation}\label{disbordo}
\overline u(z)-Q|\tau|\le \overline u(z+\tau)\le \overline u(z)+Q|\tau|\,.
\end{equation}
Otherwise, $z\in\partial\Omega_\tau$ and $z=(z+\tau)-\tau\in\overline\Omega$, and Lemma \ref{datobordo} implies again \eqref{disbordo}.

So we have proved that \eqref{disbordo} holds for any $z\in\partial (\Omega\cap\Omega_\tau)$, hence
\[
\overline u(z)-Q|\tau|+2\langle\tau^*,z\rangle\le \overline u(z+\tau)+2\langle\tau^*,z\rangle\le \overline u(z)+Q|\tau|+2\langle\tau^*,z\rangle\,.
\]
Setting $M\coloneqq Q+2\sup_{z\in\Omega}|z|$, one has
\[
\overline u(z)-M|\tau|\le  \ut(z)\le \overline u(z)+M|\tau|\quad \text{for any }z\in\partial(\Omega\cap\Omega_\tau)
\]
and, by Corollary \ref{valbordo},
\[
\overline u(z)-M|\tau|\le  \ut(z)\le \overline u(z)+M|\tau|\quad \text{for $\LL^{2n}$-a.e. }z\in\Omega\cap\Omega_\tau\,.
\]
This is equivalent to
\[
\overline u(z)-M|\tau|-2\langle \tau^*,z\rangle\le  \overline u(z+\tau)\le \overline u(z)+M|\tau|-2\langle \tau^*,z\rangle\quad \text{for $\LL^{2n}$-a.e. }z\in\Omega\cap\Omega_\tau
\]
and, setting $K\coloneqq M+2\sup_{z\in\Omega}|z|$,
\[
\overline u(z)-K|\tau|\le  \overline u(z+\tau)\le \overline u(z)+K|\tau|\quad \text{for $\LL^{2n}$-a.e. }z\in\Omega\cap\Omega_\tau.
\]

{\em Step 2.} We claim that the inequality $|\overline u(z)-\overline u(\bar z)|\le K|z-\bar z|$ holds for any Lebesgue points $z,\bar z$ of $\overline u$. We define $\tau\coloneqq\bar z-z$; then $\Omega\cap\Omega_\tau\neq\emptyset$ and, arguing as in Step 1, we obtain
\[
|\overline u(z'+\tau)-\overline u(z')|\le K|\tau| \quad\text{for $\LL^{2n}$-a.e. }z'\in\Omega\cap\Omega_\tau.
\]
Let $\rho>0$ be such that $B(z,\rho)\subset\Omega\cap\Omega_\tau$ and $B(\bar z,\rho)\subset\Omega\cap\Omega_\tau$; then
\[
\begin{split}
|\overline u(z)-\overline u(\bar z)|=&\left|\lim_{\rho\to 0}\left(\fint_{B(z,\rho)}\overline u(z') dz'-\fint_{B(\bar z,\rho)}\overline u(z') dz'\right)\right|\\
\le&\lim_{\rho\to 0}\fint_{B(z,\rho)}\left| \overline u(z') - \overline u(z'+\tau) \right|dz'\le K|z-\bar z|.
\end{split}
\]

{\em Step 3.} We have proved that $\overline u$, the  maximum of the minimizer of $\area_{\varphi,\Omega}$, has a representative that is Lipschitz continuous on $\Omega$, with Lipschitz constant not greater than $K= Q+4\sup_{z\in\Omega}|z|$. The same argument leads to prove that $\underline u$, the minimum of the minimizers of $\area_{\varphi,\Omega}$, has a representative that is Lipschitz continuous on $\Omega$, with Lipschitz constant not greater than $K$. The uniqueness criterion in Proposition \ref{uniq-special} (with $p=1$) implies that $\overline u=\underline u$ $\LL^{2n}$-a.e.\ on $\Omega$. If $u$ is another minimizer of $\area_{\varphi,\Omega}$, we have by Proposition \ref{esistmax} that $\underline u\le u\le\overline u$  $\LL^{2n}$-a.e.\ on $\Omega$. This concludes the proof.
\end{proof}

\section{The superlinear growth case}\label{lastSec}

In this section we consider the functional defined in (\ref{sobolev}) by
\begin{equation}\label{gsobolev}
\G_{\Omega}(u)\coloneqq\int_\Omega g(\nabla u+\X^*)\ud\LL^{2n},\qquad u\in\varphi+ W_0^{1,1}(\Omega)
\end{equation}
where $\varphi$ satisfies, as in the previous sections, the Bounded Slope Condition of order $Q$ and $g$ has superlinear growth.

%In \cite{G} it is proved the following theorem
%\begin{theorem}\label{giusti}
%  Let $f:\R^n\rightarrow\R$ be a convex function and let $\varphi:\Omega\rightarrow\R$ satisfy the Bounded Slope Condition of order Q on the boundary of $\Omega$. Then the functional
%\begin{equation}\label{fgradiente}
%  \mathcal{F}(u)=\int_{\Omega}f(\nabla u)\,dx\qquad u\in\varphi+W_0^{1,\infty}(\Omega)
%\end{equation}
%has a minimum, i.e.: there exists $u_0\in\varphi+W_0^{1,\infty}(\Omega)$ such that $\mathcal{F}(u_0)\le\mathcal{F}(u)$ for every $u\in\varphi+W_0^{1,\infty}(\Omega)$.
%\end{theorem}
%The functional $\mathcal{F}$ does not exhibit the Laventiev phenomenon (see for example \cite{BuBe} and for more general autonomous functionals \cite{BMT1} and \cite{BMT2}). This means that
%\[
%\inf_{u\in\varphi+W_0^{1,1}(\Omega)}\mathcal{F}(u)=
%\inf_{u\in\varphi+W_0^{1,\infty}(\Omega)}\mathcal{F}(u)
%\]
%and Theorem \ref{giusti} implies that $\mathcal{F}(u_0)\le\mathcal{F}(u)$ for every $u\in\varphi+W_0^{1,1}(\Omega)$ giving at the same time an existence result in Sobolev spaces and a regularity result.

Our aim is to show that, for the functional $\mathscr{G}_{\Omega}$ defined in \eqref{gsobolev}, we can get both regularity and uniqueness results using again the Bounded Slope Condition and arguing with the same approach that we used for the BV case.

\begin{theorem}\label{mainsobolev}
Let $g\colon\R^{2n}\rightarrow\R$ be a convex function satisfying condition (A) and let $\varphi\colon\Omega\rightarrow\R$ satisfy the Bounded Slope Condition of order Q on the boundary of $\Omega$. Assume also that $g$ has superlinear growth, i.e., $g(\xi)\ge \psi(|\xi|)$ for a suitable $\psi\colon[0,+\infty)\rightarrow\R$ such that
\[
\lim_{t\to+\infty}\frac{\psi(t)}{t}=+\infty.
\]
Then the functional
\begin{equation}
  \mathscr{G}_\Omega(u)=\int_{\Omega}g(\nabla u+\X^*)\ud\LL^{2n},\qquad u\in\varphi+W_0^{1,1}(\Omega)
\end{equation}
has a unique Lipschitz minimizer, i.e.: there exists $u\in\varphi+W_0^{1,\infty}(\Omega)$ such that $\mathscr{G}_\Omega
(u)\le\mathscr{G}_\Omega(v)$ for every $v\in\varphi+W_0^{1,1}(\Omega)$.
\end{theorem}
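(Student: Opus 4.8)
The plan is to transcribe the scheme of Section \ref{ultSec}, taking advantage of the fact that in the Sobolev class there are no singular or jump parts: the submodularity inequality of Theorem \ref{stimaMIN} collapses to the equality \eqref{eq:ugualeSobolev}, the boundary datum is imposed directly by the constraint $u-\varphi\in W^{1,1}_0(\Omega)$, and consequently only assumption (A) is needed (assumption (B) was used in Proposition \ref{propmin2} precisely to control the singular and relaxed-boundary contributions that are now absent). The first step is existence, run by the direct method in $\varphi+W^{1,1}_0(\Omega)$. For a minimizing sequence $(u_h)$ the bound $\G_{\Omega}(u_h)\le C$ together with $g(\xi)\ge\psi(|\xi|)$ gives $\int_\Omega\psi(|\nabla u_h+\X^*|)\,\ud\LL^{2n}\le C$; since $\psi$ is superlinear, the De la Vall\'ee Poussin criterion yields equi-integrability of $(\nabla u_h+\X^*)$, hence of $(\nabla u_h)$, because $\X^*$ is bounded on the bounded set $\Omega$. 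By Dunford--Pettis $(\nabla u_h)$ is weakly precompact in $L^1$, and a Poincar\'e inequality for $u_h-\varphi\in W^{1,1}_0(\Omega)$ bounds $(u_h)$ in $W^{1,1}(\Omega)$; up to subsequences $u_h\to u$ in $L^1$ with $\nabla u_h\rightharpoonup\nabla u$ weakly in $L^1$ and $u\in\varphi+W^{1,1}_0(\Omega)$, while weak $L^1$-lower semicontinuity of the convex integrand $v\mapsto\int_\Omega g(v+\X^*)\,\ud\LL^{2n}$ makes $u$ a minimizer. This is exactly where superlinear growth is indispensable: it keeps the minimizer inside $W^{1,1}$ and prevents a singular part.

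Next I would reproduce the lattice and comparison machinery. The equi-integrability above shows the set $\cc_\varphi$ of minimizers of $\G_\Omega$ is bounded in $W^{1,1}$ and $L^1$-precompact, so, as in Proposition \ref{esistmax}, the pointwise maximal and minimal minimizers $\overline u,\underline u$ exist; since the submodularity is now the equality \eqref{eq:ugualeSobolev}, $\cc_\varphi$ is a lattice and the analogues of Proposition \ref{maxmin}, Theorem \ref{confro} and Corollary \ref{valbordo} hold verbatim. Moreover an affine function $L$ is the unique minimizer of $\G_{L,\Omega}$ over $L+W^{1,1}_0(\Omega)$: for $v=L+\phi$ with $\phi\in W^{1,1}_0(\Omega)$, convexity gives $\G_\Omega(v)\ge\G_\Omega(L)+\int_\Omega\langle p(\nabla L+\X^*),\nabla\phi\rangle\,\ud\LL^{2n}$, and since $\nabla L+\X^*$ differs from $\X^*$ only by the constant $\nabla L$, the very approximation argument behind \eqref{key0} yields $\dive\,p(\nabla L+\X^*)=0$, so the last integral vanishes because $\phi$ has zero trace; uniqueness then follows from Proposition \ref{uniq-special} with $p=1$. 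In particular the analogue of Corollary \ref{cpaffine} holds: if $\varphi\le L$ (resp.\ $\varphi\ge L$) on $\partial\Omega$, then every $u\in\cc_\varphi$ satisfies $u\le L$ (resp.\ $u\ge L$) in $\Omega$.

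Then I would establish the Sobolev analogue of Lemma \ref{lemutile}, namely $\G_{\Omega_\tau}(\atx u)=\G_\Omega(u)$ for $u\in W^{1,1}(\Omega)$, by the same change of variables (again with no singular terms), so that translated minimizers are minimizers for translated data and Corollary \ref{valbordo} controls $L^\infty$-differences by boundary differences. At this point the proof of Theorem \ref{mainteo-ex6.7} applies \emph{mutatis mutandis} to $\overline u$: the $Q$-B.S.C.\ barriers $w^\pm_{z_0}$ together with the affine comparison of the previous step give the boundary estimate $|\overline u(z)-\varphi(z_0)|\le Q|z-z_0|$, and comparing $\overline u$ with its corrected translate $\ut$ on $\Omega\cap\Omega_\tau$ gives $|\overline u(z+\tau)-\overline u(z)|\le K|\tau|$ with $K=K(Q,\Omega)$; passing to Lebesgue points as in Steps 2--3 of Theorem \ref{mainteo-ex6.7} shows $\overline u$ admits a $K$-Lipschitz representative, hence $\overline u\in\varphi+W^{1,\infty}_0(\Omega)$.

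Finally, having produced a minimizer $\overline u\in W^{1,\infty}(\Omega)\subset W^{1,p'}(\Omega)$ for every $p'$, I would invoke Proposition \ref{uniq-special} with $p=1$ (so $p'=\infty$): any competitor lies in $W^{1,1}(\Omega)$ and $\varphi$ may be taken Lipschitz, so every minimizer coincides with $\overline u$, giving at once uniqueness and the bound $\lip(\hat u)\le\overline Q(Q,\Omega)$. The step I expect to require the most care is existence: outside the power-growth setting of the classical Hilbert--Haar theorem one cannot argue by reflexivity, and the $W^{1,1}$-minimizer must be secured through equi-integrability (De la Vall\'ee Poussin and Dunford--Pettis); once this and the lattice structure are in place, the Lipschitz regularity and uniqueness are a faithful transcription of the $\BV$ arguments, which is why the proof is only sketched.
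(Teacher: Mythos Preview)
Your proposal is correct and follows essentially the same approach as the paper's own proof, which is deliberately brief: the paper states that superlinearity plus lower semicontinuity give existence, that the results of Sections~\ref{sec:linear}--\ref{ultSec} can be repeated in $\varphi+W_0^{1,1}(\Omega)$ after dropping the $D^s$ and boundary-jump terms, and then invokes Proposition~\ref{uniq-special} for uniqueness. You have fleshed out the existence step (via De~la~Vall\'ee~Poussin/Dunford--Pettis) and explained explicitly why assumption~(B) is not needed in the Sobolev setting, but these are expansions of the paper's sketch rather than a different route.
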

\begin{proof} The superlinearity of $g$ and the lower semicontinuity of $\mathscr{G}_\Omega$ imply the existence of $u_0\in\varphi+W_0^{1,1}(\Omega)$ such that $\mathscr{G}_\Omega
(u_0)\le\mathscr{G}_\Omega(u)$ for every $u\in\varphi+W_0^{1,1}(\Omega)$.

In the same spirit of the previous sections, we denote by $\mathscr{M}_{\varphi}=\{v\in \varphi+W_0^{1,1}(\Omega): \mathscr{G}_\Omega(v)\le \mathscr{G}_\Omega(u), \forall u\in \varphi+W_0^{1,1}(\Omega)\}$.  Thanks to the superlinearity of $g$ we can argue as in the proof of Proposition \ref{esistmax} to state that there exist two functions $\overline{u},\underline{u}\in \mathscr{M}_{\varphi}$ such that for every $u\in\mathscr{M}_{\varphi}$
\[
	\underline{u}(x)\le u(x)\le\overline{u}(x)\quad\text{for a.e.\ $x\in$ }\Omega.
\]
We remark that the results contained in sections \ref{sec:linear} and \ref{ultSec} can be restated replacing the space $BV(\Omega)$ with $\varphi+W_0^{1,1}(\Omega)$. All the proofs in fact can be repeated and simplified dropping both the terms where $D^s$ appears and those that take into account the jumps at the boundary. Hence we can conclude that $\overline{u}\in \varphi+W_0^{1,\infty}(\Omega)$, where $K=Q+2\max_\Omega|z|$. Proposition \ref{uniq-special} then leads to uniqueness of minimizers.
\end{proof}

\begin{remark}
  (1) We underline that regularity results are usually obtained under ellipticity and growth conditions on the Lagrangian. In the present paper, the Bounded Slope Condition allows us to prove Lipschitz regularity up to the boundary with the same flavor of results, sometimes called ``semiclassical'' for the case of functionals depending only on the gradient. First of all we recall a result that goes back to some special cases of Hilbert and Haar and can be found in \cite{G}. The theorem proved in \cite{G} states the existence of a minimizer when the class of competitor functions coincides with the class of Lipschitz functions. The fact that autonomous scalar functionals do not exhibit the Lavrentiev phenomenon (see \cite{BuBe} for some special cases and \cite{BMT1, BMT2} for more general results) implies also that the minimum in the space of Lipschitz functions is also a minimum in $\varphi+W_0^{1,1}(\Omega)$.
    In the present paper we used an approach inspired by \cite{CellinaBSC} where, again in the case of functionals depending only on the gradient and assuming the existence of a minimizer, the Lipschitz regularity follows thanks to the use of Comparison Principles and of the Bounded Slope Condition. As far as we know, the only papers where this result has been extended to functional depending also on the $x$-variable are \cite{PSTV} where the authors considered the area functional in the Heisenberg group and \cite{FiaschiTreu} where the Lagrangian has the form $f(\xi)+g(x,u)$.

  \noindent(2) A consequence of our result in the case of Sobolev spaces is that the assumptions of Theorem \ref{mainsobolev} guarantee the non occurrence of the Lavrentiev phenomenon. This result is classically obtained under suitable assumptions that control from above the growth of the functional. A recent result on a class of functional that includes those considered in this paper is \cite{MT1}.
\end{remark}

\end{document}